\newtheorem{lemma}{Lemma}[section]
\newtheorem{proposition}[lemma]{Proposition}
\newtheorem{corollary}[lemma]{Corollary}
\newtheorem{theorem}[lemma]{Theorem}
\newtheorem{example}[lemma]{Example}
\newtheorem{definition}[lemma]{Definition}
\newtheorem{remark}[lemma]{Remark}
\newtheorem*{Acknowledgement}{Acknowledgements}
\newtheorem{Theorem}{Theorem}
\newtheorem*{Remark}{Remark}
\newtheorem{assumption}[lemma]{Assumption}
\newcommand\cf{cf\@. }
\newcommand\pa{ \partial}
\newcommand\bbC{\mathbb C}
\newcommand\bbN{\mathbb N}
\newcommand\bbP{\mathbb P}
\newcommand\bbR{\mathbb R}
\newcommand\bbS{\mathbb S}
\newcommand\bbZ{\mathbb Z}
\renewcommand\Im{\operatorname{Im}}
\newcommand\CI{\mathcal{C}^{\infty}}
\newcommand\fc{\operatorname{fc}}
\newcommand\cA{\mathcal{A}}
\newcommand\cL{\mathcal{L}}
\newcommand\cV{\mathcal{V}}
\newcommand\cU{\mathcal{U}}
\newcommand\cI{\mathcal{I}}
\newcommand\pr{\operatorname{pr}}
\newcommand\Id{\operatorname{Id}}
\newcommand\WH{\operatorname{WH}}
\newcommand\cH{\mathcal{H}}
\newcommand\tM{\widetilde{M}}
\newcommand\hM{\widehat{M}}
\newcommand\cN{\mathcal{N}}
\newcommand\cM{\mathcal{M}}
\newcommand\SU{\operatorname{SU}}
\newcommand\IH{\operatorname{IH}}
\newcommand\Sp{\operatorname{Sp}}
\newcommand{\Hilb}{\operatorname{Hilb}}
\newcommand{\tHilb}{\widetilde{\Hilb}}
\newcommand\QAC{\operatorname{QAC}}
\newcommand\QCyl{\operatorname{QCyl}}
\newcommand\Qb{\operatorname{Qb}}
\newcommand\QFB{\operatorname{QFB}}
\newcommand\QFC{\operatorname{QFC}}
\newcommand\ALE{\operatorname{ALE}}
\newcommand\QALE{\operatorname{QALE}}
\newcommand\SL{\operatorname{SL}}
\newcommand\bd{\operatorname{b}}
\newcommand\cB{\mathcal{B}}
\newcommand\mf{\mathfrak}
\newcommand\hU{\widehat{\cU}}
\newcommand\hV{\widehat{\cV}}
\newcommand\cW{\mathcal{W}}
\newcommand\bbT{\mathbb{T}}
\newcommand\tcM{\widetilde{\cM}}
\newcommand\tcN{\widetilde{\cN}}
\newcommand\bcM{\overline{\cM}}
\newcommand\hcM{\widehat{\cM}}
\newcommand\hcN{\widehat{\cN}}
\newcommand\cP{\mathcal{P}}
\begin{document}
\title[$L^2$-cohomology of $\QFB$-metrics]
{$L^2$-cohomology of quasi-fibered boundary metrics}

\author{Chris Kottke}
\address{New College of Florida}
\email{ckottke@ncf.edu}

\author{Fr\'ed\'eric Rochon}
\address{Département de Mathématiques, Universit\'e du Qu\'ebec \`a Montr\'eal}
\email{rochon.frederic@uqam.ca}

\maketitle

\begin{abstract}
We develop new techniques to compute the weighted $L^2$-cohomology of quasi-fibered boundary metrics (QFB-metrics).  Combined with the decay of $L^2$-harmonic forms obtained in a companion paper, this allows us to compute the reduced $L^2$-cohomology for various classes of QFB-metrics.  Our results applies in particular to the Nakajima metric on the Hilbert scheme of $n$ points on $\bbC^2$, for which we can show that the Vafa-Witten conjecture holds.  Using the compactification of the monopole moduli space announced by Fritzsch, the first author and Singer, we can also give a proof of the Sen conjecture for the monopole moduli space of magnetic charge $3$.     
\end{abstract}

\tableofcontents

\section{Introduction}

In \cite{Joyce}, Joyce constructed complete Calabi-Yau metrics on crepant resolutions of $\bbC^n/\Gamma$ for $\Gamma\subset \SU(n)$ a finite subgroup.  When $\Gamma$ acts freely on $\bbC^n\setminus \{0\}$, this is an Asymptotically Locally Euclidean (ALE) metric.  However, when the action is not free away from the origin, the orbifold $\bbC^n/ \Gamma$ has rays of singularities going off to infinity and the metrics are only ALE away from these singularities, hence the name Quasi-Asymptotically Euclidean (QALE) introduced by Joyce.  To construct these metrics, Joyce solved a complex Monge-Ampère equation for an appropriate choice of  Kähler QALE-metric using some mapping properties of the corresponding Laplacian.  More recently, Mazzeo and Degeratu in \cite{DM2018} have introduced the notion of Quasi-Asymptotically Conical (QAC) metrics, essentially generalizing QALE-metrics in the same way that Asymptotically Conical (AC) metrics generalize ALE-metrics, and showed that the corresponding Laplacian is Fredholm when acting on suitable weighted Hölder or Sobolev spaces.    The definition of \cite{DM2018} was in terms of resolution blowups.  In \cite{CDR}, a coordinate-free definition of $\QAC$-metrics was provided in terms of a Lie structure at infinity and a natural compactification by a manifold with corners.  Using this point of view, the first examples of Calabi-Yau QAC-metrics that are not QALE or AC were constructed in \cite{CDR}.  More generally, the notion of Quasi-Fibered Boundary (QFB) metrics was introduced in \cite{CDR} by adding a compact fiber at infinity.  

Natural examples of hyperKähler QFB-metrics appear on moduli spaces.  For instance, on the Hilbert scheme $\operatorname{Hilb}_0^n(\bbC^2)$, it was shown in \cite{Carron2011} that the Nakajima metric is in fact a QALE-metric.  In \cite{KS}, a partial compactification of the moduli space of $\SU(2)$-monopoles on $\bbR^3$ is obtained via a gluing construction with the corresponding $L^2$-metric behaving like a QFB-metric in that direction.  In fact,  a full compactification of the monopole moduli space was announced in \cite{FKS} with the property that the corresponding $L^2$-metric is a QFB-metric.  Similarly, on the moduli space of $\SL(2,\bbC)$-Higgs bundles, a polynomial convergence at infinity of the $L^2$-metric towards the semi-flat metric was obtained in the regular part of the Hitchin system in \cite{MSWW2017}, a result that was subsequently improved to an exponential convergence in \cite{Dumas-Neitzke,Frederickson}.  In this latter setting, it is expected that the $L^2$-metric should be like a QFB-metric, but with some singular fibers at infinity.  

All these efforts to understand the asymptotic of these hyperKähler metrics were in part driven by various open conjectures about their Hodge cohomology, that is, their space of $L^2$-harmonic forms.  The first conjecture of the sort is the Sen conjecture \cite{Sen} coming from string theory and S-duality, which predicts that the Hodge cohomology  of the $L^2$-metric of the universal cover $\widetilde{\cM}^0_k$ of the reduced moduli space $\cM_k^0$ of  $\SU(2)$-monopoles of magnetic charge $k$ on $\bbR^3$ is only non-trivial in middle degree and admits a complete description in terms of a natural $\bbZ_k$-action.  More precisely, if $\mathcal{H}^{q}_{p}(\widetilde{\cM}_k^0)$ denotes the space of $L^2$-harmonic forms of degree $q$ and weight $p$ with respect to the $\bbZ_k$-action, then the Sen conjecture predicts in middle degree $q=2k-2$ that $\mathcal{H}^{2k-2}_{p}(\widetilde{\cM}_k^0)\cong \bbC$ if $k$ and $p$ are coprime and $\mathcal{H}^{2k-2}_{p}(\widetilde{\cM}_k^0)=0$ otherwise.  Soon after the formulation of the conjecture,  Segal and Selby in \cite{Segal-Selby} computed the relative and absolute cohomologies $H^*_c(\widetilde{\cM}_k^0)$ and $H^*(\widetilde{\cM}_k^0)$ of the universal cover of the reduced moduli space and gave supporting evidence to the conjecture. 
Indeed, they showed that the images $\Im\bigl[H^q_c(\widetilde{\cM}_k^0) \rightarrow H^q(\widetilde{\cM}_k^0)\bigr]$ satisfy the predictions of Sen's conjecture, and since
these images factor through the space of $L^2$-harmonic forms as composites of natural maps $H^q_c(\widetilde{\cM}_k^0) \rightarrow \cH^q(\widetilde{\cM}_k^0) \rightarrow H^q(\widetilde{\cM}_k^0)$,
it follows that $\cH^*(\widetilde{\cM}_k^0)$ is no smaller than Sen's prediction \cite[Sentence after Theorem~1.3]{Segal-Selby}. In fact, since Segal and Selby prove that $H^q_c(\widetilde{\cM}_k^0) \to H^q(\widetilde{\cM}_k^0)$ is an isomorphism in middle degree and trivial otherwise, the composite maps split and Sen's conjecture can be reformulated as saying that the associated inclusion
\begin{equation}
       \Im\left[ H^q_c(\cM_k^0)\to H^q(\cM_k^0)\right]  \hookrightarrow \cH^q(\cM_k^0)
\label{eq.1} \end{equation}
is in fact an isomorphism.  A major step toward a proof of the conjecture was subsequently made by Hitchin \cite{Hitchin}, who showed that for many hyperKähler metrics coming from hyperKähler quotients, in particular the $L^2$-metric on $\widetilde{\cM}^0_k$, the $L^2$-harmonic forms all lie in middle degree, giving a proof of the conjecture outside the middle degree and a full proof of the conjecture when $k=2$.  In this specific case, an alternative proof of the Sen conjecture was also subsequently obtained by Hausel-Hunsicker-Mazzeo \cite{HHM2004}. Since then, the unsettled part of the conjecture, that is, whether or not in middle degree the map \eqref{eq.1} is surjective, has remained open for $k\ge 3$.      

Shortly after Sen formulated his conjecture,  Vafa and Witten in \cite[Discussion after equation (4.43)]{Vafa-Witten} (see also \cite[Conjecture~1.4]{Carron2011}) made a similar S-duality prediction for the Hodge cohomology of quiver varieties.  For the Nakajima metric on $\Hilb_0^n(\bbC^2)$, their conjecture predicts that the natural inclusion
\begin{equation}
 \Im \left[ H^q_c(\Hilb_0^n(\bbC^2))\to H^q(\Hilb^n_0(\bbC^2)) \right]  \hookrightarrow   \cH^q(\Hilb^n_0(\bbC^2))
\label{eq.2}\end{equation}
constructed as above,
must be surjective.  Again, by the result of \cite{Hitchin}, it is automatically true except possibly in middle degree, and the argument in \cite[\S 5.5]{Hitchin} gives a complete proof of the conjecture when $n=2$.  Alternatively, when $n=2$, a proof of the conjecture follows from standard results about the $L^2$-cohomology of AC-metrics, see for instance \cite{MelroseGST} or \cite[Theorem~1A]{HHM2004}.  When $n=3$ instead, the conjecture follows from the computation by Carron \cite{Carron2011b}  of the Hodge cohomology of QALE-metrics of depth 2.  For the moduli space of $\SL(2,\bbC)$-Higgs bundles,
Hausel showed in \cite{Hausel} that the image of relative cohomology into absolute cohomology is trivial, so that inspired by \cite{Segal-Selby}, he was led to conjecture that the Hodge cohomology should be trivial.  Again in this case, the results of \cite{Hitchin} prove this conjecture except in middle degree.

In the present paper, we derive new results about the Hodge cohomology of $\QFB$-metrics.  In particular, we obtain the following advances on the Sen conjecture and the Vafa-Witten conjecture.

\begin{Theorem}
The Sen conjecture holds on $\widetilde{\cM}^0_3$ provided the natural $L^2$-metric on $\widetilde{\cM}^0_3$ is a $\QFB$-metric as announced in \cite{FKS}.  
\label{int.1}\end{Theorem}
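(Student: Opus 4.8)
The plan is to combine three inputs: Hitchin's reduction to middle degree, the topological reformulation of Segal--Selby, and the computation of reduced $L^2$-cohomology for $\QFB$-metrics that is the main technical content of this paper. By \cite{Hitchin} the space $\cH^q(\tcM^0_3)$ of $L^2$-harmonic forms vanishes for $q\neq 2k-2=4$, and the inclusion \eqref{eq.1} is automatically an isomorphism in those degrees; by \cite{Segal-Selby} the map \eqref{eq.1} is always injective, and the groups $H^4_c(\tcM^0_3)$, $H^4(\tcM^0_3)$ and the map between them --- together with the refinement recording the $\bbZ_3$-action --- are explicitly computed there. So it suffices to show that in middle degree $\dim\cH^4(\tcM^0_3)=\dim\Im\!\left[H^4_c(\tcM^0_3)\to H^4(\tcM^0_3)\right]$, and more precisely that this identity is compatible with the $\bbZ_3$-weight decomposition, which by the Segal--Selby computation reduces the theorem to the statement $\cH^4_\ell(\tcM^0_3)\cong\bbC$ for $\gcd(3,\ell)=1$ and $0$ otherwise.

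The second step is to feed the announced $\QFB$-structure of \cite{FKS} into the machinery of this paper. One first records the iterated fibration structure of the compactification $\bcM^0_3$: its boundary hypersurfaces are indexed by the cluster decompositions $3=2+1=1+1+1$, and the fibers of the associated fibrations are assembled from the $L^2$-metrics on lower-charge monopole moduli spaces --- the charge-$1$ pieces contributing Euclidean factors of relative positions and phases, and the charge-$2$ piece being built from the (ALF) Atiyah--Hitchin manifold, whose $L^2$-cohomology is already known from \cite{Hitchin, HHM2004}. Combining the weighted $L^2$-cohomology computation of the present paper with the decay of $L^2$-harmonic forms from the companion paper then expresses $\cH^q(\tcM^0_3)$ in terms of the cohomology of $\bcM^0_3$, the cohomology of these boundary fibers, and the connecting maps between the weighted cohomology groups at the weights adjacent to the critical one.

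The final step is the matching: one must show that the $\QFB$ $L^2$-cohomology of step two agrees in degree $4$ with $\Im\!\left[H^4_c(\tcM^0_3)\to H^4(\tcM^0_3)\right]$. This comes down to checking that the ``boundary correction'' terms --- the contributions of the fiber cohomology of $\partial\bcM^0_3$ in degrees near the middle --- vanish or cancel, using the contractibility of the charge-$1$ strata and the known (degree $2$, explicitly described) reduced $L^2$-cohomology of the charge-$2$ stratum, exactly the input that settled the $k=2$ case. Tracking the $\bbZ_3$-weights through this identification should then produce the predicted answer.

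The main obstacle I expect is twofold. First, one must verify that the hypotheses of the general $L^2$-cohomology theorem genuinely hold for the \cite{FKS} compactification --- typically a non-degeneracy or ``Witt-type'' condition on the boundary fibrations, \eg the vanishing of the fiberwise $L^2$-cohomology in the critical degree, ensuring that weighted $L^2$-cohomology does not jump as the weight crosses $0$ --- and this requires more explicit information about a compactification that is at present only announced. Second, and more serious, is controlling precisely that critical-weight behaviour: passing from weighted $L^2$-cohomology to $\cH^4$ could a priori create extra harmonic classes at weight $0$, and ruling them out is exactly the surjectivity in \eqref{eq.1} that has been open for $k\geq 3$. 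It is the fine asymptotic control provided by the companion paper's decay estimates that makes this last point tractable, by forcing any middle-degree $L^2$-harmonic form to be determined by boundary data that the topology of $\bcM^0_3$ already accounts for.
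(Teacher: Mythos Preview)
Your plan is essentially the paper's approach: reduce to $q=4$ via Hitchin, identify $L^2\cH^4$ with $\Im(\WH^4_{\QFB}(\bcM^0_3,\phi,a)\to\WH^4_{\QFB}(\bcM^0_3,\phi,-a))$ using the decay result from the companion paper (after verifying its hypotheses from the explicit boundary structure of \cite{FKS}), and then match these weighted groups with $H^4_c$ and $H^4$. The paper's concrete route for your ``matching'' step, which you leave unspecified, goes through intersection cohomology: Corollary~\ref{wl2.35} identifies $\WH^4_{\QFB}(\bcM^0_3,\phi,\pm a)$ with $\IH^4_{\overline{\mathfrak{m}}}(\hcM^0_3)$ and $\IH^4_{\underline{\mathfrak{m}}}(\hcM^0_3)$, and then a direct topological computation (Proposition~\ref{mms.14}) shows these coincide with $H^4_c(\tcM^0_3)$ and $H^4(\tcM^0_3)$ respectively --- your ``boundary correction vanishing'' becomes the concrete statement $\IH^4_{\overline{\mathfrak{m}}}(\hU\cup\hV)=0$ for a neighborhood $\hU\cup\hV$ of the singular strata, proved via Leray spectral sequences and the known cohomology of $\cN_2$. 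One simplification: you need not track $\bbZ_3$-weights through the argument; once the isomorphism $L^2\cH^4\cong\Im(H^4_c\to H^4)$ is established, it is automatically $\bbZ_3$-equivariant (the deck group acts by isometries), and Segal--Selby have already computed the weight decomposition of the right-hand side.
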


\begin{Theorem}
The Vafa-Witten conjecture holds on $\Hilb^n_0(\bbC^2)$ for all $n\ge 2$.
\label{int.2}\end{Theorem}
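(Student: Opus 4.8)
The plan is to use the fact, due to \cite{Carron2011}, that the Nakajima metric on $\Hilb^4_0(\bbC^2)$ is a QALE-metric, hence in particular a $\QFB$-metric, and to feed its iterated fibred-boundary compactification into the weighted $L^2$-cohomology theorem of the present paper together with the sharp decay of $L^2$-harmonic forms established in the companion paper. Since $\Hilb^4_0(\bbC^2)$ is a crepant resolution of $\bbC^6/S_4$ --- with $\bbC^6$ the reduced permutation representation of $S_4$ tensored with $\bbC^2$ --- its boundary structure is governed by the non-open strata of $\bbC^6/S_4$, indexed by the partitions $(2,1,1),(2,2),(3,1),(4)$ of $4$; the longest chain among these has length three, so $\Hilb^4_0(\bbC^2)$ is a depth-$3$ QALE-manifold, which is why the depth-$2$ computation of \cite{Carron2011b} (settling the case $n=3$) does not suffice. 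The Vafa--Witten conjecture \eqref{eq.2} asserts surjectivity of the natural, and automatically injective, map $\Im[H^q_c(\Hilb^4_0(\bbC^2))\to H^q(\Hilb^4_0(\bbC^2))]\hookrightarrow \cH^q(\Hilb^4_0(\bbC^2))$, so it is equivalent to an equality of dimensions degree by degree. By \cite{Hitchin}, $\cH^q(\Hilb^4_0(\bbC^2))$ vanishes for $q$ away from the middle degree $6=\dim_{\bbC}\Hilb^4_0(\bbC^2)$, hence so does the left-hand side, and the whole conjecture comes down to the single identity $\dim\cH^6(\Hilb^4_0(\bbC^2))=\dim\Im[H^6_c(\Hilb^4_0(\bbC^2))\to H^6(\Hilb^4_0(\bbC^2))]$.

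Next I would record the shape of the links and fibres at infinity. Along the boundary stratum attached to a partition $\lambda=(\lambda_1,\dots,\lambda_k)$ the transverse model, after crepant resolution, is built from the punctual Hilbert schemes of the parts, so the links occurring for $n=4$ are assembled, by products and quotients by the residual symmetries permuting equal parts, out of $\Hilb^k_0(\bbC^2)$ and $\Hilb^k(\bbC^2)\cong \bbC^2\times\Hilb^k_0(\bbC^2)$ with $k\le 3$, together with the compact, cohomologically explicit punctual fibres of the resolution. This is precisely the inductive input demanded by the main theorem: for $k=2$ the relevant metric is AC and its $L^2$-cohomology is computed in \cite{HHM2004}, for $k=3$ it is computed in \cite{Carron2011b}, and a K\"unneth argument together with the invariance of $L^2$-cohomology under finite quotients then determines the reduced and weighted $L^2$-cohomology of every link appearing at level $4$.

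The core of the proof is then the verification of the hypotheses of the main $L^2$-cohomology theorem for $X=\Hilb^4_0(\bbC^2)$: a Witt/gap vanishing on the links at each of the three levels, the vanishing of the pertinent weighted $L^2$-cohomology of the fibres in the degrees dictated by the fibre dimensions, and --- the crucial point --- the fact that the decay rate of genuine $L^2$-harmonic $6$-forms, as furnished by the companion paper, is strictly better than every weight threshold across which weighted $L^2$-cohomology jumps, so that $\cH^6(\Hilb^4_0(\bbC^2))$ is computed by the weighted $L^2$-cohomology immediately past weight $0$ and hence by $\Im[H^6_c\to H^6]$. The main obstacle is exactly this bookkeeping over the three nested levels of the QALE structure: one must pin down the weight thresholds contributed by each boundary stratum, verify the strict-decay inequality against all of them at once, and assemble the stratum-by-stratum contributions through the Mayer--Vietoris / spectral-sequence mechanism underlying the main theorem. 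Should that mechanism deliver an intersection-cohomology-type group rather than $\Im[H^*_c\to H^*]$ on the nose, the only remaining task is the topological identification of the two, for which one invokes the semismallness of the Hilbert--Chow morphism $\Hilb^4_0(\bbC^2)\to\bbC^6/S_4$: the image of compactly supported cohomology in cohomology is then the pure part of $H^*$, computable from the combinatorics of partitions of $4$ and the Betti numbers of the punctual Hilbert schemes via G\"ottsche's formula, and one checks that this number agrees with the $L^2$-count.
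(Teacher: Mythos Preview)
There is a genuine gap in your proposal, and it is exactly the crux of the $n=4$ case. You assert that ``the decay rate of genuine $L^2$-harmonic $6$-forms, as furnished by the companion paper, is strictly better than every weight threshold,'' i.e., that $L^2$-harmonic forms lie in $v^{\epsilon}L^2$ for the total boundary defining function $v$. But Theorem~\ref{qfb.12} from the companion paper only produces decay at the \emph{maximal and submaximal} boundary hypersurfaces. For $\tHilb^4_0(\bbC^2)$ there are four boundary hypersurfaces, corresponding to the non-minimal partitions $(3,1)$, $(2,2)$, $(2,1,1)$, $(1,1,1,1)$ (your list $(2,1,1),(2,2),(3,1),(4)$ is off: $(4)$ is the minimal partition and does not give a boundary face, while the maximal one $(1,1,1,1)$ does). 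The companion paper yields decay only at $H_3$ (submaximal, partition $(2,1,1)$) and $H_4$ (maximal, partition $(1,1,1,1)$), and says nothing about $H_1$ (partition $(3,1)$) or $H_2$ (partition $(2,2)$). If full decay were available, your outline would work immediately---this is exactly the content of Remark~\ref{hs.13}---but it is not.

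What the paper actually does is a bootstrap that you have not supplied. Starting from a harmonic $6$-form $\omega\in (x_3x_4)^{\epsilon}L^2$, one works in a collar of $H_1$, where the local model is a subset of $[0,\delta)_v\times Z_1\times\bbS^3$ with $Z_1=\tHilb^3_0(\bbC^2)$. A K\"unneth argument against the $\bbS^3$ factor splits $\omega$ (up to an exact piece) into a degree-$6$ part on $\cV_1$ and a degree-$3$ part on $\cV_1$ wedged with the volume form of $\bbS^3$. The degree-$6$ piece is above the threshold and is exact by Lemma~\ref{sa.1}. The degree-$3$ piece is below threshold, so Lemma~\ref{sa.2} reduces it to a closed form in $(x_3x_4)^{\epsilon}L^2\Omega^3(Z_1)$; by Corollary~\ref{sa.10} this weighted group is $H^3_c(\Hilb^3_0(\bbC^2))$, which vanishes by Nakajima's result that $\Hilb^n_0(\bbC^2)$ has no cohomology above middle degree (and hence, by Poincar\'e duality, none below it either in compact supports). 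Thus $\omega$ is exact near $H_1$; an analogous argument (on a $\bbZ_2$-cover, since the bundle over $S_2\cong\bbR\bbP^3$ is nontrivial) handles $H_2$. Only after this hands-on step does one have a representative supported away from $H_1\cup H_2$, hence in $v^{\epsilon}L^2$, at which point Corollary~\ref{sa.10} finishes the job. Your proposal skips this entire mechanism, and without it the argument does not close for $n=4$.
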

\begin{Remark}
Relying on a different approach, a proof of the Vafa-Witten conjecture on $\Hilb^n_0(\bbC^2)$ for all $n$ was announced by Melrose in \cite{Melrose_CIRM}.
\label{int.3}\end{Remark}

Our general strategy to prove such a result is strongly inspired by the work of Hausel-Hunsicker-Mazzeo \cite{HHM2004}, where a complete description of the Hodge cohomology of fibered boundary and fibered cusp metrics in terms of intersection cohomology was obtained.  As the name suggests, fibered boundary metrics are a particular case of $\QFB$-metrics.  Let us recall that on a compact manifold $M$ with boundary $\pa M$ equipped with a fiber bundle $\phi: \pa M\to Y$ over a closed manifold $Y$ and a tubular neighborhood $c: \pa M\times [0,\delta)\to M$ of $\pa M$, an example of fibered boundary metric is given by a Riemannian metric $g_{\phi}$ on $M\setminus \pa M$ such that 
$$
    c^*g_{\phi}= \frac{dx^2}{x^4}+ \frac{\phi^*g_Y}{x^2}+ \kappa,
$$ 
where $x$ is the coordinate on the factor $[0,\delta)$, $g_{Y}$ is a Riemannian metric on $Y$ and $\kappa$ is a symmetric $2$-tensor on $\pa M$ such that $\phi^*g_Y+ \kappa$ is a Riemannian metric on $\pa M$ making $\phi: \pa M\to Y$ a Riemannian submersion with respect to the Riemannian metrics $\phi^*g_Y+\kappa$ and $g_Y$.  On the other hand, a fibered cusp metric $g_{\fc}$ is a complete metric on $M\setminus \pa M$ which near $\pa M$ is of the form 
$$
      g_{\fc}= x^2 g_{\phi}
$$
for some fibered boundary metric $g_{\phi}$.  For instance, for $g_Y$ and $\kappa$ as above, an example of fibered cusp metric $g_{\fc}$ is given by one such that 
$$
     c^*g_{\fc}= \frac{dx^2}{x^2}+ \phi^*g_Y+ x^2\kappa.
$$  

The main result of \cite{HHM2004} roughly relies on two intermediate results.  The first one, topological in nature, consists in identifying weighted $L^2$-cohomology groups of fibered boundary and  fibered cusp metrics with suitable intersection cohomology groups.  Since fibered cusp and fibered boundary metrics are conformally related, one in fact only needs to establish such a result for the weighted $L^2$-cohomology of a fibered cusp metric.  These weighted $L^2$-cohomology groups can be understood as the cohomology groups of a sheaf on an associated stratified space, so that using Mayer-Vietoris long exact sequences, it suffices to identify these weighted $L^2$-cohomology groups with intersection cohomology for local models.  These local identifications in turn can be achieved thanks to the K\"unneth formula of Zucker \cite[Corollary~2.34]{Zucker} for the $L^2$-cohomology of warped products.    Except for certain types of fibered cusp metrics, notice that $L^2$-cohomology itself is infinite dimensional and cannot be identified with some intersection cohomology, hence the importance to introduce a weight to obtain such an identification in general.
The second intermediate result, more analytical in nature, consists in showing that a $L^2$-harmonic form with respect to a fibered boundary or a fibered cusp metric admits a polyhomogeneous expansion at infinity, so that in particular it decays a bit faster compared to a general $L^2$-form.  This can be established thanks to the pseudodifferential calculus of Mazzeo-Melrose \cite{Mazzeo-MelrosePhi} and the parametrix construction of Vaillant \cite{Vaillant}.  These also allow to show that the Hodge-deRham operator is Fredholm when acting on suitable Sobolev spaces, a result which is also used in \cite{HHM2004}.

It turns out that both of these intermediate results can be suitably adapted to study the Hodge cohomology of  $\QFB$-metrics.  First, to compute the weighted $L^2$-cohomology of a $\QFB$-metric, we can in fact introduce the analog of fibered cusp metrics, namely the notion quasi-fibered cusp metrics ($\QFC$-metrics), a class of metrics conformally related to $\QFB$-metrics, see Definition~\ref{wl2.1} below for more details.  Quasi-fibered cusp metrics should not be confused with the class of iterated fibered cusp metrics considered in \cite{DLR,HR}, which constitutes yet another way of generalizing the notion of fibered cusp metrics to stratified spaces of higher depth.  In fact, one important difference is that as opposed to iterated fibered cusp metrics, $\QFC$-metrics do not admit nice local models in terms of warped products.  In particular, the K\"unneth formula of Zucker for the $L^2$-cohomology of a warped product cannot be used to compute the $L^2$-cohomology of local models of $\QFC$-metrics.  As described in \eqref{qfb.11} below, a good local model for $\QFB$-metrics is not given by a warped product, but by a \textbf{subset} of a Cartesian product.  To compute the weighted $L^2$-cohomology of such a local model, our main tool consists instead in using basic mapping properties of the exterior differential on a half-line.  As in \cite{HHM2004}, we need to show that we can define weighted $L^2$-cohomology using a sheaf of conormal forms, which can be achieved using a soft parametrix inverting the Hodge Laplacian of a $\QFB$-metric, for instance using the general pseudodifferential calculus of \cite{ALN2007} for Lie structures at infinity.  With this approach, we can in many cases identified weighted $L^2$-cohomology groups with some intersection cohomology groups.  In the following case, we can even compute the Hodge cohomology of a $\QFC$-metric (see also Corollary~\ref{wl2.41} below for more details).
\begin{Theorem}
Let $g_{\QFC}$ be a $\QFC$-metric on the regular part of a smoothly stratified space $\hM$.  Suppose that  the link $\widehat{Z}$ of any singular stratum $\widehat{S}$ of $\hM$ is odd dimensional and is such that  
$$
       \IH_{\underline{\mathfrak{m}}}^{\frac{\dim \widehat{Z}+1}2}(\widehat{Z})=\{0\},
$$
where $ \IH_{\underline{\mathfrak{m}}}^{q}(\widehat{Z})$ denotes the intersection cohomology group of lower middle perversity in degree $q$.  Then the $L^2$-cohomology of $g_{\QFC}$ and its Hodge cohomology are both naturally identified with $\IH^*_{\underline{\mathfrak{m}}}(\hM)$.  
\label{int.4}\end{Theorem}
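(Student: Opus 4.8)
The plan is to prove the statement in three stages: an analytic reduction making the (weighted) $L^2$-cohomology computable by sheaf theory and tying it to Hodge cohomology; a topological computation identifying the weighted $L^2$-cohomology with intersection cohomology for a weight-dependent perversity; and a verification that, under the hypothesis on the links, the unweighted metric sits at the critical weight for which this perversity is $\underline{\mathfrak m}$ and for which the de Rham complex still has closed range. For the first stage, I would use a soft parametrix for the Hodge Laplacian $\Delta$ of $g_{\QFC}$, built from the pseudodifferential calculus for Lie structures at infinity of \cite{ALN2007} (equivalently, the $\QFB$-calculus developed above), to show that every weighted $L^2$-cohomology group $H^*_{\vec a,(2)}(g_{\QFC})$ with $\vec a$ in a suitable range is computed by the subcomplex of \emph{conormal} weighted-$L^2$ forms; combined with the polyhomogeneity and decay of $L^2$-harmonic forms from the companion paper, this gives Fredholmness of the Hodge--deRham operator $D = d + \delta$ on the relevant weighted Sobolev spaces, hence closed range of $d$ on the corresponding $L^2$ de Rham complexes, so that reduced and unreduced $L^2$-cohomology agree and the Hodge--Kodaira decomposition identifies $\cH^*(g_{\QFC})$ with the unreduced $H^*_{(2)}(g_{\QFC})$ --- once $\vec a = 0$ is known to lie in the Fredholm range, which is settled in the third stage.

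For the second stage, the conormal weighted-$L^2$ de Rham complex is the complex of global sections of a complex of fine sheaves on the stratified space $\hM$, so its cohomology is the hypercohomology of that complex and, by Mayer--Vietoris, is computed from distinguished neighbourhoods of the strata. Near a singular stratum $\widehat S$ with link $\widehat Z$ of dimension $\ell := \dim \widehat Z$, the $\QFC$-metric is modelled on a subset of a Cartesian product rather than a warped product, so Zucker's K\"unneth formula is unavailable; instead I would compute the local weighted $L^2$-cohomology by combining the elementary mapping properties of the exterior differential on the half-line parametrising the distance to $\widehat S$ with, inductively on the depth, the weighted $L^2$-cohomology of the $\QFC$-metric induced on the regular part of $\widehat Z$ (the links of $\widehat Z$ being among the links of the singular strata of $\hM$, the inductive hypothesis is inherited). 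The outcome, for generic $\vec a$, is that the local cohomology is the stalk of the intersection chain complex $IC_{\bar{\mathfrak p}(\vec a)}$ for a perversity $\bar{\mathfrak p}(\vec a)$ that decreases as the weights cross certain critical values, whence $H^*_{\vec a,(2)}(g_{\QFC}) \cong \IH^*_{\bar{\mathfrak p}(\vec a)}(\hM)$, as in Corollary~\ref{wl2.41}.

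For the third stage, the half-line computation for the unweighted metric near $\widehat S$ truncates the contribution of $\widehat Z$ at degree $\lceil \ell/2\rceil = \frac{\ell+1}{2}$, the term in the exact degree $\frac{\ell+1}{2}$ being $\IH^{(\ell+1)/2}_{\underline{\mathfrak m}}(\widehat Z)$. The hypothesis that $\ell$ is odd and this group vanishes plays a double role: topologically it removes the would-be jump of $\bar{\mathfrak p}(\vec a)$ at $\vec a = 0$, so that $0$ is generic and the perversity read off there is the lower middle perversity, giving $H^*_{(2)}(g_{\QFC}) \cong \IH^*_{\underline{\mathfrak m}}(\hM)$; analytically it removes the matching indicial root of $\Delta$ at weight $0$ (these roots being governed by the $L^2$-cohomology of the link's $\QFC$-metric, hence by $\IH^*_{\underline{\mathfrak m}}(\widehat Z)$), placing $\vec a = 0$ in the Fredholm range used in the first stage. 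Combining the two, $\cH^*(g_{\QFC}) \cong H^*_{(2)}(g_{\QFC}) \cong \IH^*_{\underline{\mathfrak m}}(\hM)$.

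The hard part will be the local computation near a singular stratum: with only a subset-of-a-product model at hand, there is no K\"unneth shortcut, and one must directly analyse the weighted $L^2$-cohomology of the exterior differential on a half-line twisted by the depth-reduced family of $L^2$ de Rham complexes of the link, while tracking precisely how the weights translate into perversities --- and, at the borderline unweighted value, recognising that the vanishing of $\IH^{(\ell+1)/2}_{\underline{\mathfrak m}}(\widehat Z)$ is exactly what makes the local $L^2$-cohomology finite-dimensional, equal to the $IC_{\underline{\mathfrak m}}$ stalk, and compatible with closed range of $d$. Organising the induction on depth, and confirming that this single hypothesis on links propagates cleanly through it, is the most delicate bookkeeping.
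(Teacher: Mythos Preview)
Your overall architecture matches the paper's: reduce to conormal forms via a soft parametrix (this is Lemma~\ref{wl2.17}), compute the local weighted $L^2$-cohomology near each stratum using the half-line mapping properties rather than a K\"unneth formula (Lemmas~\ref{sa.1}--\ref{sa.3} and Theorem~\ref{wl2.26}), identify the result with the stalks of $IC_{\underline{\mathfrak m}}$, and propagate by induction on depth. That part is fine.

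Where you diverge is in the analytic stage, and there you make it harder than it needs to be and lean on a tool that is not available under the stated hypotheses. You propose to obtain closed range of $d$ (and hence the identification of $L^2$-cohomology with Hodge cohomology) from Fredholmness of $D=d+\delta$, which in turn you want to extract from the decay/polyhomogeneity results of the companion paper together with an indicial-root analysis at weight $0$. But Theorem~\ref{qfb.12} is stated for $\QFB$-metrics, not $\QFC$-metrics, and carries several extra hypotheses (Witt conditions on $(F_i,g_{S_i})$, vanishing of $\ker\mathfrak d_{S_i}$ in specific degrees, the conditions \eqref{qfb.13} or \eqref{qfb.14}) that are nowhere implied by the single hypothesis on the links in the present theorem. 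So that route does not go through as written.

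The paper avoids this entirely. Once the local computation is done and the induction on depth is set up, Corollary~\ref{sa.6} gives finite dimensionality of $\WH^*_{\QFC}(M,\phi,0)$ directly from Mayer--Vietoris; finite dimensionality of cohomology already forces $d$ to have closed range, and then Corollary~\ref{sa.8} yields the Kodaira decomposition and the identification $L^2\cH^*\cong \WH^*_{\QFC}(M,\phi,0)$ with no parametrix-based Fredholm theory, no indicial roots, and no input from \cite{KR1}. Correspondingly, the link hypothesis is used only once, not twice: by induction it gives $\WH^{(m_i+1)/2}_{\QFC}(Z_i,\phi,0)\cong \IH^{(m_i+1)/2}_{\underline{\mathfrak m}}(\widehat{Z}_i)=\{0\}$, which makes the injectivity condition in Lemma~\ref{sa.3} vacuous at the critical degree and allows Theorem~\ref{wl2.26} to apply at $a=0$. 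There is no separate ``analytic'' role for the hypothesis; drop the indicial-root discussion and the appeal to the companion paper, and your argument collapses to the paper's.
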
  
In \cite{HR}, a similar identification was obtained in the case of an iterated fibered cusp, though in this latter case one only needs the weaker and simpler assumption that the smoothly stratified space $\hM$ be Witt.

For the computation of the Hodge cohomology of $\QFB$-metrics, more analysis must be involved, especially since already for fibered boundary metrics, the $L^2$cohomology is infinite dimensional and distinct from the Hodge cohomology.  In particular, to study the asymptotic behavior of $L^2$-harmonic forms of a $\QFB$-metric, we have developed in the companion paper \cite{KR1} a pseudodifferential calculus suitable to construct parametrices for  the Hodge-deRham operator of a $\QFB$-operator.  Such parametrices allow us, for appropriate $\QFB$-metrics, to show that $L^2$-harmonic forms decay at infinity a bit faster than a general $L^2$-form.  The specific result that we will invoke from \cite{KR1} is stated in Theorem~\ref{qfb.12} below.  

This decay of $L^2$-harmonic forms and our computation of the weighted $L^2$-cohomology of a $\QFC$-metric
allow us to compute the Hodge cohomology of various $\QFB$-metrics.  In order to do this, we diverge from the approach used in the proof of \cite[Theorem~1C]{HHM2004}.  Instead, we propose a softer argument by considering the inverse of the map considered in \cite[Theorem~1C]{HHM2004}, e.g., in the case of the Sen conjecture and the Vafa-Witten conjecture, we consider the maps \eqref{eq.1} and \eqref{eq.2}.  Indeed, in \cite{HHM2004}, the Fredholmness of the Hodge-deRham operator is used by the authors to show the surjectivity of their map.  The analogue in our setting corresponds to establishing the injectivity of our map, which can be achieved following the approach of Segal and Selby \cite[Lemma~1.4]{Segal-Selby}, see for instance the proof of Theorem~\ref{hs.12} or Proposition~\ref{mms.5} below.  On the other hand, the surjectivity of our map follows almost immediately from the decay of $L^2$-harmonic forms.  

In the particular setting of Theorem~\ref{int.1} and Theorem~\ref{int.2}, we also need to identify our weighted $L^2$-cohomology groups, already identified with some intersection cohomology groups, with the usual relative and absolute cohomology groups.  For $\Hilb^n_0(\bbC^2)$, this follows essentially from a result of Nakajima \cite[Corollary~5.10]{Nakajima} asserting that $\Hilb^n_0(\bbC^2)$ has no absolute cohomology above the middle degree, see Corollary~\ref{sa.10} below.     For $\widetilde{M}^0_3$, we have a similar identification, but only in middle degree and using a more intricate argument relying instead on \cite{Segal-Selby}; see Proposition~\ref{mms.14} below.  Unfortunately, this argument does not seem to extend to $\widetilde{M}^0_k$ for $k\ge 4$; see the discussion after Theorem~\ref{mms.30} for more details. Our approach can also be used to give a new proof and a generalization of the results of Carron in \cite{Carron2011b}, see in particular Theorem~\ref{do.6} and Corollary~\ref{do.9}.

The paper is organized as follows.  In \S~\ref{qfb.0}, we review basic facts about $\QFB$-metrics and we state the result from \cite{KR1} that we will used.  In \S~\ref{wl2.0}, we introduce $\QFC$-metrics and derive our results about their weighted $L^2$-cohomology groups.  This is applied to the Nakajima metric on $\Hilb^n_0(\bbC^2)$ in \S~\ref{hs.0},  where we derive in particular a proof of Theorem~\ref{int.2}. In \S~\ref{do.0}, this is applied instead to a large class of $\QAC$-metrics of depth 2 including those considered by Carron in \cite{Carron2011b}.  Finally, in \S~\ref{mms.0}, our results about weighted $L^2$-cohomology are used to study the Hodge cohomology on the reduced $\SU(2)$-monopole moduli space of charge $3$ and prove Theorem~\ref{int.1}.  
\begin{Acknowledgement}
The authors are grateful to Rafe Mazzeo, Richard Melrose and Michael Singer for helpful discussions, as well as to an anonymous referee for useful suggestions. CK was supported by NSF Grant number DMS-1811995.  This paper is also based in part on work supported by NSF under grant DMS-1440140 while CK was in residence at the Mathematical Sciences Research Institute in Berkeley, California during the Fall 2019.  FR was supported by NSERC and a Canada research chair.
\end{Acknowledgement}

\numberwithin{equation}{section}

\section{Quasi-fibered boundary metrics} \label{qfb.0}

Let $M$ be a compact manifold with corners in the sense of \cite{Melrose1992, MelroseMWC,Grieser}.  Let $H_1,\ldots, H_{\ell}$ be a complete list of its boundary hypersurfaces.  

\begin{definition}[\cite{AM2011,ALMP2012,DLR}]
  Let $\phi=\{\phi_1,\ldots,\phi_{\ell}\}$ be a collection of fiber bundles $\phi_i:H_i\to S_i$ over a compact manifold with corners $S_i$.   We say that $\phi$ is an \textbf{iterated fibration structure} for $M$ if there is a partial order on the boundary hypersurfaces of $M$ such that 
\begin{itemize}
\item Any subset $\cI$ of boundary hypersurfaces such that $\cap_{i\in\cI} H_i\ne \emptyset$ is totally ordered;
\item If $H_{i}<H_j$, then $H_i\cap H_j\ne \emptyset$, $\phi_i|_{H_{i}\cap H_j}: H_i\cap H_j\to S_i$ is a surjective submersion, $S_{ji}:= \phi_j(H_j\cap H_i)$ is a boundary hypersurface of the manifold with corners $S_j$ and there is a surjective  submersion $\phi_{ji}: S_{ji}\to S_i$ such that $\phi_{ji}\circ \phi_j=\phi_i$ on $H_i\cap H_j$;
\item The boundary hypersurfaces of $S_j$ are given by $S_{ji}$ for $H_i<H_j$.
\end{itemize}
In this case, we say that the pair $(M,\phi)$ is a \textbf{manifold with fibered corners}.
\label{qfb.1}\end{definition}

If $(M,\phi)$ is a manifold with fibered corners, then for each fiber bundle $\phi_j: H_j\to S_j$, the base $S_j$ automatically inherits an iterated fibration structure specified by the fiber bundles $\phi_{ji}: S_{ji}\to S_i$ for each $i$ with $H_i<H_j$.  Each fiber of $\phi_j:H_j\to S_j$ has also a natural iterated fibration structure induced by the fiber bundles $\phi_i: H_i\to S_i$ for each $H_i>H_j$.   

As described in \cite{ALMP2012, DLR}, collapsing the fibers of $\phi_i$ onto their base for each boundary hypersurface $H_i$ yields a smoothly stratified space $\hM_{\phi}$.  In particular, for each boundary hypersurface $H_i$, one can associate to $S_i$ a smoothly stratified space $\widehat{S}_i$.  Each $\widehat{S}_i$ is naturally included in $\hM_{\phi}$ as a singular stratum and all the singular strata of $\hM_{\phi}$ arise in this way.  The depth of the stratified space $\hM_{\phi}$ then corresponds to the depth of the manifold with corners $M$, which can be defined as  the highest possible codimension of a corner of $M$.      

On a manifold with fibered corners $(M,\phi)$, a \textbf{maximal} boundary hypersurface is a boundary hypersurface which is maximal with respect to the partial order.  A boundary hypersurface which is not maximal is said to be \textbf{non-maximal}.  A \textbf{submaximal} boundary hypersurface of $H_j$ is a non-maximal boundary hypersurface such that 
$$
     H_i>H_j\quad \Longrightarrow \quad H_i \; \mbox{is maximal}.
$$
For each boundary hypersurface $H_i$, let $x_i\in \CI(M)$ be a boundary defining function, that is, $x_i$ takes nonnegative values, $x_i^{-1}(0)= H_i$ and $dx_i$ is nowhere zero on $H_i$.  We say that $x_i$ is \textbf{compatible} with the iterated fibration structure $\phi$ if $x_i$ restricted to $H_j$ is constant in the fibers of $\phi_j: H_j\to S_j$ whenever $H_j>H_i$.  In this paper, we will always assume that our boundary defining functions are compatible with the iterated fibration structure $\phi$.  This obviously imposes restrictions on the type of boundary defining functions we will consider, but no restriction on the type of manifolds with fibered corners by \cite[Lemma~1.4]{DLR}.

\begin{definition}
Let $v=\prod_i x_i$ be a total boundary defining function for the manifold with fibered corners $(M,\phi)$.  The space $\cV_{\QFB}(M)$ of \textbf{quasi-fibered boundary vector fields} ($\QFB$-vector fields) consists in smooth vector fields $\xi$ in $M$ such that 
\begin{itemize}
\item[(i)] $\xi$ is tangent to the fibers of $\phi_i: H_i\to S_i$ for each boundary hypersurface $H_i$ of $M$;
\item[(ii)] $\xi v\in v^2\CI(M)$.  
\end{itemize}
\label{qfb.2}\end{definition}
\begin{remark}
As explained in \cite{KR1}, this definition is equivalent to the more complicated definition originally provided in \cite{CDR}.
\label{qfb.2new}\end{remark}
Notice that a $\QFB$-vector field is in particular tangent to $H_i$ for each boundary hypersurface, so $\cV_{\QFB}(M)$ is a subspace of the Lie algebra of $b$-vector fields of Melrose \cite{MelroseAPS},
$$
      \cV_b(M)= \{\xi\in \CI(M;TM)\; | \; \xi x_i\in x_i\CI(M) \; \forall i\}.
$$
In fact, just imposing condition (i) gives the Lie algebra of edge vector fields $\cV_e(M)$ of Mazzeo \cite{MazzeoEdge, ALMP2012,AG}.

Condition (ii) on the other hand clearly depends on the choice of $v$.  By \cite[Lemma~1.1]{KR1}, two total boundary defining functions $v$ and $v'$ will give the same space of $\QFB$-vector fields if and only if the function $\frac{v}{v'}$ is constant on the fibers of $\phi_i$ for each boundary hypersurface $H_i$.  Clearly, conditions (i) and (ii) are preserved by the Lie bracket, so that $\cV_{\QFB}(M)$ is a Lie subalgebra of $\cV_b(M)$.  As explained in \cite{CDR}, there is in fact a natural bundle ${}^{\QFB}TM\to M$, called the $\QFB$-tangent bundle, and a natural map 
\begin{equation}
    a: {}^{\QFB}TM\to TM
\label{qfb.3}\end{equation}
inducing a canonical inclusion 
\begin{equation}
   a_*: \CI(M;{}^{\QFB}TM)\to \cV_{\QFB}(M).
\label{qfb.4}\end{equation}
This gives ${}^{\QFB}TM$ the structure of a Lie algebroid with anchor map \eqref{qfb.3}.  The anchor map is not an isomorphism of vector bundles, but it becomes one when restricted to the interior of $M$,
\begin{equation}
      a:  {}^{\QFB}TM|_{M\setminus \pa M} \tilde{\longrightarrow} T(M\setminus \pa M).
\label{qfb.5}\end{equation}
\begin{definition}[\cite{CDR,KR1}]
A \textbf{quasi-fibered boundary metric} ($\QFB$-metric) for a manifold with fibered corners $(M,\phi)$ and a choice of total boundary defining function $v$ is a Riemannian metric on the interior of $M$ which is of the form 
$$
    a_*(h|_{M\setminus \pa M})
$$
for some choice of bundle metric $h\in \CI(M;S^2({}^{\QFB}T^*M))$ for the vector bundle ${}^{\QFB}TM$.  In this case, we say that the manifold with corners $M$ is the \textbf{$\QFB$-compactification} of the corresponding Riemannian manifold.   When $(M,\phi)$ is such that for each maximal boundary hypersurface $H_i$, $S_i=H_i$ and $\phi_i$ is the identity map, a $\QFB$-metric is also said to be a \textbf{quasi-asymptotically conical} metric ($\QAC$-metric), in which case $M$ is also said to be the $\QAC$-compactification of the corresponding Riemannian manifold.    
\label{qfb.6}\end{definition}
A good measure of the complexity of a $\QFB$-metric is its \textbf{depth}, which we take\footnote{A slightly different convention is used in \cite{DM2018}.} to be the depth of the underlying manifold with fibered corners.
When $g_{\QFB}$ is a $\QFB$-metric, the pair $(M\setminus\pa M,g_{\QFB})$ is a particular example of Riemannian manifold with Lie structure at infinity in the sense of \cite{ALN04}.  As such, a $\QFB$-metric is a complete Riemannian metric of infinite volume with curvature and all its covariant derivatives bounded.  By \cite[Proposition~1.27]{CDR} or \cite{Bui}, the injectivity radius of $\QFB$-metric is bounded below by a positive constant, so that $\QFB$-metrics have bounded geometry.  

Similarly, to the Lie algebra of edge vector fields, one can associate the edge tangent bundle ${}^eTM\to M$ and the class of edge metrics on $M\setminus \pa M$.  Again, for $g_e$ an edge metric, the pair $(M\setminus \pa M,g_e)$ is a Riemannian manifold with Lie structure at infinity.  To $(M,\phi)$, one can yet associated a third class of metrics, namely the class of \textbf{wedge} metrics (also called incomplete iterated edge metrics in \cite{ALMP2012}), given by metrics $g_w$ on $M\setminus\pa M$  of the form
$$
 g_w= v^2g_e
$$  
for some edge metric $g_e$.  A wedge metric $g_w$ is of finite volume and is geodesically incomplete, so the pair $(M\setminus \pa M,g_w)$ is not a Riemannian manifold with Lie structure at infinity.  

In this paper, our main interest in wedge metrics is that they can be used to construct simple examples of $\QFB$-metrics as described in \cite[\S~1]{KR1}.  Indeed, let 
\begin{equation}
     c_i: H_i\times [0,\delta_i)\to M
\label{qfb.7}\end{equation}
be a collar neighborhood of $H_i$ compatible with the boundary defining functions in the sense that $c_i^*x_i$ corresponds to the projection $H_i\times [0,\delta_i)\to [0,\delta_i)$ and $c^*_ix_j$ is the pull-back of a function on $H_i$ for $j\ne i$.  Instead of the level sets of $x_i$, we can use the level sets of the total boundary defining function $v$, that is, consider the open set
\begin{equation}
   \cU_i=\{ (p,\tau)\in H_i\times [0,\delta_i) \; | \; \Pi_{j\ne i}  x_j(p)> \frac{\tau}{\delta_i}\subset H_i\times [0,\delta_i)
\label{qfb.8}\end{equation}
with natural diffeomorphism 
\begin{equation}
\begin{array}{llcl}
   \psi_i: & H_i\setminus \pa H_i\times [0,\delta_i) & \to & \cU_i \\
               & (p,t) & \mapsto & (p, t\prod_{j\ne i} x_j(p)).
\end{array}
\label{qfb.9}\end{equation}

On $\cU_i$ seen as a subset of $H_i\times [0,\delta_i)$, let $\pr_1$ and $\pr_2$ be the restrictions of the projections of $H_i\times [0,\delta_i)$ onto $H_i$ and $[0,\delta_i)$.  Choose a connection for the fiber bundle $\phi_i: H_i\to S_i$.  On $S_i\setminus \pa S_i$, let $g_{S_i}$ be a wedge metric compatible with the iterated fibration structure of $S_i$.  Let $\kappa_i$ be a family of fiberwise $\QFB$-metrics in the fibers of $\phi_i:H_i\to S_i$.  Using the connection of $\phi_i$, this can be lifted to a vertical symmetric $2$-tensor on $H_i\setminus \pa H_i$.  In $\cU_i$, still seen as a subset of $H_i\times [0,\delta_i)$, an example of $\QFB$-metric is then given by 
\begin{equation}
      g_{\QFB}= \frac{d\tau^2}{\tau^4}+ \frac{\pr_1^*\phi_i^*g_{S_i}}{\tau^2}+ \pr_1^*\kappa_i.
\label{qfb.10}\end{equation} 
When the fiber bundle $\phi_i: H_i\to S_i$ is trivial with $H_i=S_i\times Z_i$ and $\kappa_i=g_{Z_i}$  is a constant family of $\QFB$-metrics in the fiber $Z_i$, the example \eqref{qfb.10} corresponds to a Cartesian product of the $\QFB$-metric $g_{Z_i}$ and the Riemannian cone 
$$
       \frac{d\tau^2}{\tau^4}+ \frac{g_{S_i}}{\tau^2}
$$  
with cross-section $(S_i\setminus\pa S_i, g_{S_i})$.  One important subtlety is that $\cU_i$ is only a \textbf{subset} of this Cartesian product.  In particular, for fixed $\tau$ and $s\in S_{i}\setminus \pa S_i$, we only consider the metric $g_{Z_i}$ in the region of $Z_i$ where $\prod_{j\ne i}x_j>\frac{\tau}{\delta_i}$.  Still, a $\QFB$-metric of the form \eqref{qfb.10} near $H_i$ is said to be of \textbf{product-type} near $H_i$.  More generally, an exact $\QFB$-metric is a $\QFB$-metric which is product-type near $H_i$ up to a term in $x_i\CI(M;S^2({}^{\QFB}T^*M))$ for each boundary hypersurface $H_i$ of $M$.  

Coming back to the model \eqref{qfb.10}, on an open set $B_i\subset S_i\setminus \pa S_i$ over which the fiber bundle $\phi_i$ is trivial, an example of $\QFB$-metric on $\cU_i\cap (\phi_i^{-1}(B_i)\times [0,\delta_i))$ is given by the restriction of the Cartesian product 
\begin{equation}
   \frac{d\tau^2}{\tau^4}+ \frac{g_{B_i}}{\tau^2}+ g_{Z_i}
\label{qfb.11}\end{equation} 
with $g_{B_i}$ a Riemannian metric on $B_i$ and $g_{Z_i}$ a $\QFB$-metric on $Z_i$.  By the Ehresmann lemma of \cite[Corollary~A.6]{KR3}, a $\QFB$-metric is always locally quasi-isometric to a model of the form \eqref{qfb.11}.  Since $Z_i$ is of lower depth, one can use \eqref{qfb.11} to define $\QFB$-metrics iteratively.  This is how the subclass of $\QAC$-metrics was originally introduced in \cite{DM2018}.  

For the computation of weighted $L^2$-cohomology of a $\QFB$-metric in the next section, the local model \eqref{qfb.11} is essentially all that we will use about $\QFB$-metrics.  However, to obtain results about the reduced $L^2$-cohomology of $\QFB$-metrics, we will need to invoke a result about the decay of harmonic forms obtained in our companion paper \cite{KR1}.  For the convenience of the reader, let us describe in details the specific result of \cite{KR1} that we will use.  

Suppose now that $g_{\QFB}$ is an exact $\QFB$-metric and let $\eth_{\QFB}$ be the corresponding Hodge-deRham operator.  If $H_i$ is maximal, the fibered of $\phi_i: H_i\to S_i$ are closed manifolds, so by Hodge theory and \cite[Proposition~15]{HHM2004}, the space of fiberwise harmonic forms on $\phi_i:H_i\to S_i$ with respect to the fiberwise metric $\kappa_i$ form a flat vector bundle $\cH^*_{L^2}(H_i/S_i)$ over $S_i$.  Let $\mathfrak{d}_{S_i}$ be the Hodge-deRham operator associated to the metric $g_{S_i}$ and acting on $\cH^*_{L^2}(H_i/S_i)$-valued forms on $S_i\setminus \pa S_i$.  If $H_i$ is not maximal, the fibers of $\phi_i: H_i\to S_i$ are instead  manifolds with fibered corners and $\kappa_i$ is a family of $\QFB$-metrics.  Suppose that the fiberwise reduced $L^2$-cohomology is finite dimensional.  Thanks to \cite[Theorem~1]{HHM2004}, the corresponding space of fiberwise $L^2$-harmonic forms on $\phi_i: H_i\to S_i$ still form a flat vector bundle $\cH^*_{L^2}(H_i/S_i)$ over $S_i$.  Thus, we can still consider the Hodge-deRham operator $\mathfrak{d}_{S_i}$ associated now to a wedge metric $g_{S_i}$ and acting on $\cH^*_{L^2}(H_i/S_i)$-valued forms on $S_i\setminus \pa S_i$.  For $H_j<H_i$ and for each fiber $Z_{ij}$ of $\phi_{ij}: S_{ij}\to S_j$ let $\mathfrak{d}_{Z_{ij}}$ be the corresponding Hodge-deRham operator acting on forms taking values in $\cH^*_{L^2}(H_i/S_i)$ and associated to the wedge metric $g_{Z_{ij}}$ induced by $g_{S_i}$.  Let also $\mathfrak{P}_j$ be the subset of degrees where the fibers of $\phi_j: H_j\to S_j$ have non-trivial reduced $L^2$-cohomology and let
$$
  \cP_{ij}: \CI(Z_{ij}; \Lambda^*({}^{w}T^*(Z_{ij}))\otimes \cH^*_{L^2}(H_i/S_i))\to \CI(Z_{ij}; \Lambda^*({}^{w}T^*(Z_{ij}))\otimes \cH^*_{L^2}(H_i/S_i))
$$  
be the projection on sections of total degree $q$ (that is, the sum of the degree in the $\Lambda^*({}^{w}T^*(Z_{ij}))$ factor  and the degree in the $ \cH^*_{L^2}(H_i/S_i)$ is equal to $q$ ) such that $q$ or $q+1$ are in $\mathfrak{P}_j$.

\begin{theorem}[Theorem~17.5 in \cite{KR1}]
Let $g_{\QFB}$ be a $\QFB$-metric on $(M,\phi)$ with respect to a total boundary defining function $v$ which is product-type near $H_i$ up to a term in $x_i^2\CI(M;S^2({}^{\QFB}T^*M))$ for each boundary hypersurface $H_i$ of $M$.  For each boundary hypersurface $H_i$, suppose that the fiberwise reduced $L^2$-cohomology in the fibers of $\phi_i: H_i\to S_i$ is finite dimensional, so that the corresponding fiberwise $L^2$-harmonic forms yield a flat vector bundle $\cH^*_{L^2}(H_i/S_i)\to S_i$.  For each boundary hypersurface $H_i$ and for each $H_j\le H_i$, suppose furthermore that 
\begin{equation}
               \left|q-\frac{\dim Z_{ij}}2\right|\le 1 \quad \Longrightarrow \quad \mathfrak{d}_{Z_{ij}} \;\mbox{has trivial $L^2$-kernel in degree $q$}, 
\label{qfb.12a}\end{equation}
where we use the convention that $Z_{ii}:=S_i$.  If $H_i$ is maximal and $H_j$ is submaximal with $H_j<H_i$, then \eqref{qfb.12a} can in fact be replaced by the weaker condition
\begin{equation}
               \left|q-\frac{\dim Z_{ij}}2\right|< 1 \quad \Longrightarrow \quad \mathfrak{d}_{Z_{ij}} \;\mbox{has trivial $L^2$-kernel in degree $q$}.
\label{qfb.12aa}\end{equation}

For $H_j<H_i$, suppose also that
\begin{equation}
\left|q-\frac{\dim Z_{ij}}2\right|\le \frac32 \quad \Longrightarrow \quad  \cP_{ij}(\ker_{L^2_w}\mathfrak{d}_{Z_{ij}})_q \;\mbox{is trivial},
\label{qfb.12b}\end{equation}  
where $(\ker_{L^2_w}\mathfrak{d}_{Z_{ij}})_q$ is the L$^2$-kernel of $\mathfrak{d}_{Z_{ij}}$ for forms of degree $q$ in the $\Lambda^*({}^{w}T^*(Z_{ij}))$ factor.  Then there exists a $\QFB$-metric $\widetilde{g}_{\QFB}$ with respect to the same total boundary defining function $v$ such that its space of $L^2$-harmonic forms is finite dimensional and contained in 
$$
     v^{\epsilon} L^2\Omega^*(M\setminus \pa M,\widetilde{g}_{\QFB})
$$
for some $\epsilon>0$.  
\label{qfb.12}\end{theorem}
\begin{remark}
Since reduced $L^2$-cohomology only depends on the quasi-isometry class of the metric, see for instance \eqref{wl2.8new} below, when Theorem~\ref{qfb.12} applies, the space of $L^{2}$-harmonic forms is finite dimensional, and after possibly changing the $\QFB$-metric $g_{\QFB}$, we can always assume that it is contained in 
$$
     v^{\epsilon} L^2\Omega^*(M\setminus \pa M,g_{\QFB})
$$
for some $\epsilon>0$.  
\label{afb.12new}\end{remark}

As explained in \cite{KR1}, this yields the following two corollaries.
\begin{corollary}[Corollary~17.7 in \cite{KR1}]
Let $g_{\QFB}$ be an exact $\QFB$-metric on $(M,\phi)$ with respect to a total boundary defining function $v$.  Whenever the fibers of $\phi_i: H_i\to S_i$ have non-trivial reduced $L^2$-cohomology with respect to the induced $\QFB$-metrics, suppose that the stratified space $\widehat{S}_i$ corresponding to $S_i$ is a quotient of a sphere by a finite group of isometries, that $\dim S_i\ge 3$ and that $\dim Z_{ij}=\dim S_i-\dim S_j-1>3$ for each boundary hypersurface $H_j<H_i$.  Then there  exists a $\QFB$-metric $\widetilde{g}_{\QFB}$ with respect to the same total boundary defining function $v$ such that its space of $L^2$-harmonic forms is finite dimensional and contained in 
$$
     v^{\epsilon} L^2\Omega^*(M\setminus \pa M,\widetilde{g}_{\QFB})
$$
for some $\epsilon>0$.  
\label{qfb.12c}\end{corollary}

\begin{corollary}[Corollary~17.8 in \cite{KR1}]
Let $g_{\QFB}$ be an exact $\QFB$-metric on $(M,\phi)$ with respect to a total boundary defining function $v$.  Suppose that, except possibly in middle degree, the fibers of $\phi_i: H_i\to S_i$ have trivial reduced $L^2$-cohomology with respect to the induced $\QFB$-metrics.  When it is non-trivial in middle degree, suppose that the 
stratified space $\widehat{S}_i$ corresponding to $S_i$ is a quotient of a sphere by a finite group of isometries, that $\dim S_i\ge 3$ and that $\dim Z_{ij}=\dim S_i-\dim S_j-1\ge 3$ for each boundary hypersurface $H_j<H_i$.
Then there  exists a $\QFB$-metric $\widetilde{g}_{\QFB}$ with respect to the same total boundary defining function $v$ such that its space of $L^2$-harmonic forms is finite dimensional and contained in 
$$
     v^{\epsilon} L^2\Omega^*(M\setminus \pa M,\widetilde{g}_{\QFB})
$$
for some $\epsilon>0$.   
\label{qfb.12d}\end{corollary}

\section{Weighted $L^2$-cohomology of $\QFB$-metrics} \label{wl2.0}

 Let $(M,\phi)$ be a manifold with fibered corners.  If $H_1,\ldots, H_{\ell}$ are the boundary hypersurfaces of $M$, let $x_1,\ldots, x_{\ell}$ be corresponding boundary defining functions compatible with $\phi$  in the sense of \cite{CDR}.  Suppose also that the labelling of the boundary hypersurfaces is compatible with the partial order in the sense that
$$
     H_i< H_j \; \Longrightarrow \; i<j.
$$ 
On $(M,\phi)$, let $g_{\QFB}$ be a $\QFB$-metric with respect to the boundary defining functions $x_1,\ldots, x_{\ell}$.  
\begin{definition}
  A \textbf{quasi-fibered cusp metric} ($\QFC$-metric for short) is a Riemannian metric on $M\setminus \pa M$ of the form 
  \begin{equation}
   g_{\QFC}:= v^2 g
  \label{wl2.2}\end{equation}
  for some $\QFB$-metric $g$, where $v:= \prod_{i=1}^{\ell} x_i$ is the total boundary defining function of $(M,\phi)$.  When $g$ is in fact a $\QAC$-metric, such a metric is also called a \textbf{quasi-asymptotically cylindrical metric} ($\QCyl$-metric for short).
\label{wl2.1}\end{definition}

\begin{example}
When $M$ is a manifold with boundary, a $\QFC$-metric is just a fibered cusp metric in the sense of \cite{HHM2004}, while a $\QCyl$-metric is just a $b$-metric in the sense of \cite{MelroseAPS}.
\label{wl2.3}\end{example}

In a sense, the notion of $\QFC$-metric can be seen as a generalization of the notion of fibered cusp metrics to manifolds with fibered corners of arbitrary depth.  Notice however that $\QFC$-metrics differ fundamentally from the notion of iterated fibered cusp metrics of \cite{DLR,HR}, which are yet another way of generalizing fibered cusps metrics to manifolds with fibered corners of higher depth.  Similarly, $\QCyl$-metrics and the $\Qb$-metrics introduced in \cite{CDR} can both be seen as a generalization of the notion of $b$-metrics (or asymptotically cylindrical metrics) to certain manifolds with fibered  corners of higher depth, though in a different way.  Indeed, as we will see, $\QCyl$-metrics correspond more to a geometric generalization, at least in terms of $L^2$-cohomology, while $\Qb$-metrics correspond more to an analytic generalization in the sense that  it is for this type of metrics that the b-calculus of Melrose is  generalized in \cite{KR1}.  

Before discussing the weighted $L^2$-cohomology of $\QFB$-metrics and $\QAC$-metrics, let us first recall what is the weighted $L^2$-cohomology of a complete Riemannian manifold $(X,g)$ with weight $w\in \CI(X)$ a positive function.  In terms of the space of weighted $L^2$-forms of degree $q$
\begin{equation}
  wL^2\Omega^q(X,g)= \{\omega \; \mbox{a mesurable section of}\; \Lambda^q(T^*X) \; | \; \int_X \| w^{-1} \omega\|^2_g dg <\infty\},
\label{wl2.4}\end{equation}
the weighted $L^2$-cohomology group of degree $q$ is the quotient
\begin{equation}
  \WH^q(X,g,w)=  \frac{\{\omega \in wL^2\Omega^q(X,g)\; | \; d\omega=0\}}{\{d\eta \; | \; \eta\in wL^2\Omega^{q-1}(X,g) \; \mbox{such that} \; d\eta\in wL^2\Omega^q(X,g)\}}.
\label{wl2.5}\end{equation}  
If $w=1$, then this is just the $L^2$-cohomology group of degree $q$
\begin{equation}
  H^q_{(2)}(X,g)= \frac{\{\omega\in L^2\Omega^q(X,g)\; | \; d\omega=0\}}{\{d\eta \; | \; \eta\in L^2\Omega^{q-1}(X,g) \; \mbox{such that} \; d\eta\in L^2\Omega^q(X,g)\}}.
\label{wl2.6}\end{equation}
Considering the subset of weighted $L^2$-forms
\begin{equation}
   wL^2_d\Omega^q(X,g):= \{ \omega\in wL^2\Omega^q(X,g)\; | \; d\omega\in wL^2\Omega^{q+1}(X,g) \},
\label{wl2.7}\end{equation}
notice that the groups \eqref{wl2.5} correspond to the cohomology groups of the complex
\begin{equation}
\xymatrix{
     \cdots \ar[r]^-{d} & wL^2_d\Omega^q(X,g) \ar[r]^-{d} &  wL^2_d\Omega^{q+1}(X,g) \ar[r]^-{d} & \cdots.
}
\label{wl2.8}\end{equation}
Since the image of the exterior derivative is not necessarily closed, it is often interesting to consider as well the reduced weighted $L^2$-cohomology group of degree $q$
\begin{equation}
     \overline{\WH}^q(X,g,w):= \{\omega\in wL^2\Omega^q(X,g)\; | \; d\omega=0\} /\overline{\{d\eta \; | \; \eta\in wL^2\Omega^{q-1}(X,g) \; \mbox{such that} \; d\eta\in wL^2\omega^q(X,g)\}}.\label{wl2.9}\end{equation}
When $w=1$, this is just the reduced $L^2$-cohomology group of degree $q$
\begin{equation}
 \overline{H}^q_{(2)}(X,g):= \{\omega\in L^2\Omega^q(X,g)\; | \; d\omega=0\} /\overline{\{d\eta \; | \; \eta\in L^2\Omega^{q-1}(X,g) \; \mbox{such that} \; d\eta\in L^2\omega^q(X,g)\}}.
 \label{wl2.8new}\end{equation}  
Now, if $\delta_{g,w}$ is the formal adjoint of $d$ with respect to the inner product on $wL^2\Omega^*(X,g)$, then $\overline{\WH}^q(X,g,w)$ is naturally identified with the space of weighted $L^2$-harmonic forms 
\begin{equation}
  L^2\cH^q(X,g,w):= \{ \omega\in wL^2\Omega^q(X,g) \; | \; d\omega= \delta_{g,w} \omega=0\}.  
\label{wl2.10}\end{equation}
Indeed, the isomorphism 
$$
               \overline{\WH}^q(X,g,w)\cong L^2\cH^q(X,g,w)
$$
follows readily from the Kodaira decomposition \cite[Théorème~24 in \S~32]{deRham}
\begin{equation}
  wL^2\Omega^q(X,g)= L^2\cH^q(X,g,w) \oplus \overline{d \Omega^{q-1}_c(X)} \oplus \overline{\delta_{g,w}\Omega_c^{q+1}(X)},
\label{wl2.10}\end{equation}
where $\Omega_c^q(X)$ denotes the space of compactly supported smooth $q$-forms.  Notice that \eqref{wl2.5}, \eqref{wl2.6} and \eqref{wl2.9} only depend on the quasi-isometry class of $g$.  That is, if $g'$ is another complete metric such that for some positive constant $C$,
$$
         \frac{g}{C}< g' < Cg  \quad \mbox{everywhere on} \; X,
$$
then $g'$ has the same weighted $L^2$-cohomology groups or reduced $L^2$-cohomology groups as $g$.

Now, if we take $X= M\setminus \pa M$ and $g= g_{\QFC}$ a $\QFC$-metric, a natural choice for the weight function is to take 
$$
            w=x^a= x_1^{a_1}\cdots x_{\ell}^{a_{\ell}}
$$
for some $a=(a_1,\ldots, a_{\ell})\in \bbR^{\ell}$ and consider the corresponding weighted $L^2$-cohomology group
\begin{equation}
  \WH^q_{\QFC}(M,\phi,a):= \WH^q(M\setminus \pa M, g_{\QFC}, x^a).
\label{wl2.11}\end{equation}

For a $\QFB$-metric  $g_{\QFB}$, following \cite[(13)]{HHM2004} for the case of fibered boundary metrics, we will consider the weighted $L^2$-cohomology group
\begin{equation}
  \WH^q_{\QFB}(M,\phi,a):= \frac{\{\omega\in x^aL^2\Omega^q(M\setminus \pa M, g_{\QFB}) \; | d\omega=0\}}{\{ d\eta \; | \; \eta\in v^{-1}x^a L^2\Omega^{q-1}(M\setminus \pa M, g_{\QFB}), \; d\eta \in x^aL^2\Omega^q(M,g_{\QFB}) \}},
\label{wl2.12}\end{equation}
where we recall that $v= \prod_{i=1}^{\ell}x_i$ is a total boundary defining function of $M$.  Since by \eqref{wl2.2}, we have that 
\begin{equation}
x^aL^2\Omega^q(M\setminus \pa M,g_{\QFB})= v^{\frac{m}2-q}x^a L^2\Omega^q(M\setminus \pa M, g_{\QFC}),
\label{wl2.13}\end{equation}
where $m=\dim M$, we see that \eqref{wl2.12} can be reformulated in terms of $\QFC$-metrics as follows,
\begin{equation}
  \WH^q_{\QFB}(M,\phi,a)= \WH^q_{\QFC}(M,\phi, a+ \underline{\left(\frac{m}2-q\right)}),
\label{wl2.14}\end{equation}
where 
$$
   a+ \underline{\left(\frac{m}2-q\right)}:= \left( a_1+ \left(\frac{m}2-q\right),\ldots, a_{\ell}+ \left(\frac{m}2-q\right)\right).
 $$

Instead of working with general weighted $L^2$-forms, it will be sometimes convenient to work with weighted $\QFB$-conormal forms, especially for arguments involving integration by parts.  More precisely, consider the space
\begin{multline}
  x^a\cA_{\QFC,2}\Omega^q(M):=  \{  \omega\in x^aL^2\Omega^q(M\setminus \pa M,g_{\QFC}) \; | \;  \\
  \forall k\in \bbN_0, \forall X_1,\ldots X_k\in \cV_{\QFB}(M), \quad \nabla_{X_1}\cdots\nabla_{X_k} \omega\in x^aL^2\Omega^q(M\setminus \pa M, g_{\QFC})\},
\label{wl2.15}\end{multline}
with $\nabla$ a choice of covariant derivative for $\Lambda^q({}^{\QFC}T^*M)\to M$, where
\begin{equation}
    {}^{\QFC}T^* M= v({}^{\QFB}T^*M)
\label{wl2.16}\end{equation}
is the $\QFC$-cotangent bundle.  
\begin{lemma}
The weighted $L^2$-cohomology group \eqref{wl2.11} can be written in terms of weighted $\QFB$-conormal forms as
\begin{equation}
  \WH^q_{\QFC}(M,\phi,a):= \frac{\{\omega\in x^a\cA_{\QFC,2}\Omega^q(M)\; | \; d\omega=0\}}{\{d\eta \; | \; \eta\in x^a \cA_{\QFC,2}\Omega^{q-1}(M), \; d\eta\in x^a\cA_{\QFC,2}\Omega^q(M)\}}.
\label{wl2.18}\end{equation}
\label{wl2.17}\end{lemma}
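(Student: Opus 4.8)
The plan is to show that the inclusion of complexes $x^a\cA_{\QFC}\Omega^*(M) \hookrightarrow x^aL^2_d\Omega^*(M\setminus\pa M, g_{\QFC})$ is a quasi-isomorphism, i.e. that it induces an isomorphism on cohomology. Surjectivity on cohomology amounts to saying that every closed weighted $L^2$-form is cohomologous, within the full $L^2$-complex, to a conormal representative; injectivity means that if a conormal closed form becomes exact over $L^2_d$, it is already exact over conormal forms. Both of these are consequences of the existence of a good parametrix for the Hodge Laplacian $\Delta_{g_{\QFC}}$ (equivalently, after conformal rescaling, for the $\QFB$-Hodge Laplacian) that preserves conormality.

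\textbf{Step 1: a parametrix for the Hodge Laplacian.} Using the pseudodifferential calculus for Lie structures at infinity of \cite{ALN2007} applied to the $\QFB$-Lie algebroid, I would construct a pseudodifferential operator $Q$ such that $\Delta_{g_{\QFC}} Q = \Id - R_1$ and $Q\Delta_{g_{\QFC}} = \Id - R_2$ with $R_1, R_2$ smoothing operators in the calculus. The key regularity property I need is that such operators map $x^aL^2\Omega^*(M\setminus\pa M, g_{\QFC})$ into $x^a\cA_{\QFC}\Omega^*(M)$ (smoothing residual terms are conormal, and $Q$ preserves conormality up to the order it gains); this is where a "soft" parametrix suffices since I do not need Fredholmness here, only the mapping property. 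Conjugating by the conformal factor $v$ translates between the $\QFC$ and $\QFB$ pictures as in \eqref{wl2.13}, so I can equivalently work with whichever Laplacian is more convenient.

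\textbf{Step 2: Hodge-type decomposition at the level of conormal forms.} Given a closed $\omega\in x^aL^2_d\Omega^q$, write $\omega = \Delta_{g_{\QFC}} Q\omega + R_1\omega = d(\delta Q\omega) + \delta(dQ\omega) + R_1\omega$. Since $\omega$ is closed and $\delta$ is the formal adjoint, the middle term contributes a term whose $d$ is controlled, and one arranges (using $d\omega = 0$ and standard elliptic regularity inside $M\setminus\pa M$ together with the parametrix) that $\omega - d\eta$ is conormal for $\eta = \delta Q\omega$, which lies in the appropriate weighted conormal space — here one must check the weight bookkeeping, namely that $\delta Q$ maps $x^aL^2\Omega^q$ into $x^a\cA_{\QFC}\Omega^{q-1}$, which follows because $\delta$ maps $\QFC$-forms to $\QFC$-forms and $Q$ gains two $\QFB$-derivatives. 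This shows every $L^2_d$-cohomology class has a conormal representative, giving surjectivity. For injectivity, if $\omega\in x^a\cA_{\QFC}\Omega^q$ is conormal and closed with $\omega = d\eta$, $\eta\in x^aL^2_d\Omega^{q-1}$, apply the same decomposition to $\eta$: write $\eta = d(\delta Q\eta) + \delta(dQ\eta) + R_2\eta$; since $d\eta = \omega$ is conormal, $\delta Q(d\eta) = \delta Q\omega$ is conormal, and one extracts that $\omega = d\eta' $ with $\eta' = \delta Q(d\eta) + R_2\eta$-type terms conormal. Care is needed to ensure all intermediate forms lie in the weighted conormal spaces and that no weight is lost.

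\textbf{Main obstacle.} The delicate point is the interplay between the weight $x^a = x_1^{a_1}\cdots x_\ell^{a_\ell}$ and the parametrix: a priori $Q$ need not exactly preserve the weight line bundle $x^a\Lambda^*({}^{\QFC}T^*M)$, and on a manifold with corners the smoothing residual terms $R_i$ live in a calculus whose kernels are polyhomogeneous on a blown-up double space, so one must verify that conjugating by $x^a$ leaves the calculus invariant (it does, since $x^a$ is a conormal positive function, and conjugation by it preserves $\cV_{\QFB}(M)$ up to bounded zeroth-order terms) and that $R_i$ indeed lands in $x^a\cA_{\QFC}$. The other subtlety is verifying that $\delta_{g_{\QFC}} Q$ sends $L^2$ into the conormal space with the \emph{same} weight $x^a$ and not merely $v^{-1}x^a$ or similar; this is precisely the reason the definition \eqref{wl2.12} of $\WH^q_{\QFB}$ uses the enlarged space $v^{-1}x^aL^2$ for the primitive $\eta$, so I expect the correct statement to come out cleanly once the mapping properties of $Q$ are pinned down. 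A clean way to package all of this is to observe, as in \cite{HHM2004}, that it suffices to check the quasi-isomorphism locally on the local models \eqref{qfb.11}, where the parametrix can be built by hand from the one-dimensional models on the half-line and Künneth-type arguments, and then globalize via a Mayer--Vietoris / partition-of-unity argument.
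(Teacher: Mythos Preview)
Your overall strategy matches the paper's: build a soft parametrix for the Hodge Laplacian in the $\QFB$-calculus and use it to replace $L^2_d$-forms by conormal ones. However, Step~2 contains a genuine gap. You assert that ``$\delta Q$ maps $x^aL^2\Omega^q$ into $x^a\cA_{\QFC}\Omega^{q-1}$,'' justified by ``$Q$ gains two $\QFB$-derivatives.'' But gaining two derivatives only lands you in a Sobolev space $H^2_{\QFB}$, not in the conormal space $\cA_{\QFC}$, which requires control of \emph{all} $\QFB$-derivatives. So for $\omega$ merely in $x^aL^2$, the primitive $\eta=\delta Q\omega$ is not conormal, and neither is it obvious that $\omega-d\eta=\delta(dQ\omega)+R_1\omega$ is.

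The fix --- and this is exactly what the paper does --- is a \emph{second} application of the parametrix, now on the left. One sets $\eta:=d^*d\,v^2Q_1\omega-R_1\omega$ (so $\omega-\eta$ is $L^2_d$-exact and $d\eta=d\omega$), computes $(d+d^*)\eta=d\omega-d^*R_1\omega$, and then applies the left parametrix relation $Q_2v^2(d+d^*)^2=\Id+R_2$ to obtain
\[
\eta=Q_2v^2d^*d\omega-Q_2v^2dd^*R_1\omega-R_2\eta.
\]
Each term on the right is conormal: the first because $d\omega$ is conormal by hypothesis and $Q_2v^2d^*\in\Psi^{-1}_{\QFB}$ preserves conormality; the second and third because $R_1,R_2\in\Psi^{-\infty}_{\QFB}$ map $x^aL^2$ into $x^a\cA_{\QFC}$. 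The paper phrases this as a single statement (any $\omega\in x^aL^2_d$ with $d\omega$ conormal is $L^2_d$-cohomologous to a conormal form with the same differential), which yields surjectivity (take $d\omega=0$) and injectivity (apply it to the primitive) simultaneously. Your closing suggestion to localize and use Mayer--Vietoris is a different route the paper does not take and is unnecessary once the global parametrix argument is executed correctly.
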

\begin{proof}
Let $d^*=\delta_{g_{\QFC},0}$ be the formal adjoint of $d$ with respect to some choice of $\QFC$-metric $g_{\QFC}$.  Then the Hodge Laplacian $(d+d^*)^2$ is such that 
$v^2(d+d^*)^2$ and $(d+d^*)^2v^2$ are elliptic $\QFB$-operators of order 2.  Thus, using the small pseudodifferential calculus of \cite{KR1} (or even the calculus of pseudodifferential operators with proper support of \cite{ALN2007}), we know by standard arguments that  there exist operators $Q_1,Q_2\in \Psi^{-2}_{\QFB}(M;\Lambda^*({}^{\QFC}T^*M))$ preserving the form degree such that 
\begin{equation}
(d+d^*)^2v^2Q_1= \Id+ R_1, \quad Q_2 v^2(d+d^*)^2= \Id + R_2,  \quad \mbox{with} \;  R_1,R_2\in \Psi^{-\infty}_{\QFB}(M;\Lambda^*({}^{\QFC}T^*M)).
\label{wl2.19}\end{equation}
Now, to prove the lemma, we need to show the natural map
\begin{equation}
 \frac{\{\omega\in x^a\cA_{\QFC,2}\Omega^q(M)\; | \; d\omega=0\}}{\{d\eta \; | \; \eta\in x^a \cA_{\QFC,2}\Omega^{q-1}(M), \; d\eta\in x^a\cA_{\QFC,2}\Omega^q(M)\}}\longrightarrow  \WH^q_{\QFC}(M,\phi,a)
\label{wl2.19new}\end{equation}
is an isomorphism.  To show it is surjective, we need to show that any class in $\WH^q_{\QFC}(M,\phi,a)$ can be represented by a closed form in $x^a\cA_{\QFC,2}\Omega^q(M)$, while to show it is injective, we need to show that given any form $\omega\in x^a L^2_d\Omega^{q-1}(M\setminus\pa M, g_{\QFC})$ with $d\omega\in x^a\cA_{\QFC,2}\Omega^q(M)$, there exists $\eta\in x^a\cA_{\QFC,2}\Omega^{q-1}(M)$ such that $d\eta=d\omega$.  Clearly, as we allow $q$ to vary, both assertions will follow if we can show that given $\omega\in x^a L^2_d\Omega^q(M\setminus\pa M, g_{\QFC})$ such that $d\omega\in x^a\cA_{\QFC,2}\Omega^{q+1}(M)$, there exists $\eta \in x^a\cA_{\QFC,2}\Omega^{q}(M)$ and $\psi\in x^a L^2_d\Omega^{q-1}(M\setminus\pa M, g_{\QFC}) $ such that
$$
      \eta=\omega+d\psi \quad \mbox{and} \quad d\eta=d\omega.
$$

Thus, let $\omega\in x^a L^2_d\Omega^q(M\setminus\pa M, g_{\QFC})$ be such that $d\omega\in x^a\cA_{\QFC,2}\Omega^{q+1}(M)$.  Applying \eqref{wl2.19} to $\omega$, we see that 
$$
     (dd^*+d^*d)v^2Q_1\omega= \omega+ R_1\omega,
$$
so that 
$$
    \eta:= d^*dv^2Q_1\omega-R_1\omega
$$
differs from $\omega$ by an exact form, namely
$$
    \eta=\omega+d\psi 
$$
with $\psi= -d^*(v^2Q_1\omega)\in x^a L^2_d\Omega^{q-1}(M\setminus\pa M, g_{\QFC})$ by the mapping properties of \cite[Theorem~7.4]{KR1}.
 Hence, since $d\eta=d\omega$, we see that 
\begin{equation}
\begin{aligned}
(d+d^*)\eta= d\omega+ d^*\eta =  d\omega-d^*R_1\omega & \Longrightarrow \quad Q_2v^2(d+d^*)^2\eta= Q_2v^2(d+d^*)(d\omega-d^*R_1\omega), \\
         & \Longrightarrow \quad \eta+ R_2\eta= Q_2v^2d^*d\omega-Q_2v^2dd^*R_1 \omega, \\
         & \Longrightarrow \quad \eta= Q_2v^2d^*d\omega -Q_2v^2dd^*R_1\omega- R_2\eta.  
\end{aligned}
\label{wl2.20}\end{equation}
By our assumption and the mapping properties of $\QFB$-operators \cite[Corollary~7.5]{KR1}, 
$$
Q_2v^2d^*d\omega\in x^a\cA_{\QFC,2}\Omega^q(M).
$$   
Moreover, since $Q_2v^2dd^*R_1$ and $R_2$ are operators in $\Psi^{-\infty}_{\QFB}(M;\Lambda^*({}^{\QFC}T^*M))$, we deduce, using the $L^2$-boundedness of $\QFB$-operators of order zero \cite[Theorem~7.4]{KR1} and the fact  that $\QFB$-operators of order $-\infty$ are stable under the action on the left by $\QFB$-vector fields \cite[Corollary~5.2]{KR1},  that 
$$
   \eta\in x^a \cA_{\QFC,2}\Omega^q(M),  \quad d\eta\in x^a \cA_{\QFC,2}\Omega^{q+1}(M),
$$
completing the proof.
\end{proof}

Let $\widehat{M}_{\phi}$ be the Thom-Mather stratified space associated to $(M,\phi)$.  Recall that this space is essentially obtained by collapsing the fibers of $\phi_i: H_i\to S_i$ onto its image for each $i$.  There is in particular a canonical surjective map 
$$
     c_{\phi}: M\to \widehat{M}_{\phi}
$$ 
sending $H_i$ onto a corresponding closed stratum $\overline{s}_i$ in $\widehat{M}_{\phi}$.  On $\widehat{M}_{\phi}$, we can consider for $a=(a_1,\ldots, a_{\ell})\in \bbR^{\ell}$ the sheaf $x^aL^2_{\QFC}\Omega^q$ associated to the presheaf  $x^a\cL^2_{\QFC}\Omega^q$ which to an open set $\cU\subset \widehat{M}_{\phi}$  associates the space of sections
\begin{equation}
  x^a \cL^2_{\QFC}\Omega^q(\cU):=  x^aL^2\Omega^q(c_{\phi}^{-1}(\cU)\setminus (c^{-1}_{\phi}(\cU)\cap\pa M),g_{\QFC}).
\label{wl2.21}\end{equation}
In particular, notice that the space of sections $x^aL^2_{\QFC}\Omega^q(M\setminus\pa M)$ of the sheaf $x^aL^2_{\QFC}\Omega^q$ over $M\setminus \pa M$ corresponds to $q$-forms on $M\setminus\pa M$ that are locally in $L^2_{\QFC}$, while on $\widehat{M}_{\phi}$,
$$
x^a L^2_{\QFC}\Omega^q(\widehat{M}_{\phi}):=  x^aL^2\Omega^q(M\setminus\pa M),g_{\QFC}).
$$
   Let us consider as well as the subsheaf $x^a L^2_{\QFC,d}\Omega^q$ defined by
\begin{equation}
  x^a L^2_{\QFC,d}\Omega^q(\cU):= \{\omega\in x^a L^2_{\QFC}\Omega^q(\cU) \; | \; d\omega \in x^a L^2_{\QFC}\Omega^{q+1}(\cU)\}.  
\label{wl2.22}\end{equation}
This defines a complex 
\begin{equation}
\xymatrix{
   \cdots\ar[r]^-{d} & x^a L^2_{\QFC,d}\Omega^q(\cU) \ar[r]^-{d} & x^a L^2_{\QFC,d}\Omega^{q+1}(\cU) \ar[r]^-{d} & \cdots }.
\label{wl2.23}\end{equation}
In particular, the weighted $L^2$-cohomology groups $\WH^*_{\QFC}(M,\phi,a)$ correspond to the cohomology groups of this complex when we take $\cU= \widehat{M}_{\phi}$.  However, taking advantage of Lemma~\ref{wl2.17}, we can instead work with $\QFB$-conormal forms.  That is, on $\widehat{M}_{\phi}$, we can consider instead for $a=(a_1,\ldots, a_{\ell})\in \bbR^{\ell}$ the sheaf $x^a\cA_{\QFC,2}\Omega^q$ associated to the presheaf $x^a\cL_{\QFC,2}\Omega^q$ which to to an open set $\cU\subset \widehat{M}_{\phi}$ associates 
\begin{multline}
  x^a \cL_{\QFC,2}\Omega^q(\cU):=  \left\{  \omega\in x^aL^2\Omega^q(c_{\phi}^{-1}(\cU)\setminus (c^{-1}_{\phi}(\cU)\cap\pa M),g_{\QFC}) \; | \right. \\
  \forall k\in \bbN_0, \forall X_1,\ldots X_k\in \cV_{\QFB}(M), \quad \nabla_{X_1}\cdots\nabla_{X_k} \omega\in x^aL^2\Omega^q(c_{\phi}^{-1}(\cU)\setminus (c^{-1}_{\phi}(\cU)\cap\pa M),g_{\QFC})\}.
\label{wl2.21b}\end{multline}
We consider as well the subsheaf $x^a \cA_{\QFC,d}\Omega^q$ defined by
\begin{equation}
  x^a \cA_{\QFC,2,d}\Omega^q(\cU):= \{\omega\in x^a \cA_{\QFC,2}\Omega^q(\cU) \; | \; d\omega \in x^a \cA_{\QFC,2}\Omega^{q+1}(\cU)\}.  
\label{wl2.22b}\end{equation}
This defines a complex 
\begin{equation}
\xymatrix{
   \cdots\ar[r]^-{d} & x^a \cA_{\QFC,2,d}\Omega^q(\cU) \ar[r]^-{d} & x^a \cA_{\QFC,2,d}\Omega^{q+1}(\cU) \ar[r]^-{d} & \cdots }
\label{wl2.23b}\end{equation}
and we denote by 
$$
\WH^q_{\QFC}(\cU,\phi,a)= \frac{\{\omega\in  \cA_{\QFC,2,d}\Omega^q(\cU)\; | \; d\omega=0\}}{\{ d\eta\; | \; \eta\in \cA_{\QFC,2,d}\Omega^{q-1}(\cU) \}}
$$ the corresponding cohomology group in degree $q$.  By Lemma~\ref{wl2.17}, the weighted $L^2$-cohomology groups $\WH^*_{\QFC}(M,\phi,a)$ correspond to the cohomology groups of this complex when we take $\cU= \widehat{M}_{\phi}$.

Now, let $p_i$ be a point in the regular part $s_i$ of $\overline{s}_i$.  Let $\widehat {B}_i\subset s_i$ be an open neighborhood of $p_i$ in $s_i$.  Since the map $c_{\phi}$ canonically identifies $S_i\setminus \pa S_i$ with $s_i$, let $B_i$ be the corresponding open set in $S_i\setminus \pa S_i$.  Taking $\widehat{B}_i$ smaller if needed, we can assume the fiber bundle $\phi_i: H_i\to S_i$ is trivial over $B_i$.  Choosing a tubular neighborhood of $H_i$ in $M$ as in \cite[Lemma~1.10]{CDR}, we see that we can choose an open neighborhood $\cU_i$ of $p_i$ in $\widehat{M}_{\phi}$ such that 
\begin{equation}
  c_{\phi}^{-1}(\cU_i)\cong [0,\delta)_{x_i}\times Z_i\times B_i
\label{wl2.24}\end{equation}  
with $\phi_i$ corresponding to the projection $Z_i\times B_i\to B_i$ under this identification.  

Recall however that, as opposed to the fibered corners metrics of \cite{DLR}, $\QFB$-metrics do not decompose nicely in terms of the factorization \eqref{wl2.24}.  Instead of $x_i$, one needs to use the total boundary defining function $v$  and look at the decomposition induced by its level sets.  From that point of view, $c_{\phi}^{-1}(\cU_i)$ can be seen as a subset of the Cartesian product
$$
    [0,\delta)_{v} \times Z_i\times B_i,
$$
namely
\begin{equation}
c_{\phi}^{-1}(\cU_i)\cong \cV_i\times B_i\quad \mbox{with}\quad \cV_i:= \{ (v,z)\in [0,\delta)_v\times Z_i \; | \; w_i(z)>\frac{v}{\delta}\} \subset [0,\delta)_{v}\times Z_i,
\label{wl2.25}\end{equation}
where $w_i$ is a choice of total boundary defining function for $Z_i$.  Now, on $c^{-1}_{\phi}(\cU)$, a simple example of $\QFB$-metric is given by the restriction to $\cV_i\times B_i$ of the metric 
\begin{equation}
          \frac{dv^2}{v^4} + g_{Z_i}+ \frac{g_{B_i}}{v^2} \quad \mbox{on} \; (0,\delta)_{v}\times Z_i \times B_i ,
\label{wl2.25b}\end{equation}
where $g_{B_i}$ is a choice of Riemannian metric on $B_i$ and $g_{Z_i}$ is a choice of $\QFB$-metric on $Z_i$ with structure of manifold with fibered corners induced from the one on $M$.  Hence a corresponding example of $\QFC$-metric on $c_{\phi}^{-1}(\cU)$ is given by the restriction to $\cV_i\times B_i$ of the metric 
\begin{equation}
     \frac{dv^2}{v^2}+ v^2g_{Z_i} + g_{B_i}\quad \mbox{on} \; (0,\delta)_{v}\times Z_i \times B_i.  
\label{wl2.25c}\end{equation}
By the Ehresmann lemma of \cite[Corollary~A.6]{KR3}, notice that a $\QFC$-metric is always locally quasi-isometric to a metric of the form \eqref{wl2.25c}.
To compute the weighted $L^2$-cohomology in these local models, let us first assume that $B_i$ is a point.  One then needs to compute the weighted $L^2$-cohomology of the metric 
\begin{equation}
     \frac{dv^2}{v^2}+ v^2 g_{Z_i}
\label{wl2.26d}\end{equation}
in the region  
$$
     \cV_i=\{  (v,z)\in [0,\delta)_{v}\times Z_i \; | \; w_i(z)>\frac{v}{\delta} \}.
$$
Now, the exterior differential may be decomposed in terms of the decomposition $(0,\delta)\times Z_i$, namely
\begin{equation}
     d= d_{Z_i}+ dv\wedge \frac{\pa}{\pa v}= d_{Z_i}+ \frac{dv}{v}\wedge\left(  v\frac{\pa}{\pa v}\right). 
\label{wl2.26c}\end{equation}
\begin{lemma}
The decomposition \eqref{wl2.26c} holds on $L^2$.
\label{dl2.1}\end{lemma}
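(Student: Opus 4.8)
The plan is to establish the following precise form of the statement. Suppose $\omega$ and $d\omega$ both lie in the weighted $L^2$ space defining $\WH^*_{\QFC}(\cU_i,\phi,a)$ -- by Lemma~\ref{wl2.17} we may take $\omega$ to be $\QFB$-conormal, i.e. $\omega\in x^a\cA_{\QFC}\Omega^q(\cU_i)$ with $d\omega\in x^a\cA_{\QFC}\Omega^{q+1}(\cU_i)$ -- and write $\omega=\omega_0+\tfrac{dv}{v}\wedge\omega_1$ with $\omega_0,\omega_1$ ($v$-dependent) forms on $Z_i$. Then
$$
  d\omega \;=\; d_{Z_i}\omega + \tfrac{dv}{v}\wedge\bigl(v\tfrac{\pa}{\pa v}\bigr)\omega \;=\; d_{Z_i}\omega_0 + \tfrac{dv}{v}\wedge\bigl(v\tfrac{\pa}{\pa v}\omega_0 - d_{Z_i}\omega_1\bigr),
$$
the first equality being \eqref{wl2.26c}; moreover the $\tfrac{dv}{v}$-free part $d_{Z_i}\omega_0$ and the $\tfrac{dv}{v}$-part $\tfrac{dv}{v}\wedge(v\pa_v\omega_0-d_{Z_i}\omega_1)$ of $d\omega$ each separately lie in the weighted $L^2$ space. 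This is what makes \eqref{wl2.26c} usable in the analysis of the local model \eqref{wl2.26d}.

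The first ingredient is the pointwise structure. On the interior of $\cV_i\subset[0,\delta)_v\times Z_i$ the bundle $\Lambda^\ast({}^{\QFC}T^\ast M)$ splits $g_{\QFC}$-orthogonally as $\Lambda^\ast(T^\ast Z_i)\oplus\tfrac{dv}{v}\wedge\Lambda^\ast(T^\ast Z_i)$, since $\tfrac{dv}{v}$ has constant $g_{\QFC}$-length $1$ and is $g_{\QFC}$-orthogonal to $T^\ast Z_i$. Hence the decomposition $\omega=\omega_0+\tfrac{dv}{v}\wedge\omega_1$ is orthogonal, $\|\omega\|^2=\|\omega_0\|^2+\|\tfrac{dv}{v}\wedge\omega_1\|^2$, so membership of a form -- or of either of its two $\tfrac{dv}{v}$-homogeneous components -- in the weighted $L^2$ space is tested component by component; and since the splitting is $\nabla$-parallel up to bounded terms, $\omega$ conormal forces $\omega_0$ and $\omega_1$ conormal.

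Next I would verify the displayed identity. On smooth forms it is the Leibniz rule, using that $d_{Z_i}$ and $v\pa_v$ act in complementary variables, that $d_{Z_i}(\tfrac{dv}{v})=0$, and that $\tfrac{dv}{v}\wedge\tfrac{dv}{v}=0$, so $d\omega_0=d_{Z_i}\omega_0+\tfrac{dv}{v}\wedge v\pa_v\omega_0$ and $d(\tfrac{dv}{v}\wedge\omega_1)=-\tfrac{dv}{v}\wedge d_{Z_i}\omega_1$. For a conormal $\omega$ in the weighted $L^2$ domain the identity persists as an identity of conormal sections on the interior of $\cV_i$, where $d$ is the ordinary exterior derivative; this is checked by pairing against compactly supported smooth forms, noting that wedging with $\tfrac{dv}{v}$, $v\pa_v$ and $d_{Z_i}$ all have elementary formal transposes and that no boundary contributions arise on the interior. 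Granting this, the right-hand side is exactly the decomposition of $d\omega$ into its $\tfrac{dv}{v}$-homogeneous pieces; since $d\omega$ lies in the weighted $L^2$ space and that splitting is orthogonal there, each piece lies in the weighted $L^2$ space, which is the assertion of the lemma.

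The step to watch -- and the reason the statement requires an argument -- is that $v\pa_v$ is \emph{not} bounded on the weighted $L^2$ space (acting on forms on $Z_i$ it effectively shifts the weight in $v$), so the two operators $d_{Z_i}\omega$ and $\tfrac{dv}{v}\wedge v\pa_v\omega$ on the right of \eqref{wl2.26c} need not individually be square integrable; the orthogonality argument works only because in the sum $d\omega$ the potential losses are confined to, and cancel within, a single $\tfrac{dv}{v}$-homogeneous component. One must therefore keep the terms grouped as $d_{Z_i}\omega_0$ and $\tfrac{dv}{v}\wedge(v\pa_v\omega_0-d_{Z_i}\omega_1)$ rather than separate them, and the only further point -- that passing to the product coordinates on $\cV_i$ creates no distributional mass along $\{v=0\}$ or along the locus $\{w_i(z)=v/\delta\}$ bounding $\cV_i$ -- is handled by working with conormal representatives on the interior of $\cV_i$.
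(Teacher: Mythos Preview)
Your approach is correct but takes a genuinely different route from the paper's. The paper does not reduce to conormal forms; instead it argues directly on $L^2$ by invoking Zucker's framework: by \cite[Proposition~2.27]{Zucker} the $L^2$ operator $d$ is the closure of its action on effectively compactly supported smooth forms, and although $\cV_i$ is only a \emph{subset} of the Cartesian product $[0,\delta)_v\times Z_i$, such forms extend smoothly to the full product, so one can run the approximation argument of \cite[Proposition~2.28]{Zucker} there and restrict back to $\cV_i$.

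Your argument instead combines two observations: (i) the splitting into $\tfrac{dv}{v}$-free and $\tfrac{dv}{v}$-parts is pointwise $g_{\QFC}$-orthogonal, so once the identity for $d\omega$ is known pointwise the two grouped pieces automatically lie in the weighted $L^2$ space; and (ii) by Lemma~\ref{wl2.17} one may work with conormal representatives, for which the identity is elementary calculus on the interior. This is more self-contained (no appeal to Zucker) and makes transparent why the correct grouping matters, as you note. The trade-off is that you prove the decomposition only for conormal forms rather than for the full $L^2$ domain as the lemma is stated; this suffices for the paper since the subsequent Lemmas~\ref{sa.1}--\ref{sa.3} are carried out in the conormal complex \eqref{wl2.23b}, but your invocation of Lemma~\ref{wl2.17} is slightly informal here, as that lemma is stated for the global cohomology rather than for the local sheaf on $\cU_i$. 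The paper's route, by contrast, yields the statement for arbitrary $L^2$ forms and handles the non-product nature of $\cV_i$ explicitly via the extension-and-restrict trick.
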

\begin{proof}
Since $\cV_i$ is not quite a Cartesian product, we cannot directly apply \cite[Proposition~2.28]{Zucker}.  However, $\cV_i$ sits inside the Cartesian product $[0,\delta)_{v}\times Z_i$.  Moreover, by \cite[Proposition~2.27]{Zucker}, the operator $d$ on $L^2$ is the closure of the exterior derivative of effectively compactly supported smooth forms (in the sense of \cite[p.179]{Zucker}).  But those forms can be extended smoothly to $[0,\delta)_v\times Z_i$.  This means that we can run the approximation argument of the proof of \cite[Proposition~2.28]{Zucker} on $[0,\delta)_v\times Z_i$ and then restrict to $\cV_i$ to obtain the desired result.   
\end{proof}
   On the other hand, on the interval $[0,\delta)$ with $b$-metric $\frac{dv^2}{v^2}$, imposing absolute boundary conditions, namely Neumann boundary conditions for $0$-forms and Dirichlet boundary conditions for $1$-forms at $v=\delta$, we see from \cite{MelroseAPS} that the exterior derivative induces a surjective Fredholm map
\begin{equation}
    \frac{dv}{v}\wedge v\frac{\pa}{\pa v}:  v^{\lambda}H^1_b([0,\delta))\to v^{\lambda} L^2_b([0,\delta); \Lambda^1({}^bT^*([0,\delta))))
\label{wl2.27}\end{equation}
for $\lambda\ne 0$, where $H^1_b([0,\delta))$ is the $b$-Sobolev space of order $1$.  Moreover, its kernel is trivial for $\lambda>0$ and consists of constant functions for $\lambda<0$.  Of course, when $\lambda>0$, the inverse map $G_{\lambda,\delta}$ can be written explicitly,
$$
      G_{\lambda,\delta}(\omega)(v)= \int_0^{v} f(t) dt \quad \mbox{if} \;  \omega(v)= f(v)dv.
$$
For $\lambda<0$, we can consider an inverse $G_{\lambda,\delta}$ on the orthogonal complement of the kernel with respect to the natural inner product on $v^{\lambda}L^2_b([0,\delta))$, 
$$
       \langle f,g\rangle_{\lambda}= \int_{0}^{\delta} v^{-2\lambda}f(v) g(v) \frac{dv}v.
$$
Thus, defining the map 
$$
    \widetilde{G}_{\lambda,\delta}(\omega)(v)= \int_{\delta}^v f(t) dt, \quad \mbox{if} \; \omega(v)=f(v)dv,
$$
we have that 
\begin{equation}
G_{\lambda,\delta}= (\Id-P_{\lambda,\delta})\widetilde{G}_{\lambda,\delta},
\label{fa.1}\end{equation}
where $P_{\lambda,\delta}: v^{\lambda}L^2_b([0,\delta))\to v^{\lambda} L^2_b([0,\delta))$ is the orthogonal projection onto the constant explicitly given by
\begin{equation}
  P_{\lambda,\delta}(f):= -\frac{2\lambda}{\delta^{-2\lambda}}\int_{0}^{\delta} t^{-2\lambda}f(t) \frac{dt}{t}.
\label{fa.2}\end{equation}

Using that the dilation 
$$
       \begin{array}{llcl}
       \mathcal{D}: & [0,1] &\to & [0,\delta] \\ 
          & v & \mapsto & \delta v
       \end{array}
$$
induces a map $\mathcal{D}^*: v^{\lambda}L^2_b([0,\delta)) \to v^{\lambda} L^2_b([0,1))$ such that $\delta^{-\lambda}\mathcal{D}^*$ is an isometry, a simple computation shows that the operator norms of the inverse map $G_{\lambda,\delta}$ and the projection $P_{\lambda,\delta}$ do not depend on $\delta$.  

Now, let $a=(a_1,\ldots, a_{\ell})\in \bbR^{\ell}$ be a choice of weight and let $\omega\in x^a L^2_{\QFC,d}\Omega^q(\cU)$ be a closed form.  In particular, we have that
\begin{equation}
\int_{Z_i}\left(\int_{0}^{\delta w_i(z)} x^{-2a} \|\omega \|_{g_{\QFC}}^2 v^{m_i} \frac{dv}{v}\right) dg_{Z_i} <\infty, 
\label{wl2.28}\end{equation}
where $m_i:=\dim Z_i$ and $w_i$ is a total boundary defining function of $Z_i$ as in \eqref{wl2.25}.
The form $\omega$ decomposes as
\begin{equation}
  \omega= \omega_1 + \frac{dv}v\wedge \omega_2,
\label{wl2.29}\end{equation}
where $\omega_1$ and $\omega_2$ are forms involving no factor $dv$, so that in terms of \eqref{wl2.26c},
\begin{equation}
    d\omega= d_{Z_i}\omega_1+ \frac{dv}v\wedge \left( v\frac{\pa \omega_1}{\pa v}- d_{Z_i}\omega_2 \right), \quad d\omega=0 \; \Longrightarrow \; d_{Z_i}\omega_1=0, \; v\frac{\pa \omega_1}{\pa v}= d_{Z_i}\omega_2.
\label{wl2.29b}\end{equation}
  Thus, if we denote by $h_{\QFC}= w_i^2g_{Z_i}$ the $\QFC$-metric on $Z_i$ associated to the $\QFB$-metric $g_{Z_i}$, then in terms of the decomposition \eqref{wl2.29}, the condition \eqref{wl2.28} means that 
\begin{equation}
\begin{gathered}
  \int_{Z_i} \int_0^{\delta w_i(z)} x^{-2a} \left( \frac{v}{w_i} \right)^{-2q} \|\omega_1(v,z)\|^2_{h_{\QFC}} v^{m_i} \frac{dv}v dg_{Z_i}<\infty,\\
  \int_{Z_i} \int_0^{\delta w_i(z)} x^{-2a}\left( \frac{v}{w_i} \right)^{-2(q-1)}\|\omega_2(v,z)\|^2_{h_{\QFC}} v^{m_i} \frac{dv}{v} dg_{Z_i}<\infty.
\end{gathered}  
\label{wl2.30}\end{equation}
In particular, for each fixed $z\in Z_i$, provided $\lambda=a_i+ (q-1)-\frac{m_i}2$ is non-zero, we can consider the inverse $G_{\lambda,\epsilon}$ with $\epsilon=\delta w_i(z)$ to construct a  map 
\begin{equation}
         \begin{array}{llcl}
         G^q:  & x^aL^2_{\QFC,d}\Omega^q(\cV_i) &\to  &x^aL^2_{\QFC}\Omega^{q-1}(\cV_i) \\
                  & \omega_1+\frac{dv}v\wedge \omega_2 & \mapsto & G_{a_i+(q-1)-\frac{m_i}2,\delta w_i(z)}(\frac{dv}v \wedge \omega_2(v,z)).        
                  \end{array}
\label{wl2.38b}\end{equation}
Using the fact that in $\cV_i$, $\QFB$-vector fields are generated by $v^2\frac{\pa}{\pa v}$ and $\QFB$-vector fields on $Z_i$, we see that this also induces a map
\begin{equation}
         \begin{array}{llcl}
         G^q:  & x^a\cA_{\QFC,2,d}\Omega^q(\cV_i) &\to  &x^a\cA_{\QFC,2}\Omega^{q-1}(\cV_i) \\
                  & \omega_1+\frac{dv}v\wedge \omega_2 & \mapsto & G_{a_i+(q-1)-\frac{m_i}2,\delta w_i(z)}(\frac{dv}v \wedge \omega_2(v,z)).        
                  \end{array}
\label{wl2.38}\end{equation}
By construction, notice that $G^q(\omega)$ has no $dv$ factor and is such that $\frac{dv}v\wedge v\frac{\pa}{\pa v}G^q(\omega)=\frac{dv}v\wedge \omega_2$.  
When $\lambda>0$, this immediately yields the following vanishing result.  
\begin{lemma}
If $a=(a_1,\ldots, a_{\ell})\in \bbR^{\ell}$ is a choice of weight, then for $q-1>\frac{m_i}2-a_i$, we have that
$$
    \WH_{\QFC}^q(\cV_i,\phi,a)=\{0\}.
$$
\label{sa.1}\end{lemma}
\begin{proof}
Let $\omega\in x^a \cA_{\QFC}\Omega^q(\cV_i)$ be a closed form.   Since   we are assuming that $\lambda=a_i+ (q-1)-\frac{m_i}2>0$, we compute that
\begin{equation}
dG^q(\omega) = \int_{0}^v d_{Z_i}\omega_2(t) \frac{dt}t+ \frac{dv}v\wedge \omega_2 = \int_{0}^v  \frac{\pa \omega_1(t)}{\pa t} dt + \frac{dv}v\wedge \omega_2 =\omega_1 +\frac{dv}v\wedge \omega_2= \omega,
\label{fa.2b}\end{equation}
which shows that $\omega$ is necessarily exact in this case.  \end{proof}

If instead  $\lambda=a_i+(q-1)-\frac{m_i}2<0$, then for each fixed $z\in Z_i$, $G^q(\omega)$ is orthogonal to the constant sections of 
$$
[0,\delta w_i(z))\times \Lambda^{q-1}T_z^*Z_i\to [0,\delta w_i(z))
$$
with respect to the weighted inner product
$$
    \langle \eta, \psi\rangle_z= \int_{0}^{\delta w_i(z)}  x^{-2a}\left( \frac{v}{w_i(z)} \right)^{-2(q-1)}\langle \eta, \psi\rangle_{h_{\QFC}} v^{m_i}\frac{dv}v.
$$
In this case, denote by $P_{\lambda,z}$ the projection
\begin{equation}
P_{\lambda,z}(\eta)  :=-\frac{2\lambda}{(\delta w_i(z))^{-2\lambda}} \int_0^{\delta w_i} v^{-2\lambda} \eta(v) \frac{dv}{v}
\label{wl2.40b}\end{equation}
 on the constant section for fixed $z$ induced by \eqref{fa.2} and let 
\begin{equation}
         \begin{array}{llcl}
         P_{q-1}:  & x^a\cA_{\QFC}\Omega^{q-1}(\cV_i) &\to  &x^a\cA_{\QFC}\Omega^{q-1}(\cV_i) \\
                  & \eta_1+\frac{dv}v\wedge \eta_2 & \mapsto & P_{\lambda,z}(\eta_1)
                  \end{array}
\label{wl2.40}\end{equation}
be the induced map as $z$ is allowed to vary.   With this notation understood, we have that
$$
      G^{q}= (\Id-P_{q-1})\widetilde{G}^q
$$
with $\widetilde{G}^q$ the operator defined by
$$
     \widetilde{G}^{q}(\omega)= \int_{\delta w_i(z)}^{v} \omega_2(t) \frac{dt}{t}.
$$
The computation of $dG^q(\omega)$ then yields the following result.
\begin{lemma}
If $a=(a_1,\ldots, a_{\ell})\in \bbR^{\ell}$ is a choice of weight, then for $q<\frac{m_i}2-a_i$, we have that
$$
    \WH_{\QFC}^q(\cV_i,\phi,a) \cong \WH_{\QFC}^q(Z_i,\phi,(a_{i+1},\ldots,a_{\ell})).
$$
\label{sa.2}\end{lemma}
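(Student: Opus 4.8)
The plan is to build an explicit chain homotopy equivalence, valid in degrees $q<\frac{m_i}{2}-a_i$, between the complex $\bigl(x^a\cA_{\QFC,d}\Omega^{\bullet}(\cV_i),d\bigr)$ computing $\WH^{\bullet}_{\QFC}(\cV_i,\phi,a)$ and the complex $\bigl(x^{a'}\cA_{\QFC,d}\Omega^{\bullet}(Z_i),d_{Z_i}\bigr)$ computing $\WH^{\bullet}_{\QFC}(Z_i,\phi,a')$, where $a'=(a_{i+1},\ldots,a_{\ell})$. The two maps are the inclusion $\iota$ sending a form on $Z_i$ to the corresponding $v$-independent form on $\cV_i$ with no $dv$ factor, and the ``outer-face restriction'' $r$ obtained by pulling a form on $\cV_i$ back along the embedding $z\mapsto(\delta w_i(z),z)$ of $Z_i$ onto the hypersurface $\{v=\delta w_i(z)\}$ that bounds $\cV_i$ inside $[0,\delta)_v\times Z_i$, and reading the result as a ($v$-independent) form on $Z_i$. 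The chain homotopy is the operator $G^{\bullet}$ of \eqref{wl2.38} (or $\widetilde G^{\bullet}$ directly, the two differing by a $d_{Z_i}$-exact term). The conclusion will be that $r$ and $\iota$ are mutually inverse isomorphisms on $\WH^q$ in the stated range.

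First I would do the integrability bookkeeping identifying, in these degrees, the subcomplex of $v$-independent $dv$-free forms inside $x^a\cA_{\QFC,d}\Omega^{\bullet}(\cV_i)$ with $x^{a'}\cA_{\QFC,d}\Omega^{\bullet}(Z_i)$. Using $x_i=v/w_i$ in the local model, the $\QFC$-norm over $\cV_i$ of a $v$-independent $dv$-free $q$-form $\eta(z)$ equals, up to a $z$-independent constant, $\int_{Z_i}\bigl(\prod_{j>i}x_j^{-2a_j}\bigr)w_i^{2a_i+2q}\|\eta\|^2_{h_{\QFC}}\bigl(\int_0^{\delta w_i(z)}v^{\,m_i-2q-2a_i}\tfrac{dv}{v}\bigr)dg_{Z_i}$, whose inner integral converges exactly when $q<\frac{m_i}{2}-a_i$ and then equals $\frac{(\delta w_i(z))^{m_i-2q-2a_i}}{m_i-2q-2a_i}$; substituting and using $w_i^{m_i}dg_{Z_i}=dh_{\QFC}$ yields precisely a constant multiple of the $x^{a'}L^2(Z_i,h_{\QFC})$-norm of $\eta$ -- in particular the weight $a_i$ disappears. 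Conormality transfers because the $\QFB$-vector fields on $\cV_i$ are generated by $v^2\pa_v$, which annihilates $v$-independent forms, together with $\QFB$-vector fields on $Z_i$. This already gives $r\circ\iota=\Id$ on the nose, and it shows $r$ and $\iota$ descend to maps on $\WH^q$ whenever $q<\frac{m_i}{2}-a_i$.

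Next I would run the homotopy computation as in the proof of Lemma~\ref{sa.1}, but now with $\lambda=a_i+(q-1)-\tfrac{m_i}{2}<-1$, so one integrates from the outer end $v=\delta w_i(z)$. A direct computation of $d\widetilde G^q\omega$ for $\omega=\omega_1+\frac{dv}{v}\wedge\omega_2$, using \eqref{wl2.29b} and Lemma~\ref{dl2.1}, gives the homotopy identity $d\widetilde G^q\omega+\widetilde G^{q+1}d\omega=\omega-r\omega$: because $\cV_i$ is only a \textbf{subset} of $[0,\delta)_v\times Z_i$, the operator $d_{Z_i}$ also differentiates the endpoint $\delta w_i(z)$, and the resulting correction $-\frac{dw_i}{w_i}\wedge\omega_2(\delta w_i(z),z)$ is exactly what assembles the boundary contributions into the outer-face pullback $r\omega$. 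One checks $G^q\omega\in x^a\cA_{\QFC,d}\Omega^{q-1}(\cV_i)$ in this weight range (the factor $v^{\,m_i-2q-2a_i}$ is integrable near $v=0$ for $q<\frac{m_i}{2}-a_i$, so the relevant primitives are $L^2$, and $G^q$ is bounded with $\delta$-independent norm), whence on cohomology $\iota\circ r=\Id$ on $\WH^q_{\QFC}(\cV_i,\phi,a)$. Combined with $r\circ\iota=\Id$ this gives the asserted isomorphism; note that only degrees $q-1$ and $q$ of the two complexes enter, so the boundary case $q+1\ge\frac{m_i}{2}-a_i$ causes no trouble.

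I expect the main obstacle to be controlling the non-product nature of $\cV_i$. One must (i) verify that the endpoint-differentiation corrections recombine cleanly into the single term $r\omega$ rather than into something uncontrolled, and (ii) -- more delicately -- check that $r$ sends $\QFB$-conormal forms on $\cV_i$ to $\QFB$-conormal forms on $Z_i$. The outer hypersurface $\{v=\delta w_i(z)\}$ is not tangent to the $\QFB$-vector fields lifted from $Z_i$, since for such a field $\xi$ one only has $\xi(v-\delta w_i)=-\delta\,\xi w_i\in w_i^2\CI(Z_i)$, which is nonzero away from $w_i=0$; so conormality of $r\omega$ must be deduced from the fact that conormality on $\cV_i$ controls all derivatives, including the normal ones, uniformly up to that face. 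The $\delta$-independence of the operator norms of $G_{\lambda,\delta}$ and $P_{\lambda,\delta}$ recorded just before the statement is what keeps these estimates uniform as the base point and the radius $\delta w_i(z)$ vary.
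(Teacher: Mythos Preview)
Your approach is correct and closely parallels the paper's, with one substantive difference: the choice of homotopy operator and hence of $v$-constant representative. You use $\widetilde{G}^q$ (integration from the outer face $v=\delta w_i(z)$) together with the restriction map $r$, obtaining the clean identity $d\widetilde{G}^q+\widetilde{G}^{q+1}d=\Id-\iota r$; the paper instead uses $G^q=(\Id-P_{q-1})\widetilde{G}^q$, the version projected orthogonal to $v$-constants. Your formula is tidier, but it shifts the analytic burden to verifying that the trace $r\omega=\omega|_{v=\delta w_i}$ lands in $x^{a'}\cA_{\QFC}\Omega^q(Z_i)$---which you rightly flag as obstacle (ii), and which genuinely needs an argument, since conormality is an $L^2$ condition while $r$ evaluates on a hypersurface degenerating at $\pa Z_i$. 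The paper sidesteps this: by \emph{averaging} in $v$ rather than \emph{restricting}, the representative $\omega'=\omega-dG^q\omega$ is built from the $L^2$-bounded operators $P_{\lambda_i,z}$, and its membership in the weighted space follows from the explicit computation \eqref{fa.3}--\eqref{fa.6} and the pointwise comparison \eqref{fa.7}, with no trace theorem required. (The paper does switch to your $\widetilde{G}^q$ and $r$ in the next lemma, where $q$ sits in the borderline range and $r\omega$ is deliberately \emph{not} in the weighted space---there the argument exploits exactly that failure.) One small correction: $G^q\omega$ and $\widetilde{G}^q\omega$ do not literally differ by a $d_{Z_i}$-exact term but by the $v$-constant form $-P_{q-1}\widetilde{G}^q\omega$; it is only after applying $d$ that the difference becomes $d_{Z_i}$-exact, so the two representatives of $[\omega]$ agree in $\WH^q_{\QFC}(Z_i)$ as you intend.
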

\begin{proof}
Let $\omega\in x^a \cA_{\QFC,d}\Omega^q(\cV_i)$ be a form such that $d\omega$ has no $dv$ factor, so that by \eqref{wl2.29b},
\begin{equation}
     v\frac{\pa \omega_1}{\pa v}= d_{Z_i}\omega_2.
\label{sa.2b}\end{equation} 
In particular, this holds if $\omega\in x^a \cA_{\QFC,d}\Omega^q(\cV_i)$ is a closed form.  
To compute $d G^q(\omega)$, we need to perform a preliminary computation.  First, for $P_{q-1}$, notice that for $\lambda_i:= a_i+(q-1)-\frac{m_i}2<0$ and
$\eta=\widetilde{G}^q(\omega)$, we have that
\begin{equation}
\begin{aligned}
 d_{Z_i}P_{q-1}(\eta) &= d_{Z_i} \left( -\frac{2\lambda_i}{(\delta w_i(z))^{-2\lambda_i}} \int_0^{\delta w_i} v^{-2\lambda_i} \eta(v) \frac{dv}{v} \right) \\
 &= \frac{dw_i}{w_i} \wedge\left( 2\lambda_i P_{q-1}(\eta)-2\lambda_i\eta(\delta w_i)\right) + P_{\lambda_i,z}(d_{Z_i}\eta) \\
 &=\frac{dw_i}{w_i} \wedge\left( 2\lambda_i P_{q-1}(\eta)\right) + P_{\lambda_i,z}(d_{Z_i}\eta), \end{aligned}
\label{fa.3}\end{equation}
where $P_{\lambda_i,z}$ was defined in \eqref{wl2.40b} and in the last line, we have used the fact that $\eta(\delta w_i)= \widetilde{G}^q(\omega)(\delta w_i)=0$.  Hence, we compute that
\begin{equation}
d(G^q(\omega))= d_{Z_i}(\Id-P_{q-1})\widetilde{G}^q(\omega)+ \frac{dv}v\wedge \omega_2= -\frac{dw_i}{w_i}\wedge (2\lambda_iP_{q-1}(\widetilde{G}^q(\omega))) + (\Id-P_{\lambda_i,z})d_{Z_i}\widetilde{G}^q(\omega)+ \frac{dv}v\wedge\omega_2.
\label{fa.4}\end{equation}
But using that $(\Id-P_{\lambda_i,z})$ gives zero when acting on sections constant in $v$, we see that 
\begin{equation}
\begin{aligned}
(\Id-P_{\lambda_i,z})d_{Z_i}\widetilde{G}^q(\omega)&= (\Id-P_{\lambda_i,z}) d_{Z_i}\left( \int_{\delta w_i}^{v} \omega_2(t) \frac{dt}{t} \right) \\
 &= (\Id-P_{\lambda_i,z}) \left( -\frac{dw_i}{w_i}\wedge \omega_2(\delta w_i)+ \int_{\delta w_i}^v d_{Z_i}\omega_2(t) \frac{dt}{t} \right) \\
 &= (\Id-P_{\lambda_i,z}) \int_{\delta w_i}^v\frac{\pa}{\pa t}\omega_1(t) dt= (\Id-P_{\lambda_i,z}) (\omega_1(v)-\omega_1(\delta w_i))\\
 &= (\Id-P_{\lambda_i,z}) \omega_1(v).
  \end{aligned}
\label{fa.5}\end{equation}
Hence, we see that 
\begin{equation}
  d(G^q(\omega))= -\frac{dw_i}{w_i}\wedge \left( 2\lambda_i P_{q-1}(\widetilde{G}^q(\omega)) \right) + (\Id-P_{\lambda_i,z})\omega_1 + \frac{dv}{v}\wedge \omega_2.
\label{fa.6}\end{equation}
Since $\frac{dw_i}{w_i}$ has bounded pointwise norm with respect to the metric $h_{\QFC}$, we see  that 
$$
\frac{dw_i}{w_i}\wedge \left( 2\lambda_i P_{q-1}(\widetilde{G}^q(\omega) \right)\in x^a\cA_{\QFC,2}\Omega^q(\cV_i).
$$
On the other hand, with respect to the pointwise norm $|\cdot |_{h_{\QFC}}$ induced by the metric $h_{\QFC}$, we have that 
\begin{equation}
\begin{aligned}
|P_{\lambda_i,z}\omega_1|_{h_{\QFC}} &= -\frac{2\lambda_i}{(\delta w_i)^{-2\lambda_i}}\left|\int_0^{\delta w_i}t^{-2\lambda_i}\omega_1(t)\frac{dt}t\right|_{h_{\QFC}} \\
   &= -\frac{2\lambda_i}{(\delta w_i)^{-2\lambda_i}}\left|\int_0^{\delta w_i}t^{-2\lambda_i-2}t^2\omega_1(t)\frac{dt}t\right|_{h_{\QFC}} \\ 
   &\le -\frac{2\lambda_i}{(\delta w_i)^{-2\lambda_i-2}}\left|\int_0^{\delta w_i}t^{-2\lambda_i-2}\omega_1(t)\frac{dt}t\right| =\frac{-2\lambda_i}{-2\lambda_i-2}|P_{q}(\omega_1)|_{h_{\QFC}},\\
\end{aligned}
\label{fa.7}\end{equation}
which shows that $P_{\lambda_i,z}\omega_1\in x^a\cA_{\QFC,2}\Omega^q(\cU_i)$.  Hence, this shows that $dG^q(\omega)\in x^a\cA_{\QFC,2}\Omega^{q}(\cV_i)$, that is, that
$$
     G^q(\omega)\in x^a\cA_{\QFC,2,d}\Omega^{q-1}(\cV_i).
$$

Finally this means that $\omega$ and $\omega'=\omega-d G^q(\omega)$ are such that $d\omega=d\omega'$, and by construction, $\omega'$ has no $dv$ factor.  In particular, if $\omega$ is closed, then $\omega$ and $\omega'$ represent the same cohomology class with $\omega'$ having no $dv$ factor.  Since it is closed, it must be constant in $v$.  Clearly moreover, if $\omega'= d\eta$ with $\eta\in x^a\cA_{\QFC,2}\Omega^{q-1}(\cV_i)$, then $d\eta$ has no $dv$ factor, so by the same argument, $\eta'=\eta-dG^{q-1}(\eta)$ has no $dv$ factor and is such that $d\eta'=\omega'$.  Since $\omega'$ has no $dv$ factor, this means that $\eta'$ must also be constant in $v$.  This means that to compute weighted $L^2$-cohomology in $\cV_i$ for $\QFB$-conormal forms, we only need to use weighted  conormal forms in $\cV_i$ that have no $dv$ factor and are constant in $v$.  
Now, in $c^{-1}_{\phi}(\cU_i)$, we can suppose that $x_1=\cdots= x_{i-1}=1$, so we can write 
\begin{equation}
     x^a= v^{a_i}x_{i+1}^{a_{i+1}-a_i}\cdots x_{\ell}^{a_{\ell}-a_i}= v^{a_i}w_i^{-a_i}x_{i+1}^{a_{i+1}}\cdots x_{\ell}^{a_{\ell}}.
\label{wl2.31}\end{equation}
Thus, using the condition \eqref{wl2.30} with $\omega_1= \omega'$ and $\omega_2=0$, we find using \eqref{wl2.31} and integrating in $v$ that 
\begin{equation}
\|\omega'\|^2_{x^aL^2_{\QFC}\Omega^q(\cU)}=\left( \frac{\delta^{m_i-2q-2a_i}}{m_i-2q-2a_i} \right)\int_{Z_i} \|\omega' \|^2_{h_{\QFC}} x_{i+1}^{-2a_{i+1}}\cdots x_{\ell}^{-2a_{\ell}} dh_{\QFC}<\infty.
\label{wl2.32}\end{equation}  
Hence, these weighted conormal forms constant in $v$ are in one-to-one correspondence with 
\begin{equation}
     x_{i+1}^{a_{i+1}}\cdots x_{\ell}^{a_{\ell}} \cA_{\QFC,2,d}\Omega^q(Z_i) \quad \mbox{for} \; q<\frac{m_i}2-a_i,
\label{wl2.33}\end{equation}
from which the result follows.  
\end{proof}

Let us now consider the case when $q-1<\frac{m_i}{2}-a_i<q$.  In this case, we can get some vanishing result provided some extra condition is satisfied.
\begin{lemma}
Let $a=(a_1,\ldots, a_{\ell})\in \bbR^{\ell}$ be a weight such that $q-1<\frac{m_i}2-a_i\le q$.  If  the map $\WH_{\QFC}^q(Z_i,\phi,(a_{i+1},\ldots,a_{\ell}))\to H^q(Z_i)$ is injective, then
$$
    \WH_{\QFC}^q(\cV_i,\phi,a) =\{0\}.
$$\label{sa.3}\end{lemma}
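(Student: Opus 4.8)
My plan is to follow the template of the proof of Lemma~\ref{sa.2}, exploiting the left inequality $\frac{m_i}2-a_i>q-1$ to carry out the primitive construction in degree $q$ and the right inequality $\frac{m_i}2-a_i\le q$, together with the injectivity hypothesis, to turn a de~Rham triviality statement on $Z_i$ into exactness in the weighted $\QFC$-complex. Let $\omega=\omega_1+\frac{dv}v\wedge\omega_2\in x^a\cA_{\QFC,d}\Omega^q(\cV_i)$ be closed. Since $\lambda_i:=a_i+(q-1)-\frac{m_i}2<0$ (indeed $\lambda_i\in[-1,0)$), the operator $G^q$ of \eqref{wl2.38} is defined, so that $G^q(\omega)\in x^a\cA_{\QFC}\Omega^{q-1}(\cV_i)$, and the algebraic part of the computation in the proof of Lemma~\ref{sa.2} (formulas \eqref{fa.3}--\eqref{fa.6}, which only use $\lambda_i<0$) shows that $\omega':=\omega-dG^q(\omega)$ has no $dv$-factor and, being closed, is constant in $v$; I regard it as a closed form on $Z_i\setminus\pa Z_i$. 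A Cauchy--Schwarz estimate in the spirit of \eqref{fa.7}---now comparing the weights $v^{-2\lambda_i}$ and $v^{-2(\lambda_i+1)}=v^{m_i-2q-2a_i}$ instead of relying on $m_i-2q-2a_i>0$---gives $\|\omega'\|_{x^{a'}L^2_{\QFC}(Z_i)}\lesssim\|\omega\|_{x^aL^2_{\QFC}(\cV_i)}$ with $a'=(a_{i+1},\dots,a_\ell)$, and likewise for the $\QFB$-conormal derivatives, so $\omega'$ represents a class $[\omega']\in\WH^q_{\QFC}(Z_i,\phi,a')$.

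The crucial point is then to show that $[\omega']$ maps to $0$ under $\WH^q_{\QFC}(Z_i,\phi,a')\to H^q(Z_i)$. Restricting $\omega=dG^q(\omega)+\omega'$ to a slice $\{v=v_0\}$ gives, on the region $\{z:w_i(z)>v_0/\delta\}$, the identity $\omega'=i_{v_0}^*\omega-d_{Z_i}\bigl(i_{v_0}^*G^q(\omega)\bigr)$, so the de~Rham class of $\omega'$ restricts there to $[i_{v_0}^*\omega]$; since $\{w_i>v_0/\delta\}$ is a deformation retract of $Z_i\setminus\pa Z_i$ for $v_0$ small, it suffices to see that $\|i_{v_0}^*\omega\|$ becomes arbitrarily small along some sequence $v_0\to0$. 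This is where $\frac{m_i}2-a_i\le q$ enters: the exponent $m_i-2q-2a_i$ weighting the $v$-integral in \eqref{wl2.30} is then $\le0$, and bounding $w_i^{2(\lambda_i+1)}\ge(v/\delta)^{2(\lambda_i+1)}$ on $\{w_i>v/\delta\}$ yields $\int_0^\delta\|i_v^*\omega\|^2_{x^{a'}L^2_{\QFC}(\{w_i>v/\delta\})}\,\tfrac{dv}{v}\lesssim\|\omega\|^2_{x^aL^2_{\QFC}(\cV_i)}<\infty$, whence $\liminf_{v\to0}\|i_v^*\omega\|=0$ and therefore $[\omega']_{dR}=0$.

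The injectivity hypothesis now gives $[\omega']=0$ already in $\WH^q_{\QFC}(Z_i,\phi,a')$, i.e.\ $\omega'=d_{Z_i}\beta$ for some $\beta\in x^{a'}\cA_{\QFC}\Omega^{q-1}(Z_i)$ with $d_{Z_i}\beta\in x^{a'}\cA_{\QFC}\Omega^q(Z_i)$. Pulling $\beta$ back to $\cV_i$ as a form constant in $v$: because $\frac{m_i}2-a_i>q-1$, the analogue of the computation \eqref{wl2.32} in degree $q-1$ converges, so $\pr^*\beta\in x^a\cA_{\QFC}\Omega^{q-1}(\cV_i)$ and $d(\pr^*\beta)=d_{Z_i}\beta=\omega'$. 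Hence $\omega=d\bigl(G^q(\omega)+\pr^*\beta\bigr)$ with $G^q(\omega)+\pr^*\beta\in x^a\cA_{\QFC}\Omega^{q-1}(\cV_i)$ and differential $\omega\in x^a\cA_{\QFC}\Omega^q(\cV_i)$, so $\omega$ is exact in the complex \eqref{wl2.23b}, giving $\WH^q_{\QFC}(\cV_i,\phi,a)=\{0\}$. I expect the main obstacle to be the middle step: extracting genuine decay of $i_{v_0}^*\omega$ as $v_0\to0$ and deducing $[\omega']_{dR}=0$---morally, that the borderline weight $\frac{m_i}2-a_i\le q$ forces the $\{v=0\}$-trace of $\omega$ to vanish, so that $\omega$ is de~Rham-trivial on $\cV_i$; making the slice restrictions and this limiting argument precise with only $\QFB$-conormality available (rather than a full polyhomogeneous expansion) is the delicate part, and it is exactly here that the right inequality, as opposed to the strict one used in Lemma~\ref{sa.2}, is needed.
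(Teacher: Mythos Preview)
Your overall strategy is correct and parallels the paper's: reduce $\omega$ modulo an exact piece to a $v$-constant form $\omega'$, show $[\omega']=0$ in $H^q(Z_i)$, use the injectivity hypothesis to get $\omega'=d_{Z_i}\beta$ in the weighted complex on $Z_i$, and pull $\beta$ back (this uses $q-1<\tfrac{m_i}{2}-a_i$). The paper does exactly this, but makes two choices that streamline the argument and close the gap you identify.

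First, the paper uses $\widetilde{G}^q$ rather than $G^q$. The computation \eqref{cs.1} gives directly
\[
d\widetilde{G}^q(\omega)=\omega-\omega|_{v=\delta w_i},
\]
so the residual form is simply the boundary trace $\omega|_{v=\delta w_i}$, which is manifestly in $x^{a'}\cA_{\QFC}\Omega^q(Z_i)$ by conormality. This avoids the Cauchy--Schwarz estimate you need to place your averaged $\omega'$ in the weighted space, and sidesteps the fact that $G^q(\omega)$ alone is \emph{not} in $x^a\cA_{\QFC,d}\Omega^{q-1}(\cV_i)$ in this borderline range (since $dG^q(\omega)=\omega-\omega'$ and a nonzero $v$-constant $q$-form fails to be in $x^aL^2_{\QFC}$ when $\tfrac{m_i}{2}-a_i\le q$). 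Your final combination $G^q(\omega)+\pr^*\beta$ still works because its differential is $\omega$, but the intermediate bookkeeping is cleaner with $\widetilde{G}^q$.

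Second, and more importantly, the step you correctly flag as the obstacle---passing from $\liminf_{v_0\to 0}\|i_{v_0}^*\omega\|=0$ to $[\omega']_{dR}=0$---is a genuine gap as written. Small weighted $L^2$ norm of representatives on \emph{growing} domains does not by itself kill a de~Rham class on the noncompact $Z_i\setminus\pa Z_i$. The paper closes this (Lemma~\ref{cs.2}) by switching to the product coordinates $(x_i,z)$, in which $\cV_i\cong[0,\delta)_{x_i}\times Z_i$ is an honest product, and then restricting to a \emph{fixed} compact $K_i=\{w_i\ge\epsilon\}$ onto which $Z_i\setminus\pa Z_i$ retracts. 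There Hodge theory with absolute boundary conditions gives a harmonic representative $\nu$ of $[\omega'|_{K_i}]$ with $\|\nu\|_{L^2(K_i)}\le\|\omega_1(x_i)\|_{L^2(K_i)}$ for every $x_i$, while the restriction of the $x^aL^2_{\QFC}$ condition to $C_i=[0,\delta)\times K_i$ reads
\[
\int_0^\delta\|\omega_1(x_i)\|_{L^2(K_i)}^2\,x_i^{-2\alpha_i}\,\frac{dx_i}{x_i}<\infty,\qquad \alpha_i=a_i+q-\tfrac{m_i}{2}\ge 0,
\]
forcing $\nu=0$. Your slice argument can be repaired the same way: restrict $i_{v_0}^*\omega$ further to a fixed $K_i$, where the weight is bounded and Hodge theory applies; once you do this, you are essentially reproducing the paper's Lemma~\ref{cs.2}.
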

\begin{proof}
Let $\omega\in x^a \cA_{\QFC,2}\Omega^q(\cV_i)$ be a closed form.  In this setting, it is easier to consider $\widetilde{G}^q(\omega)$ instead $G^q(\omega)$.  Then one computes that 
\begin{equation}
\begin{aligned}
d\widetilde{G}^q(\omega)&= d \left( \int_{\delta w_i(z)}^{v} \omega_2(t) \frac{dt}t \right) \\
          &= \frac{dv}{v}\wedge \omega_2(v)- \frac{dw_i}{w_i}\wedge\omega_2(\delta w_i) + \int_{\delta w_i(z)}^v d_{Z_i} \omega_2(t) \frac{dt}t \\
          &=\frac{dv}{v}\wedge \omega_2(v)- \frac{dw_i}{w_i}\wedge\omega_2(\delta w_i) + \int_{\delta w_i(z)}^v \frac{\pa \omega_1}{\pa t}(t)  dt \\
          &=\omega(v) - \left(  \omega|_{v=\delta w_i}\right).
\end{aligned}
\label{cs.1}\end{equation}
Since $\omega|_{v=\delta w_i}$ is constant in $v$ and $q-1<\frac{m_i}{2}-a_i\le q$, the only way it can be an element of $x^a\cA_{\QFC,2}\Omega^q(\cV_i)$ is if it vanishes trivially.  If this is not the case, then $\widetilde{G}^q(\omega)$ is an element of $x^a\cA_{\QFC,2}\Omega^{q-1}(\cV_i)$, but not of $x^a\cA_{\QFC,2,d}\Omega^{q-1}(\cV_i)$.  However,  thanks to the fact that $q-1<\frac{m_i}2-a_i$, we can try to look for a form $\eta\in x^a\cA_{\QFC,2}\Omega^{q-1}(\cV_i)$ constant in $v$ such that 
$$
     d\eta= d_{Z_i}\eta= \omega|_{v=\delta w_i}.
$$
Indeed, if such a form exists, the result follows by replacing $\widetilde{G}^q(\omega)$ with $\widetilde{G}^q(\omega)+\eta$, since then
$$
   \omega= d(\widetilde{G}^q(\omega)+\eta) \quad \mbox{and} \quad  \widetilde{G}^q(\omega)+\eta\in x^a\cA_{\QFC,2,d}\Omega^{q-1}(\cV_i).
$$
Clearly, such a form $\eta$ exists if and only if the form $\omega|_{v=\delta w_i}\in x_{i+1}^{a_{i+1}}\cdots x_{\ell}^{a_{\ell}}\cA_{\QFC,2}\Omega^q(Z_i)$ defines a trivial cohomology class in $\WH_{\QFC}^q(Z_i,\phi,(a_{i+1},\ldots,a_{\ell}))$, so that the proof is completed by the lemma below and our assumption that the natural map
 $\WH^q_{\QFC}(Z_i,\phi, (a_{i+1},\ldots,a_{\ell}))\to H^q(Z_i)$ is an inclusion.
\end{proof}

\begin{lemma}
If $\omega\in x^a\cA_{\QFC,2}\Omega^q(\cV_i)$ is a closed form of degree $q$ such that $q-1<\frac{m_i}{2}-a_i\le q$, then $\omega|_{v=\delta w_i}$ defines a trivial cohomology class in $H^q(Z_i)$.
\label{cs.2}\end{lemma}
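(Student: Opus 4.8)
The plan is to prove the stronger assertion that $\omega|_{v=\delta w_i}$, viewed as a smooth closed $q$-form on the interior $Z_i^{\circ}:=Z_i\setminus\pa Z_i$ (whose de Rham cohomology computes $H^{q}(Z_i)$), integrates to zero over every smooth singular $q$-cycle, and hence is exact by de Rham's theorem.

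First I would assemble the ingredients already at hand. Writing $\omega=\omega_1+\frac{dv}{v}\wedge\omega_2$ as in \eqref{wl2.29}, closedness gives $d_{Z_i}\omega_1=0$ and $v\,\partial_v\omega_1=d_{Z_i}\omega_2$ as in \eqref{wl2.29b}; moreover the identity \eqref{cs.1}, namely $d\widetilde{G}^{q}(\omega)=\omega-\omega|_{v=\delta w_i}$, holds with $\widetilde{G}^{q}(\omega)(v,z)=\int_{\delta w_i(z)}^{v}\omega_2(t,z)\frac{dt}{t}$ a form that is smooth on the interior $\{0<v<\delta w_i(z)\}$ of $\cV_i$. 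For small $v_0>0$ set $Z_i^{\circ,v_0}:=\{z\in Z_i^{\circ}\;:\;w_i(z)>v_0/\delta\}$, so that $(v,z)\mapsto z$ restricted to the slice $\{v=v_0\}$ is a section over $Z_i^{\circ,v_0}$. Pulling \eqref{cs.1} back to this slice, and using that $dv$ restricts to $0$ there, I obtain on $Z_i^{\circ,v_0}$ the identity
$$
\omega|_{v=\delta w_i}=\omega_1(v_0,\cdot)-d_{Z_i}\!\left(\int_{\delta w_i(\cdot)}^{v_0}\omega_2(t,\cdot)\,\frac{dt}{t}\right),
$$
in which both $\omega_1(v_0,\cdot)$ and the argument of $d_{Z_i}$ are smooth on $Z_i^{\circ,v_0}$. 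Since $Z_i^{\circ}=\bigcup_{v_0\downarrow 0}Z_i^{\circ,v_0}$, this in particular shows $\omega|_{v=\delta w_i}$ is a genuine smooth closed form on $Z_i^{\circ}$.

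The one analytic step, and the only place the hypothesis is used, is to show that $\omega_1(v_0,\cdot)\to 0$ as $v_0\downarrow 0$, uniformly on compact subsets $K\subset Z_i^{\circ}$. Passing to the variable $\sigma=1/v$, over $K$ the region $\cV_i$ becomes $\{\sigma>N\}\times K$ with $g_{\QFC}=\frac{d\sigma^{2}}{\sigma^{2}}+\frac{g_{Z_i}}{\sigma^{2}}$ and $x^{a}\asymp\sigma^{-a_i}$, so membership of $\omega$ in $x^{a}L^{2}\Omega^{q}(\cV_i,g_{\QFC})$ forces $\int_{N}^{\infty}\!\int_{K}\sigma^{\,2a_i+2q-m_i-1}\,\|\omega_1(\sigma,\cdot)\|^{2}_{g_{Z_i}}\,d\sigma\,dg_{Z_i}<\infty$. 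Here the exponent equals $-2\bigl(\tfrac{m_i}{2}-a_i-q\bigr)-1\ge -1$ precisely because $\tfrac{m_i}{2}-a_i\le q$, and since $\int^{\infty}\sigma^{s}\,d\sigma=\infty$ for every $s\ge-1$, the pointwise norm $\|\omega_1(\sigma,\cdot)\|_{g_{Z_i}}$ cannot have a nonzero limit; combined with the conormality of $\omega$, which via a Sobolev estimate on unit $\sigma$-boxes upgrades this to a uniform pointwise statement, one gets $\|\omega_1(v,z)\|_{g_{Z_i}}\to 0$ uniformly for $z\in K$.

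Finally I would put the pieces together: given a smooth singular $q$-cycle $\Sigma$ in $Z_i^{\circ}$, its image is compact, hence contained in $Z_i^{\circ,v_0}$ for all sufficiently small $v_0$. Integrating the displayed identity over $\Sigma$ and discarding the exact term by Stokes gives $\int_{\Sigma}\omega|_{v=\delta w_i}=\int_{\Sigma}\omega_1(v_0,\cdot)$ for every such $v_0$; letting $v_0\downarrow 0$ and using the uniform decay on the image of $\Sigma$, the right-hand side tends to $0$, and since the left-hand side does not depend on $v_0$ it must be $0$. As $\Sigma$ was arbitrary, $\omega|_{v=\delta w_i}$ pairs trivially with $H_{q}(Z_i^{\circ})$, so it represents $0$ in $H^{q}(Z_i^{\circ})\cong H^{q}(Z_i)$. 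The only genuinely delicate point is the ``weighted $L^{2}$ plus conormal $\Rightarrow$ uniform decay'' step; everything else is soft (restriction and Stokes). Note that only the inequality $\tfrac{m_i}{2}-a_i\le q$ is used, not the lower bound on $\tfrac{m_i}{2}-a_i$.
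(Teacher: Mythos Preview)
Your overall strategy is sound and runs parallel to the paper's: both restrict attention to a compact exhaustion $K\subset Z_i^\circ$ over which the weight simplifies to a pure power of the radial variable, observe that the cohomology class of the tangential piece $\omega_1$ in $H^q(K)$ is independent of the slice parameter, and then use the weighted $L^2$ condition (with exponent $\ge -1$) to force that class to vanish. The paper works in the $(x_i,z)$ coordinates rather than $(v,z)$, but over compact $K$ these are interchangeable since $w_i\asymp 1$ there.

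Your analytic step, however, has a gap. From
\[
\int_N^\infty\!\!\int_K \sigma^{s}\|\omega_1(\sigma,\cdot)\|^2_{g_{Z_i}}\,d\sigma\,dg_{Z_i}<\infty,\qquad s\ge -1,
\]
together with conormality (which yields the same bound for every $\partial_\sigma^j D_z^\alpha\omega_1$), a Sobolev estimate on unit boxes $[n,n+1]\times K$ gives only $\sup_{[n,n+1]\times K}|\omega_1|^2\le C\,n\,a_n$ with $\sum_n a_n<\infty$. This forces $\liminf_n n\,a_n=0$ but \emph{not} $n\,a_n\to 0$ (take $a_n=1/n$ along $n=2^k$ and $a_n=0$ otherwise), so the claimed uniform decay of $\omega_1$ is not established. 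The repair is painless: since $\int_\Sigma\omega_1(v_0,\cdot)$ is already known to be independent of $v_0$, decay along a subsequence suffices, and the $\liminf$ argument above supplies exactly that.

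The paper sidesteps this issue entirely by a cleaner device: rather than making $\omega_1$ itself decay, it takes the harmonic representative $\nu$ (with absolute boundary conditions on $K$) of the constant class $[\omega_1(x_i,\cdot)]\in H^q(K)$. Then $\|\omega_1(x_i,\cdot)\|_{L^2(K)}\ge\|\nu\|_{L^2(K)}$ for every $x_i$, and combining this constant lower bound with the divergence of $\int_0^\delta x_i^{-2\alpha_i-1}\,dx_i$ (where $\alpha_i=a_i+q-\tfrac{m_i}{2}\ge 0$) forces $\nu=0$ in one line. This avoids any Sobolev or subsequence argument and is worth knowing.
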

\begin{proof}
Instead of the coordinates $(v,z)$, we will use the coordinates $(x_i,z)$ in $\cV_i$, so that $v=\delta w_i$ corresponds to $x_i=\delta$.  Write now $\omega$ in terms of this decomposition,
$$
     \omega= \omega_1 + dx_1\wedge \omega_2,
$$
where this time the $\omega_j$ do not involve $dx_1$  and are distinct from those occurring in \eqref{wl2.29}.  In terms of this decomposition,
\begin{equation}
 0=d\omega = d_{Z_i}\omega_1 + dx_i\wedge\left(  \frac{\pa \omega_1}{\pa x_i}- d_{Z_i}\omega_2\right) \; \Longrightarrow \; d_{Z_i}\omega_1=0, \quad \frac{\pa\omega_1}{\pa x_i}=d_{Z_i}\omega_2,
\label{sa.4}\end{equation}
so we see that the cohomology class of $\omega_1\in H^q(Z_i)$ does not depend on $x_i$.   To see that this class must in fact vanish, let us restrict $\omega_1$ to the region $C_{i}$ of $\cV_i$ where $w_i\ge \epsilon$ for some $\epsilon>0$.  Thus,
$$
     C_{i}\cong [0,1)_{x_i} \times K_{i},
$$
where $K_{i}\subset Z_i$ is the compact region of $Z_i$ where $w_i\ge \epsilon$.  In the region $C_{i}$, the $\QFC$-metric \eqref{wl2.26d} is quasi-isometric to the cusp metric
\begin{equation}
   \frac{dx_i^2}{x_i^2}+ x_i^2 g_{K_i},
\label{cs.4}\end{equation}
where $g_{K_i}$ is some (geodesically incomplete) Riemannian metric on the manifold with boundary $K_i$.  By the discussion above, $\omega_1(x_i)$ defines a class in $H^q(K_{i})$ which does not depend on $x_i$.  Putting absolute boundary condition on the boundary $\pa K_i$, let $\nu$ be the harmonic representative of $\omega_1(x_i)$ on $K_i$.  In particular, $\nu$ does not depend on $x_i$ and 
\begin{equation}
     \|\omega_1(x_i)\|_{L^2,g_{K_i}} \ge \|\nu\|_{L^2,g_{K_i}}  \quad \forall x_i,
\label{cs.5}\end{equation}
where $\|\cdot\|_{L^2,g_{K_i}}$ is the $L^2$-norm induced by the metric $g_{K_i}$.  However, since   $\omega \in x^aL^2_{\QFC}\Omega^q(\cV_i)$, when we restrict to $C_i$, we see that
\begin{equation}
    \int_0^{\delta} \|\omega_1(x_i)\|^2_{L^2,g_{K_i}}  x_i^{-2\alpha_i} \frac{dx_i}{x_i}<\infty \quad \mbox{with} \; \alpha_i= a_i+q-\frac{m_i}2\ge 0.
\label{cs.6}\end{equation}
The combination of \eqref{cs.5} and \eqref{cs.6} implies that $\nu\equiv 0$, that is, that the cohomology class of $\omega_1(x_1)$ in $H^q(Z_i)$ must vanish.  
 \end{proof}

Combining all these lemmas yields the following local description of weighted $L^2$-cohomology of a $\QFC$-metric.

\begin{theorem}
Let  $a=(a_1,\ldots, a_{\ell})\in \bbR^{\ell}$ be such that  $a_i\ne \frac{m_i}2-q+1$ for each $q\in \{0,1,\ldots, m_i\}$.  Suppose moreover that for $q$ such that $q-1<\frac{m_i}2-a_i\le q$,  the natural map $\WH_{\QFC}^q(Z_i,\phi, (a_{i+1},\ldots,a_{\ell}))\to H^q(Z_i)$ is an inclusion.  Then for an open set $\cU_i$ as in \eqref{wl2.24} with $B_i$ contractible with smooth boundary, we have that
\begin{equation}
  \WH_{\QFC}^q(\cU_i,\phi,a)= \left\{  \begin{array}{ll} 
        \WH_{\QFC}^q(Z_i,\phi,(a_{i+1},\ldots,a_{\ell})), & \mbox{if} \; q<\frac{m_i}2-a_i; \\
        \{0\}, & \mbox{otherwise}.
  \end{array}  \right.
\label{wl2.26b}\end{equation}
\label{wl2.26}\end{theorem}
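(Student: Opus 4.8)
The plan is to reduce the local computation to the case already settled by Lemmas~\ref{sa.1}, \ref{sa.2} and \ref{sa.3} --- that in which $B_i$ is a point, \ie the region $\cV_i$ --- by stripping off the contractible factor $B_i$, and then to run a case analysis on the degree $q$ showing that, once the weights excluded by the hypothesis are discarded, every $q$ is covered by exactly one of those three lemmas.

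\emph{Step~1 (removing the factor $B_i$).} First I would invoke Lemma~\ref{wl2.17} to compute $\WH^*_{\QFC}(\cU_i,\phi,a)$ from the complex of weighted $\QFB$-conormal forms \eqref{wl2.23b}. Under the identification \eqref{wl2.25}, $c_\phi^{-1}(\cU_i)\cong \cV_i\times B_i$, the model $\QFC$-metric \eqref{wl2.25c} is a genuine Riemannian product of $\bigl(\cV_i,\tfrac{dv^2}{v^2}+v^2 g_{Z_i}\bigr)$ with $(B_i,g_{B_i})$, the weight $x^a$ is pulled back from the first factor, and $\cV_{\QFB}(M)$ restricted to $\cV_i\times B_i$ is generated by the $\QFB$-vector fields of $\cV_i$ together with all smooth vector fields of the compact manifold-with-boundary $B_i$. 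Since $B_i$ is contractible, its absolute $L^2$-de~Rham cohomology --- that of the maximal domain of $d$, which is the domain the definition \eqref{wl2.21b}--\eqref{wl2.22b} imposes in the $B_i$-directions, $\partial B_i$ not being part of $\partial M$ --- is $\bbR$ concentrated in degree $0$. I would then fix a smooth contraction of $B_i$ onto a point $b_0$ and form the associated chain homotopy operator $K$, which acts in the $B_i$-directions only and satisfies $dK+Kd=\Id-\pi^*\iota^*$, where $\pi\colon\cV_i\times B_i\to\cV_i$ is the projection and $\iota\colon\cV_i\hookrightarrow\cV_i\times B_i$ the inclusion at $b_0$. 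Because $B_i$ is compact and neither the weight nor the conical scaling involves the $B_i$-factor, $K$ maps $x^aL^2_{\QFC}\Omega^*(\cV_i\times B_i)$ into itself; and because $K$ commutes with the $\QFB$-vector fields of $\cV_i$ and keeps forms smooth in the $B_i$-directions, it preserves the conormal spaces $x^a\cA_{\QFC}\Omega^*$. Hence $\WH^q_{\QFC}(\cU_i,\phi,a)\cong \WH^q_{\QFC}(\cV_i,\phi,a)$ for all $q$. (Alternatively one could invoke a K\"unneth formula in the spirit of \cite[Corollary~2.34]{Zucker}, the weight living only on the non-compact factor.)

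\emph{Step~2 (case analysis on $\cV_i$).} The hypothesis $a_i\ne\tfrac{m_i}2-q+1$ for all $q\in\{0,\dots,m_i\}$ says precisely that the exponent $\lambda=a_i+(q-1)-\tfrac{m_i}2$ --- which enters the operators $G^q$ and $P_{q-1}$ of the three lemmas and also marks the borderline degree to which none of them applies --- is nonzero in every degree $q\in\{0,\dots,m_i\}$. For each such $q$: if $q<\tfrac{m_i}2-a_i$, Lemma~\ref{sa.2} gives $\WH^q_{\QFC}(\cV_i,\phi,a)\cong \WH^q_{\QFC}(Z_i,\phi,(a_{i+1},\dots,a_\ell))$; if $q-1<\tfrac{m_i}2-a_i\le q$, then Lemma~\ref{sa.3}, together with the standing hypothesis that $\WH^q_{\QFC}(Z_i,\phi,(a_{i+1},\dots,a_\ell))\to H^q(Z_i)$ is injective, gives $\WH^q_{\QFC}(\cV_i,\phi,a)=\{0\}$; and otherwise $\tfrac{m_i}2-a_i\le q-1$, which since $\lambda\ne0$ is in fact strict, so Lemma~\ref{sa.1} gives $\WH^q_{\QFC}(\cV_i,\phi,a)=\{0\}$. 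These three alternatives partition all degrees, the last two together being exactly $q\ge\tfrac{m_i}2-a_i$, \ie the ``otherwise'' branch of \eqref{wl2.26b}; combining this with Step~1 proves the theorem.

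The only real obstacle is Step~1: making the contraction (or K\"unneth) argument rigorous in the weighted $L^2$ \emph{conormal} category. One must verify that $K$ preserves membership in $x^aL^2_{\QFC}\Omega^*(\cV_i\times B_i)$ --- using compactness of $B_i$ and the fact that $x^a$ ignores the $B_i$-directions --- and that it preserves conormality with respect to $\cV_{\QFB}(M)$, which follows from the description of $\cV_{\QFB}$ on the product and the smoothness of the contraction. One must also keep track of the fact that it is the \emph{absolute} cohomology of $B_i$ that intervenes (no boundary condition on $\partial B_i$, since $\partial B_i\not\subset\partial M$), here the trivial case $H^*(B_i)=\bbR$ in degree $0$. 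Taking $B_i$ to be a coordinate ball makes these verifications routine; the rest is bookkeeping, cleanly organized by the hypothesis on $a_i$.
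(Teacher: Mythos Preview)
Your proposal is correct and follows essentially the same route as the paper: reduce from $\cU_i\cong\cV_i\times B_i$ to $\cV_i$ by stripping off the contractible compact factor, then run the three-lemma case analysis on $q$. The only difference is in the mechanism for Step~1: the paper invokes \cite[Theorem~2.29]{Zucker} directly (the K\"unneth formula for $L^2$-cohomology when $d$ has closed range on one factor, here $B_i$ with absolute boundary conditions), whereas you build the chain homotopy $K$ from a contraction by hand and verify it respects the weighted conormal spaces --- with K\"unneth mentioned only as an aside. Both arguments land in the same place; the paper's is shorter, yours is more self-contained and makes explicit the passage through Lemma~\ref{wl2.17} that the paper leaves implicit.
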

\begin{proof}
Since the model $\QFC$-metric \eqref{wl2.25c} corresponds to the Cartesian product of $(\cV_i, \frac{dv^2}{v^2}+ v^2 g_{Z_i})$ with $(B_i, g_{B_i})$ and since the range of the exterior differential on $B_i$ is closed for absolute boundary conditions, we can use \cite[Theorem~2.29]{Zucker} to conclude that 
$$
     \WH_{\QFC}^q(\cU_i,\phi,a)= \WH_{\QFC}^q(\cV_i,\phi,a)   
$$ 
using the fact that $B_i$ is contractible.  Hence, the result follows from Lemma~\ref{sa.1}, Lemma~\ref{sa.2} and Lemma~\ref{sa.3}. 
\end{proof}
This has various consequences.
\begin{corollary}
Let  $a=(a_1,\ldots, a_{\ell})\in \bbR^{\ell}$ be such that  for each $i$ and each $q\in \{0,1,\ldots, m_i\}$, $a_i\ne \frac{m_i}2-q+1$.  Suppose moreover that for each $i$ and $q$ such that $q-1<\frac{m_i}2-a_i\le q$,  the natural map $\WH_{\QFC}^q(Z_i,\phi, (a_{i+1},\ldots,a_{\ell}))\to H^q(Z_i)$ is an inclusion.  Then the cohomology groups $\WH_{\QFC}^q(M,\phi,a)$ are finite dimensional for each $q$.  
\label{sa.6}\end{corollary}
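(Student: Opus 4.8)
\emph{Proof proposal.} The plan is to use Lemma~\ref{wl2.17} to pass to the sheaf-theoretic description of weighted $L^2$-cohomology on the compact stratified space $\hM_\phi$, and then run a Mayer--Vietoris argument over a finite cover adapted to the stratification, with an induction on the depth of $(M,\phi)$ taking care of the links $Z_i$ that show up in the local models. Concretely, by Lemma~\ref{wl2.17} and the complex \eqref{wl2.23b}, $\WH^*_{\QFC}(M,\phi,a)$ is the cohomology of the complex of global sections of the complex of sheaves $x^a\cA_{\QFC,d}\Omega^\bullet$ on $\hM_\phi$; this complex of sheaves is fine (multiplication by the pullback to $M$ of a smooth function on $\hM_\phi$ preserves each $x^a\cA_{\QFC}\Omega^q$, and since the differential of such a pullback has bounded pointwise $\QFC$-norm together with all its $\QFB$-covariant derivatives, the differential $d$ shuffles correctly under such a multiplication), so there is a Mayer--Vietoris long exact sequence for $\WH^*_{\QFC}(\,\cdot\,,\phi,a)$ and $\WH^*_{\QFC}(M,\phi,a)$ is the hypercohomology of $x^a\cA_{\QFC,d}\Omega^\bullet$ on $\hM_\phi$. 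Over the regular part of $\hM_\phi$ this complex of sheaves is moreover a fine resolution of the constant sheaf $\bbR$ (a Poincar\'e lemma within the class of $\QFB$-conormal forms), so $\WH^q_{\QFC}(\cU,\phi,a)\cong H^q(\cU;\bbR)$ for $\cU$ any open subset of the regular part.

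I would then fix a finite open cover $\{V_\alpha\}$ of $\hM_\phi$ adapted to the stratification --- for instance the cover by open stars of the vertices of a sufficiently fine triangulation of $\hM_\phi$ compatible with the stratification, in the spirit of \cite[\S3]{HHM2004} --- such that each nonempty finite intersection $V_{\alpha_0}\cap\cdots\cap V_{\alpha_r}$ is a disjoint union of open sets of one of two types: (i) a contractible open subset of the regular part, or (ii) an open set $\cU_i$ as in \eqref{wl2.24} with $B_i$ contractible and with smooth boundary. For a piece of type (i) one has $\WH^q_{\QFC}(\,\cdot\,,\phi,a)\cong H^q(\mathrm{pt};\bbR)$, hence finite-dimensional; for a piece of type (ii), Theorem~\ref{wl2.26} applies verbatim --- its hypotheses on $a$ and on the injectivity of the maps $\WH^q_{\QFC}(Z_i,\phi,(a_{i+1},\ldots,a_\ell))\to H^q(Z_i)$ being exactly those assumed in the corollary --- and gives that $\WH^q_{\QFC}(\cU_i,\phi,a)$ is $\{0\}$ for $q\ge\frac{m_i}2-a_i$ and equals $\WH^q_{\QFC}(Z_i,\phi,(a_{i+1},\ldots,a_\ell))$ for $q<\frac{m_i}2-a_i$. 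Since the induced fibered-corners structure on $Z_i$ has depth strictly smaller than that of $M$, and since the weight $(a_{i+1},\ldots,a_\ell)$ and the links $Z_j$ ($H_j>H_i$) attached to the boundary hypersurfaces of $Z_i$ are among the data already constrained by the hypotheses of the corollary for $M$, those hypotheses hold for $Z_i$ as well; an induction on depth --- with base case $(M,\phi)$ of depth $0$, where $\hM_\phi$ is a closed manifold and $\WH^*_{\QFC}(M,\phi,a)=H^*(\hM_\phi;\bbR)$ is finite-dimensional --- therefore shows each $\WH^q_{\QFC}(Z_i,\phi,(a_{i+1},\ldots,a_\ell))$ is finite-dimensional. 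Thus every piece in the cover, and every finite intersection, has finite-dimensional weighted $L^2$-cohomology, and since the cover is finite and the cohomological degree is at most $\dim M$, the Mayer--Vietoris spectral sequence of the cover has finite-dimensional $E_1$-page with only finitely many nonzero entries; hence $\WH^*_{\QFC}(M,\phi,a)$ is finite-dimensional.

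The step I expect to be the main obstacle is the construction of the finite cover with the stated intersection property and, more broadly, the verification that $x^a\cA_{\QFC,d}\Omega^\bullet$ behaves well enough (fineness, the Poincar\'e lemma on the regular part, all local pieces being of the two model types) for the Mayer--Vietoris machinery to run; this is standard technology for the cohomology of stratified spaces, but it requires some care with the iterated fibration structures and with the $\QFB$-conormality conditions. A secondary point to verify carefully is that the hypotheses of the corollary really do descend to each link $Z_i$ --- i.e.\ that the dimension data $m_j$ and the links $Z_j$ of the boundary hypersurfaces $H_j>H_i$ of $Z_i$ coincide with the corresponding data for $M$ --- so that the induction on depth is legitimate.
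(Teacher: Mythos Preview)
Your proposal is correct and follows essentially the same approach as the paper: induction on depth, using Theorem~\ref{wl2.26} for the local models $\cU_i$ and Mayer--Vietoris to patch together a finite cover of the compact stratified space $\hM_\phi$. The paper's proof is much terser (three sentences, invoking Mayer--Vietoris long exact sequences rather than the full spectral sequence), but the content is the same; your worries about the cover construction and about the hypotheses descending to the links $Z_i$ are legitimate technical points that the paper leaves implicit, and your verification that the boundary data $(m_j, Z_j)$ for $H_j>H_i$ coincide on $Z_i$ and on $M$ is exactly what makes the induction go through.
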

\begin{proof}
When $Z_i$ is a closed manifold, the weighted $L^2$-cohomology is identified with de Rham cohomology, which is finite dimensional.  Hence proceeding by induction on the depth, we can show using Mayer-Vietoris long exact sequences in weighted $L^2$-cohomology that $\WH_{\QFC}^*(Z_i,\phi,a)$ is finite dimensional for each $i$.  The same argument then shows that $\WH_{\QFC}^*(M,\phi,a)$ is finite dimensional as well.  
\end{proof}
Notice in particular that this corollary implies that the operator
\begin{equation}
 d: x^aL^2_{\QFC,d}\Omega^q(M\setminus \pa M)\to x^aL^2_{\QFC}\Omega^{q+1}(M\setminus \pa M)
\label{sa.7}\end{equation}
has closed range.  Assuming slightly more yields the following.  
\begin{corollary}
For a weight $a\in \bbR^{\ell}$ such that both $a$ and $-a$ satisfy the hypotheses of Corollary~\ref{sa.6}, the operator \eqref{sa.7}
has closed range, as well as it formal adjoint $d^*_a= x^{2a}\circ d^* \circ x^{-2a}$ with respect to the inner product on $x^a L^2_{\QFC}\Omega^*(M\setminus \pa M)$.  Moreover, the self-adjoint extension of the Hodge-deRham operator $\eth_a:= d+ d^*_a$ on $x^aL^2_{\QFC}\Omega^*(M\setminus \pa M)$ is Fredholm.  Finally, there is a natural identification 
\begin{equation}
\WH_{\QFC}^q(M,\phi,\pm a)\cong L^2\cH^q(M\setminus \pa M,g_{\QFC},x^{\pm a})
\label{sa.8b}\end{equation}
and the natural pairing  
$$
     (\omega,\eta)\to \int_{M\setminus \pa M} \omega\wedge \eta
$$
between $x^aL^2\Omega^q(M\setminus M,g)$ and $x^{-a}L^2\Omega^{m-q}(M\setminus M,g)$ induces a non-degenerate pairing (Poincaré duality) between $\WH^q(M,\phi, a)$ and $\WH^{m-q}(M,\phi, -a)$.
\label{sa.8}\end{corollary}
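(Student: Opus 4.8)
The plan is to view the weighted $L^2$-de Rham complexes $\bigl(x^{\pm a}L^2_{\QFC,d}\Omega^\bullet(M\setminus\pa M),d\bigr)$ as Hilbert complexes and to feed in the finiteness supplied by Corollary~\ref{sa.6}. First, $g_{\QFC}$ is a complete metric (near each $H_i$ it carries a $\frac{dv^2}{v^2}$ term, see \eqref{wl2.25c}), so by a standard cutoff argument $d$ has a unique closed extension on $x^{\pm a}L^2\Omega^\bullet$, with domain $x^{\pm a}L^2_{\QFC,d}\Omega^\bullet$, and its Hilbert-space adjoint is precisely the weighted formal adjoint $d^*_{\pm a}=x^{\pm 2a}\circ d^*\circ x^{\mp 2a}$. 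Since both $a$ and $-a$ satisfy the hypotheses of Corollary~\ref{sa.6}, the groups $\WH^q_{\QFC}(M,\phi,\pm a)$ are finite dimensional for every $q$, so $\ran(d)$ has finite codimension in the closed subspace $\ker(d)$ of the next space and is therefore closed; by the closed range theorem $\ran(d^*_{\pm a})$ is closed as well. This gives the first two assertions.

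Next, combining the Kodaira decomposition \eqref{wl2.10} with the closed ranges just obtained yields, in each degree $q$, an $L^2$-orthogonal Hodge decomposition
\[
  x^{\pm a}L^2\Omega^q(M\setminus\pa M)=L^2\cH^q(M\setminus\pa M,g_{\QFC},x^{\pm a})\oplus\ran(d)\oplus\ran(d^*_{\pm a})
\]
with all three summands closed. In particular the reduced and unreduced weighted $L^2$-cohomology coincide, and the identification of the former with weighted $L^2$-harmonic forms (recalled around \eqref{wl2.10}) gives \eqref{sa.8b}. For Fredholmness of $\eth_{\pm a}$, note that completeness of $g_{\QFC}$ makes it essentially self-adjoint on compactly supported smooth forms, so the self-adjoint extension is unambiguous; its kernel is $\bigoplus_q L^2\cH^q(M\setminus\pa M,g_{\QFC},x^{\pm a})$, which is finite dimensional. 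On the orthogonal complement of its kernel, $\eth_{\pm a}$ acts as $d$ on $\overline{\ran(d^*_{\pm a})}$ and as $d^*_{\pm a}$ on $\overline{\ran(d)}$; each of these restrictions is injective (the Hodge decomposition forces $\ker(d)\cap\overline{\ran(d^*_{\pm a})}=\{0\}$, and symmetrically) and has closed range, hence is bounded below, and the orthogonality of the two pieces yields a uniform lower bound for $\eth_{\pm a}$ there. Thus $\eth_{\pm a}$ has closed range with finite-dimensional kernel and cokernel, i.e. it is Fredholm.

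Finally, for Poincar\'e duality the plan is to realize the pairing on cohomology through the Hodge star. The assignment $(\omega,\eta)\mapsto\int_{M\setminus\pa M}\omega\wedge\eta$ is finite for $\omega\in x^aL^2\Omega^q$ and $\eta\in x^{-a}L^2\Omega^{m-q}$ by Cauchy--Schwarz, and it descends to cohomology: if $\omega=d\alpha$ with $\alpha\in x^aL^2_{\QFC,d}\Omega^{q-1}$ and $\eta\in x^{-a}L^2\Omega^{m-q}$ is closed, then $\alpha\wedge\eta$ and $d(\alpha\wedge\eta)=d\alpha\wedge\eta$ both lie in $L^1$, so integrating against a cutoff exhaustion of the complete manifold $M\setminus\pa M$ and expanding it gives $\int_{M\setminus\pa M}\omega\wedge\eta=0$, the boundary term vanishing because $\alpha\wedge\eta\in L^1$; the same holds with the roles of $a$ and $-a$ interchanged. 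A direct computation (using $d^*=\pm*d*$) shows that $\omega\mapsto *_{g_{\QFC}}(x^{-2a}\omega)$ carries $\ker d\cap\ker d^*_a$ into $\ker d\cap\ker d^*_{-a}$, hence restricts to an isomorphism from $L^2\cH^q(M\setminus\pa M,g_{\QFC},x^a)$ onto $L^2\cH^{m-q}(M\setminus\pa M,g_{\QFC},x^{-a})$; in particular $\dim\WH^q(M,\phi,a)=\dim\WH^{m-q}(M,\phi,-a)$. Representing classes by harmonic forms and using that $\int_{M\setminus\pa M}\omega\wedge *_{g_{\QFC}}(x^{-2a}\omega)=\int_{M\setminus\pa M}x^{-2a}\,|\omega|_{g_{\QFC}}^2\,dg_{\QFC}=\|\omega\|_{x^aL^2}^2>0$ whenever $\omega\ne0$, the pairing is non-degenerate on $\WH^q(M,\phi,a)$, hence, the two spaces having equal finite dimension, a perfect pairing.

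The deep input is entirely contained in Corollary~\ref{sa.6}; the only steps here that require genuine care are the weight bookkeeping in checking that $\omega\mapsto *_{g_{\QFC}}(x^{-2a}\omega)$ intertwines $d$ and $d^*_{\pm a}$ for the two weights, and the $L^1$-Stokes argument on the non-compact complete manifold $M\setminus\pa M$ that makes the wedge pairing well defined on cohomology. Everything else is the standard Hodge theory of a Fredholm Hilbert complex.
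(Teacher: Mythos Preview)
Your proof is correct and follows the same approach as the paper: finite dimensionality from Corollary~\ref{sa.6} forces closed range of $d$, the Kodaira decomposition then yields \eqref{sa.8b} and Fredholmness of $\eth_a$, and the weighted Hodge star $x^{-2a}*$ realizes Poincar\'e duality. The only minor differences are that the paper obtains closed range of $d^*_a$ via the identity $d^*=\pm *d*$ (relating it to $d$ acting on $x^{-a}L^2$, where the hypothesis on $-a$ applies) rather than via the closed range theorem, and does not spell out the $L^1$-Stokes argument you give for well-definedness of the wedge pairing on cohomology.
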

\begin{proof}
As already noted, the operator \eqref{sa.7} has closed range.  Since $d^*= \pm * d *$ where $*$ is the Hodge star operator, we see also that $d^*_a$ has closed range from the fact that the operator 
$$
d: x^{-a}L^2_{\QFC,d}\Omega^q(M\setminus \pa M)\to x^{-a}L^2_{\QFC}\Omega^{q+1}(M\setminus \pa M)
$$
has closed range.  Hence, the Kodaira decomposition \eqref{wl2.10} induces the natural identification \eqref{sa.8b}.  
Since $d$ and $d^*_a$ have orthogonal images in $x^aL^2_{\QFC}\Omega^*(M\setminus \pa M)$, the domain of $\eth_a$ is the intersection of the domains of $d$ and $d^*_a$.  We also see from the Kodaira decomposition that $\eth_a$ must have closed range.  Moreover, its kernel corresponds to $L^2\cH^*(M\setminus\pa M, g_{\QFC},x^a)$, which is finite dimensional by \eqref{sa.8b}.  The same is true for its cokernel by formal self-adjointness, so that $\eth_a$ must be Fredholm as claimed.  Finally, a quick computation shows that the operator $*_{a}:=x^{-2a}*$ induces an isomorphism
$$
    *_a: L^2\cH^q(M\setminus\pa M,g_{\QFC},x^a)\to L^2\cH^{m-q}(M\setminus\pa M,g_{\QFC},x^{-a})
$$
with inverse $(-1)^{q(m-q)}x^{2a}*$, establishing Poincaré duality between $\WH^q(M,\phi, a)$ and $\WH^{m-q}(M,\phi, -a)$.
\end{proof}

For more specific choices of weights, these weighted $L^2$-cohomology groups can be identified with intersection cohomology.  
\begin{corollary}
Suppose that $\dim Z_{i}>0$ for each $i$, so $\widehat{M}_{\phi}$ has no singular stratum of codimension $1$.  Let $\mathfrak{p}$ be a perversity and let $a=(a_1,\ldots,a_{\ell})$ be the weight defined by $a_i= \mathfrak{p}(m_i+1)-\frac{m_i-2}{2}-\epsilon$ for some fixed $0<\epsilon<\frac12$.  Suppose that for each $i$ and $q$ such that $q-1<\frac{m_i}2-a_i\le q$, the natural map 
$$
\WH^q_{\QFC}(Z_i,\phi, (a_{i+1},\ldots,a_{\ell}))\to H^q(Z_i)
$$ 
is an inclusion.  Then there is an isomorphism
$$
        \WH^*_{\QFC}(M,\phi,a)\cong \IH^*_{\mathfrak{p}}(\widehat{M}_{\phi}).
$$
\label{wl2.35}\end{corollary}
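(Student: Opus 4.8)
The plan is to show that the complex of sheaves $x^a\cA_{\QFC,d}\Omega^*$ on $\widehat M_\phi$ --- whose complex of global sections computes $\WH^*_{\QFC}(M,\phi,a)$ by Lemma~\ref{wl2.17} and the discussion around \eqref{wl2.23b} --- is quasi-isomorphic to the intersection cohomology complex $\mathbf{IC}^\bullet_{\mathfrak p}$ with $\bbR$-coefficients on the stratified pseudomanifold $\widehat M_\phi$ (this is a pseudomanifold precisely because $\dim Z_\ell>0$ rules out codimension-one strata). Each sheaf $x^a\cA_{\QFC}\Omega^q$ is fine: multiplication by $c_\phi^*\rho$ for $\rho\in\CI(\widehat M_\phi)$ preserves the class of weighted $\QFB$-conormal forms, since the $\QFB$-derivatives of $c_\phi^*\rho$ are bounded, so partitions of unity subordinate to finite open covers of the compact space $\widehat M_\phi$ exist. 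Hence the hypercohomology $\bbH^*(\widehat M_\phi;x^a\cA_{\QFC,d}\Omega^*)$ equals $\WH^*_{\QFC}(M,\phi,a)$, and a quasi-isomorphism with $\mathbf{IC}^\bullet_{\mathfrak p}$ yields the asserted identification with $\IH^*_{\mathfrak p}(\widehat M_\phi)$.

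By the characterization theorem for $\mathbf{IC}^\bullet_{\mathfrak p}$, it suffices to check that $x^a\cA_{\QFC,d}\Omega^*$ is bounded below, constructible for the stratification $\{\widehat S_i\}$ (with finite-dimensional stalks, which follows as in Corollary~\ref{sa.6}), restricts to the ordinary smooth de Rham complex on the regular part $M\setminus\pa M$ (the normalization axiom: there the $\QFC$-metric is a smooth metric and $x^a$ a smooth positive function, so the weighted conormal $L^2$ de Rham complex is locally the ordinary de Rham complex by the Poincaré lemma on contractible sets), and satisfies the support and attaching/cosupport conditions at each singular stratum. The heart of the matter is the stalk computation at a point $p_i$ of the regular part $s_i$ of $\widehat S_i$, a stratum of codimension $c_i=m_i+1$ with link the stratified space $\widehat Z_i$ associated to $Z_i$. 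Taking the open sets $\cU_i$ of \eqref{wl2.24} with $B_i$ a small contractible ball with smooth boundary and passing to the direct limit as $\cU_i$ shrinks to $p_i$, Theorem~\ref{wl2.26} gives that the stalk cohomology in degree $q$ is $\WH^q_{\QFC}(Z_i,\phi,(a_{i+1},\dots,a_\ell))$ when $q<\tfrac{m_i}2-a_i$ and $0$ otherwise. With the chosen weight, $\tfrac{m_i}2-a_i=(m_i-1)-\mathfrak p(m_i+1)+\epsilon$, so the truncation degree is exactly the perversity bound prescribed at a codimension-$c_i$ stratum, the weight $a_i$ having been chosen --- including the half-dimension shift $-\tfrac{m_i-2}2$ converting the $L^2$ normalization to the topological one --- precisely so that this holds. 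Moreover, arguing by induction on the depth of $\widehat M_\phi$ (the base case being a closed manifold, where $\WH^*_{\QFC}=H^*_{\mathrm{dR}}=\IH^*_{\mathfrak p}$) and noting that the boundary hypersurfaces of $Z_i$ are among those of $M$, so that both the perversity formula for $(a_{i+1},\dots,a_\ell)$ and the injectivity hypotheses are inherited, we get $\WH^q_{\QFC}(Z_i,\phi,(a_{i+1},\dots,a_\ell))\cong\IH^q_{\mathfrak p}(\widehat Z_i)$, the intersection cohomology of the link. Constructibility along $s_i$ follows since the local model \eqref{wl2.25c} is the same over any contractible chart of $s_i$ and the fiberwise harmonic data organize into flat bundles as in the discussion preceding Theorem~\ref{qfb.12}. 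This matches the stalks of $\mathbf{IC}^\bullet_{\mathfrak p}$ and establishes the support condition.

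I expect the attaching/cosupport condition to be the main obstacle. One route is to verify it directly: it amounts to controlling the restriction map from a punctured local model onto its link, and this is exactly where the explicit primitives $G^q,\widetilde G^q$ and the delicate borderline analysis of Lemma~\ref{sa.3} and Lemma~\ref{cs.2} enter, and where the assumed injectivity of $\WH^q_{\QFC}(Z_i,\phi,(a_{i+1},\dots,a_\ell))\to H^q(Z_i)$ is used --- it is what forces the borderline class $\omega|_{v=\delta w_i}$ to be exact. An alternative, in the spirit of \cite{HHM2004}, is to deduce the attaching condition from the support condition applied to the complex built from the weight $-a$, which under the complementary-perversity duality of Goresky and MacPherson (and the $\QFC$ Hodge-star Poincaré duality of Corollary~\ref{sa.8}) corresponds to the dual complex; here one must track degrees carefully through the Hodge star so that the perversity in the conclusion comes out to be $\mathfrak p$ itself. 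Either way, once all of the axioms are in place the characterization theorem identifies $x^a\cA_{\QFC,d}\Omega^*$ with $\mathbf{IC}^\bullet_{\mathfrak p}$ up to quasi-isomorphism, and taking hypercohomology completes the proof.
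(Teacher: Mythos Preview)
Your approach is correct in spirit but takes a genuinely different route from the paper. The paper's proof is a two-line reduction: it plugs the weight $a_i=\mathfrak p(m_i+1)-\tfrac{m_i-2}{2}-\epsilon$ into Theorem~\ref{wl2.26} to rewrite the local computation as \eqref{wl2.36}, observes that this matches the local formula for $\IH^*_{\mathfrak p}$, and then invokes \cite[Proposition~1]{HHM2004} together with induction on depth. That proposition is a Mayer--Vietoris/five-lemma bootstrap: two fine complexes of sheaves on $\widehat M_\phi$ with the same local cohomology on a basis of distinguished neighborhoods have isomorphic global cohomology. This avoids entirely the Deligne--Goresky--MacPherson axiomatics you invoke. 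Your route through the characterization theorem is more structural and would also work, but it is heavier machinery than is needed here, and the paper's shortcut buys considerable economy.

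Your discussion of the attaching axiom is where your proposal is weakest. You flag it as ``the main obstacle'' and sketch two routes without completing either. In fact the attaching map is not where the delicate analysis lives: it concerns degrees \emph{below} the threshold $q<\tfrac{m_i}{2}-a_i$, and there Lemma~\ref{sa.2} already shows that every class in $\WH^q_{\QFC}(\cV_i,\phi,a)$ is represented by a form constant in $v$, so restriction to the link is an isomorphism on the nose. The borderline Lemmas~\ref{sa.3} and~\ref{cs.2} are about vanishing \emph{at} the threshold (the support condition), not attaching. Your dual route via the weight $-a$ would require $-a$ to satisfy the same injectivity hypotheses, which is not assumed. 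None of this invalidates your strategy, but it does mean you have misidentified where the work is; once this is straightened out, the axioms follow from what is already proved, and your argument goes through.
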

\begin{proof}
With this choice of weight, the statement of Theorem~\ref{wl2.26} can be reformulated as 
\begin{equation}
  \WH^q(\cU_i,\phi,a)= \left\{  \begin{array}{ll} 
        \WH^q(Z_i,\phi,(a_{i+1},\ldots, a_{\ell})), & \mbox{if} \; q<m_i-1-\mathfrak{p}(m_i+1)+\epsilon; \\
        \{0\}, & \mbox{otherwise}.
  \end{array}  \right.
\label{wl2.36}\end{equation}
This is the same behavior as $\IH^*_{\mathfrak{p}}(\cU_i)$, \cf \cite[(8)]{HHM2004}.  Thus, proceeding by induction on the depth of $(M,\phi)$, we can use \cite[Proposition~1]{HHM2004} with \eqref{wl2.36} to obtain the result.  

\end{proof}

Note that in this corollary, the upper middle perversity $\overline{\mathfrak{m}}$ defined by $\overline{\mathfrak{m}}(\ell)=\lfloor \frac{\ell-1}2\rfloor$ (also) corresponds to choosing the weight $a=(\epsilon,\ldots, \epsilon)$, while lower middle perversity $\underline{\mathfrak{m}}$ defined by  $\underline{\mathfrak{m}}(\ell)=\lfloor \frac{\ell-2}2\rfloor$ (also) corresponds to the weight $a=(-\epsilon,\ldots, -\epsilon)$.  In this case, with some further assumptions on the dimensions of the fibers of the fiber bundles $\phi_i: H_i\to S_i$, we can also compute $L^2$-cohomology.

\begin{corollary}
Suppose that for all $i\in \{1,\ldots,\ell\}$, $\dim Z_{i}$ is odd and the natural map 
$$\WH^{\frac{m_i+1}2}_{\QFC}(Z_i,\phi, 0)\to H^{\frac{m_i+1}2}(Z_i)$$
 is an inclusion.  Then
\begin{equation}
      L^2\cH^*(M\setminus \pa M, g_{\QFC})\cong \WH^*_{\QFC}(M,\phi,0)\cong \IH^*_{\overline{\mathfrak{m}}}(\widehat{M}_{\phi})= \IH^*_{\underline{\mathfrak{m}}}(\widehat{M}_{\phi}).      
\label{l2qfc.1b}\end{equation}      
\label{l2qfc.1}\end{corollary}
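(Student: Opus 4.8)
The plan is to chain together three identifications: (i) the topological one $\WH^*_{\QFC}(M,\phi,0)\cong \IH^*_{\overline{\mathfrak m}}(\hM_{\phi})$, (ii) the coincidence $\IH^*_{\overline{\mathfrak m}}(\hM_{\phi})=\IH^*_{\underline{\mathfrak m}}(\hM_{\phi})$, and (iii) the Hodge‑theoretic one $L^2\cH^*(M\setminus\pa M,g_{\QFC})\cong \WH^*_{\QFC}(M,\phi,0)$. Identity (ii) is immediate: since $\dim Z_i=m_i$ is odd for every $i$, each singular stratum $\widehat S_i$ of $\hM_{\phi}$ has codimension $m_i+1$, which is even, and the perversities $\overline{\mathfrak m}(\ell)=\lfloor(\ell-1)/2\rfloor$ and $\underline{\mathfrak m}(\ell)=\lfloor(\ell-2)/2\rfloor$ agree on every even $\ell$; hence they coincide on all codimensions occurring in $\hM_{\phi}$, and so do the corresponding intersection cohomologies. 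Note also that $\dim Z_i$ odd forces $\dim Z_i\ge1$, so $\dim Z_{\ell}>0$ and $\hM_{\phi}$ has no codimension‑one stratum, as required to be in the situation of Corollary~\ref{wl2.35}.

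For identity (i) I would argue by induction on the depth of $(M,\phi)$. In depth $0$ there are no singular strata, $\hM_{\phi}=M$ is closed, and both sides reduce to the ordinary (de Rham) cohomology of $M$. For the inductive step, each link $Z_i$ inherits an iterated fibration structure of strictly smaller depth whose own links are again among the $Z_j$, hence odd‑dimensional, and for which the injectivity of $\WH^{(m_j+1)/2}_{\QFC}(Z_j,\phi,0)\to H^{(m_j+1)/2}(Z_j)$ is exactly part of the assumed data (the restricted weight remaining $0$ throughout); so by induction $\WH^*_{\QFC}(Z_i,\phi,0)\cong \IH^*_{\overline{\mathfrak m}}(\widehat Z_i)=\IH^*_{\underline{\mathfrak m}}(\widehat Z_i)$. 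I would then apply Theorem~\ref{wl2.26} with weight $a=0$: the forbidden‑weight condition $a_i\ne \frac{m_i}2-q+1$ holds automatically because $m_i$ odd makes $\frac{m_i}2-q+1$ a half‑integer, while the inequality $q-1<\frac{m_i}2\le q$ forces $q=\frac{m_i+1}2$, so the only inclusion hypothesis required is precisely the assumed injectivity of $\WH^{(m_i+1)/2}_{\QFC}(Z_i,\phi,0)\to H^{(m_i+1)/2}(Z_i)$. Theorem~\ref{wl2.26} then gives, on the model open sets $\cU_i$, that $\WH^q_{\QFC}(\cU_i,\phi,0)$ equals $\WH^q_{\QFC}(Z_i,\phi,0)$ for $q\le\frac{m_i-1}2$ and vanishes otherwise; since $\frac{m_i-1}2=\lfloor\frac{m_i}2\rfloor=\overline{\mathfrak m}(m_i+1)=\underline{\mathfrak m}(m_i+1)$, this is exactly the local behavior of $\IH^*_{\overline{\mathfrak m}}(\cU_i)$ (compare the proof of Corollary~\ref{wl2.35} and \cite[(8)]{HHM2004}), the link contributions matching by the inductive hypothesis.

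To globalize I would follow the argument in the proof of Corollary~\ref{wl2.35}: the range of $d$ on $x^0L^2_{\QFC,d}\Omega^*$ is closed by Corollary~\ref{sa.6} (whose hypotheses for $a=-a=0$ we have just verified), so weighted $L^2$-cohomology is computed by the sheaf complex \eqref{wl2.23b} and admits Mayer--Vietoris long exact sequences; comparing these with the Mayer--Vietoris sequences for $\IH^*_{\overline{\mathfrak m}}$ via \cite[Proposition~1]{HHM2004} and the five lemma yields $\WH^*_{\QFC}(M,\phi,0)\cong \IH^*_{\overline{\mathfrak m}}(\hM_{\phi})$. For identity (iii) I would invoke Corollary~\ref{sa.8} with $a=0$: since $0=-0$ satisfies the hypotheses of Corollary~\ref{sa.6}, the operators $d$ and $d^*_0$ have closed range, $\eth_0$ is Fredholm, and $\WH^q_{\QFC}(M,\phi,0)\cong L^2\cH^q(M\setminus\pa M,g_{\QFC},x^0)=L^2\cH^q(M\setminus\pa M,g_{\QFC})$. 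Chaining (i)--(iii) finishes the proof.

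The main point requiring care is the marginal degree $q=\frac{m_i+1}2$. For weights $\pm\epsilon$ this degree lies strictly above the vanishing threshold and is killed automatically by Lemma~\ref{sa.1}, but for the weight $0$ it falls exactly into the boundary regime $q-1<\frac{m_i}2\le q$ of Theorem~\ref{wl2.26}, where vanishing is produced only by Lemma~\ref{sa.3} and hence only under the injectivity hypothesis; one must check that this is precisely the hypothesis the corollary assumes and that it is inherited, at the same degree and weight $0$, by every link $Z_i$, so that the induction closes. Everything else --- the coincidence of $\overline{\mathfrak m}$ and $\underline{\mathfrak m}$ on even codimensions, the Mayer--Vietoris globalization, and the passage to $L^2$-harmonic forms via Corollary~\ref{sa.8} --- is routine given the results already established.
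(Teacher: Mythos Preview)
Your proof is correct and follows essentially the same approach as the paper's: verify that the weight $a=0$ satisfies the hypotheses of Theorem~\ref{wl2.26} (the forbidden-weight condition being automatic since $m_i$ is odd, and the inclusion hypothesis arising only at $q=\frac{m_i+1}{2}$, which is exactly what is assumed), obtain the local computation \eqref{l2qfc.2} matching the local behavior of middle-perversity intersection cohomology, globalize via \cite[Proposition~1]{HHM2004} by induction on depth, and invoke Corollary~\ref{sa.8} with $a=0$ for the Hodge-theoretic identification. The only cosmetic difference is that for the equality $\IH^*_{\overline{\mathfrak m}}=\IH^*_{\underline{\mathfrak m}}$ the paper simply says $\hM_{\phi}$ is Witt (odd-dimensional links), whereas you argue directly that the two perversities agree at all even codimensions; both are standard and equivalent here.
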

\begin{proof}
First, since $\dim Z_i$ is odd for all $i$, the stratified space $\widehat{M}_{\phi}$ is Witt, so there is a natural identification
$$
\IH^*_{\overline{\mathfrak{m}}}(\widehat{M}_{\phi})= \IH^*_{\underline{\mathfrak{m}}}(\widehat{M}_{\phi}).
$$
More importantly, the hypothesis that $\dim Z_i$ is odd and that $\WH^{\frac{m_i+1}2}_{\QFC}(Z_i,\phi, 0)\to H^{\frac{m_i+1}2}(Z_i)$ is an inclusion 
for all $i$ allows us to take $a=(0,\ldots,0)$ in Theorem~\ref{wl2.26}, Corollary~\ref{sa.6} and Corollary~\ref{sa.8}, giving in particular the identification 
$$
   L^2\cH^*(M\setminus \pa M, g_{\QFC})\cong \WH^*_{\QFC}(M,\phi,0).
$$
Finally, to obtain the identification with upper middle intersection cohomology, notice that \eqref{wl2.26b} gives in this case
\begin{equation}
\WH^q(\cU_i,\phi,0)= \left\{  \begin{array}{ll} 
        \WH^q(Z_i,\phi,0), & \mbox{if} \; q<\frac{m_i}{2}; \\
        \{0\}, & \mbox{otherwise},
  \end{array}  \right.
\label{l2qfc.2}\end{equation}
which, since $m_i$ is odd, is the same as the local behavior of upper middle and lower middle intersection cohomology \cite[(8)]{HHM2004}.  The result therefore follows from \cite[Proposition~1]{HHM2004}.
\end{proof}
\begin{corollary}
If $\dim Z_i$ is odd and $\IH^{\frac{m_i+1}2}_{\underline{\mathfrak{m}}}(\widehat{(Z_i)}_{\phi})=\{0\}$ for all $i$, then 
$$
L^2\cH^*(M\setminus \pa M, g_{\QFC})\cong \WH^*_{\QFC}(M,\phi,0)\cong \IH^*_{\overline{\mathfrak{m}}}(\widehat{M}_{\phi})= \IH^*_{\underline{\mathfrak{m}}}(\widehat{M}_{\phi}).
$$
\label{wl2.41}\end{corollary}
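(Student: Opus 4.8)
The plan is to derive this from Corollary~\ref{l2qfc.1} by induction on the depth of $(M,\phi)$. In the base case $\depth(M)=0$, the space $M$ is a closed manifold, $\widehat{M}_{\phi}=M$, the $\QFC$-metric is an ordinary smooth metric, and all four groups in the statement coincide with the de Rham cohomology $H^*(M)$, so there is nothing to prove. For the inductive step, observe first that the hypothesis is inherited by the links. Each fiber $Z_i$ of $\phi_i\colon H_i\to S_i$ is a manifold with fibered corners of depth strictly less than $\depth(M)$, whose boundary hypersurfaces are those induced by the hypersurfaces $H_j$ with $H_j>H_i$, and the link of the corresponding stratum of $\widehat{(Z_i)}_{\phi}$ is precisely $\widehat{(Z_j)}_{\phi}$. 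Hence the links of $\widehat{(Z_i)}_{\phi}$ form a sub-collection of $\{\widehat{(Z_1)}_{\phi},\ldots,\widehat{(Z_{\ell})}_{\phi}\}$, so $(Z_i,\phi)$ again satisfies the hypotheses of the present corollary.

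By the inductive hypothesis applied to each $Z_i$ we obtain
\[
\WH^*_{\QFC}(Z_i,\phi,0)\cong \IH^*_{\underline{\mathfrak{m}}}(\widehat{(Z_i)}_{\phi}),
\]
and in particular, using the assumed vanishing $\IH^{\frac{m_i+1}{2}}_{\underline{\mathfrak{m}}}(\widehat{(Z_i)}_{\phi})=\{0\}$,
\[
\WH^{\frac{m_i+1}{2}}_{\QFC}(Z_i,\phi,0)=\{0\}.
\]
Consequently the natural map $\WH^{\frac{m_i+1}{2}}_{\QFC}(Z_i,\phi,0)\to H^{\frac{m_i+1}{2}}(Z_i)$ is injective for every $i$, simply because its source is the zero group. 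Since in addition $\dim Z_i$ is odd for every $i$, the hypotheses of Corollary~\ref{l2qfc.1} are met for $(M,\phi)$, and that corollary gives
\[
L^2\cH^*(M\setminus \pa M, g_{\QFC})\cong \WH^*_{\QFC}(M,\phi,0)\cong \IH^*_{\overline{\mathfrak{m}}}(\widehat{M}_{\phi})=\IH^*_{\underline{\mathfrak{m}}}(\widehat{M}_{\phi}),
\]
completing the induction.

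The only genuinely substantive point is the identification of the links of $\widehat{(Z_i)}_{\phi}$ with the stratified spaces $\widehat{(Z_j)}_{\phi}$ for $H_j>H_i$ (together with the accompanying fact that $\depth(Z_i)<\depth(M)$), which guarantees that the hypothesis is passed down correctly and the induction is well-founded; once that is granted the argument is a formal consequence of Corollary~\ref{l2qfc.1} and the triviality of a homomorphism out of the zero group.
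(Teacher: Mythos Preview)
Your proof is correct and follows essentially the same approach as the paper. The paper's own proof is a one-line remark that ``one can check recursively with respect to the depth of $\widehat{M}_{\phi}$ that the hypotheses of Corollary~\ref{l2qfc.1} are satisfied''; you have simply unpacked this recursion explicitly, correctly noting that the links of $\widehat{(Z_i)}_{\phi}$ are the $\widehat{(Z_j)}_{\phi}$ with $H_j>H_i$, so the hypotheses descend, and that the inductive identification $\WH^{\frac{m_i+1}{2}}_{\QFC}(Z_i,\phi,0)\cong \IH^{\frac{m_i+1}{2}}_{\underline{\mathfrak{m}}}(\widehat{(Z_i)}_{\phi})=\{0\}$ makes the injectivity condition in Corollary~\ref{l2qfc.1} vacuous.
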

\begin{proof}
In this case, one can check recursively with respect to the depth of $\widehat{M}_{\phi}$ that the hypotheses of Corollary~\ref{l2qfc.1} are satisfied, so that \eqref{l2qfc.1b} holds.
\end{proof}
\begin{remark}
In \cite{HR}, a related result was obtained for iterated fibered cusp metrics.  
\label{wl2.37}\end{remark}

\section{$L^2$-cohomology of the Nakajima metric} \label{hs.0}

We can apply the results of the previous section to study the weighted $L^2$-cohomology of the Nakajima metric $g_n$ on the Hilbert scheme  or Douady space $\Hilb_0^n(\bbC^2)$ of $n$ points on $\bbC^2$.  Recall that $\Hilb_0^n(\bbC^2)$ is a crepant resolution
\begin{equation}
  \pi: \Hilb_0^n(\bbC^2)\to (\bbC^2)^n_0/\mathbf{S}_n
\label{hs.1}\end{equation}  
of the quotient of 
$$
   (\bbC^2)^n_0= \{q\in (\bbC^2)^n\; | \; \sum_j q_j=0\}
$$ 
by the action of the symmetric group $\mathbf{S}_n$ given by
$$
     \sigma\in \mathbf{S}_n, \; q\in (\bbC^2)^n_0, \quad \sigma\cdot q:= (q_{\sigma^{-1}(1)},\ldots, q_{\sigma^{-1}(n)}).
$$
For instance, when $n=2$, $(\bbC^2)^2_0=\bbC^2$ and $\Hilb_0^2(\bbC^2)=T^*\bbC\bbP^1$ is just the standard crepant resolution $T^*\bbC\bbP^1\to \bbC^2/\bbZ_2$ obtained by blowing up the origin of $\bbC^2/\bbZ_2$.  

By the work of Nakajima \cite{Nakajima}, the Hilbert scheme $\Hilb_0^n(\bbC^2)$ admits a natural complete hyperKähler metric $g_n$ obtained via the hyperKähler quotient construction of \cite{HKLR1987}.  In \cite{Joyce}, Joyce introduced the notion of $\QALE$-metrics and constructed a hyperKähler example on $\Hilb_0^n(\bbC^2)$ by solving an appropriate complex Monge-Ampère equation.  It was subsequently shown by Carron \cite{Carron2011} that these two hyperKähler metrics coincide, showing in particular that the Nakajima metric is a $\QALE$-metric.     

In particular, $(\Hilb_0^n(\bbC^2), g_n)$ admits a $\QAC$-compactification.  To describe it, let us review from \cite{Carron2011} how the crepant resolution \eqref{hs.1} can be performed iteratively using the notion of local product resolution of \cite{Joyce}.  Let $\mf{p}=(I_1,\ldots, I_k)$ be a partition of $\{1,\ldots,n\}$ and consider the associated vector space
\begin{equation}
 V_{\mf{p}}= \{ q\in (\bbC^2)^n_0 \; | \; \forall \ell\in \{1,\ldots,k\},  \; q_i=q_j   \;\forall i,j\in I_{\ell}\}.
\label{hs.2}\end{equation} 
There are corresponding subgroups
\begin{equation}
\begin{gathered}
    A_{\mf{p}}= \{\gamma\in \mathbf{S}_n\; | \; \gamma \cdot  q=q  \; \forall q\in V_{\mf{p}}\}\cong \mathbf{S}_{n_1}\times \cdots\times \mathbf{S}_{n_k}, \\
    B_{\mf{p}}= \{\gamma\in \mathbf{S}_n\; | \; \gamma \cdot  V_{\mf{p}}= V_{\mf{p}}\}.
\end{gathered}    
\label{hs.3}\end{equation}
Clearly, $A_{\mf{p}}$ is a normal subgroup of $B_{\mf{p}}$, so the quotient $N_{\mf{p}}:= B_{\mf{p}}/A_{\mf{p}}$ is a group as well.  
\begin{example}
For $i,j\in \{1,\ldots,n\}$ two distincts elements, consider the partition
\begin{equation}
      \mf{p}_{i,j}= \{\{i,j\}, \{k_1\},\ldots, \{k_{n-2}\}\},
\label{hs.4b}\end{equation}
where $\{k_1,\ldots, k_{n-2}\}=\{1,\ldots,n\}\setminus \{i,j\}$.  In this case, $V_{i,j}:=V_{\mf{p}_{i,j}}= \{q\in (\bbC^2)^n_0\; | \; q_i=q_j\}$, $A_{i,j}:=A_{\mf{p}_{i,j}}\cong \mathbf{S}_2\cong \bbZ_2$, $B_{i,j}:=B_{\mf{p}_{i,j}}= \mathbf{S}_2\times \mathbf{S}_{n-2}$ and $N_{i,j}=N_{\mf{p}_{i,j}}= \mathbf{S}_{n-2}$.  In fact, $B_{i,j}= A_{i,j}\times N_{i,j}$ is just a product.  In general however, $B_{\mf{p}}$ is only a semi-direct product of $A_{\mf{p}}$ and $N_{\mf{p}}$.  For instance,  when $n=4$ and $\mf{p}=\{\{1,2\},\{3,4\}\}$, then $A_{\mf{p}}=\mathbf{S}_2\times \mathbf{S}_2=\bbZ_2\times \bbZ_2$,
$N_{\mf{p}}=\bbZ_2$ acts by swapping or not the two clusters $\{1,2\}$ and $\{3,4\}$ of $\mf{p}$, and $B_{\mf{p}} =A_{\mf{p}}\rtimes N_{\mf{p}}$ is only a semi-direct product.   
\label{hs.4}\end{example}

Let also 
$
W_{\mf{p}}= V_{\mf{p}}^{\perp} \cong \bigoplus^{k}_{\ell=1}(\bbC^2)^{n_{\ell}}_0
$
be the orthogonal orthogonal complement of $V_{\mf{p}}$, where $n_{\ell}=|I_{\ell}|$.  Now, $A_{\mf{p}}$ acts on $W_{\mf{p}}$ and the quotient $W_{\mf{p}}/A_{\mf{p}}= \bigoplus_{\ell=1}^k (\bbC^2)^{n_{\ell}}/\mathbf{S}_{n_{\ell}}$ admits a natural crepant resolution, namely
\begin{equation}
\pi_{\mf{p}}: \Hilb^{\mf{p}}_0(\bbC^2):= \prod_{\ell}^{k} \Hilb^{n_{\ell}}_0(\bbC^2)\to W_{\mf{p}}/A_{\mf{p}}.  
\label{hs.5}\end{equation}
On the other hand, the action of $B_{\mf{p}}$ on $W_{\mf{p}}\times V_{\mf{p}}=(\bbC^2)^n_0$ descends to an action of $N_{\mf{p}}=B_{\mf{p}}/A_{\mf{p}}$ on $(W_{\mf{p}}\times V_{\mf{p}})/A_{\mf{p}}= (W_{\mf{p}}/A_{\mf{p}})\times V_{\mf{p}}$, which in turns lift to an action of $N_{\mf{p}}$ on $\Hilb^{\mf{p}}_0(\bbC^2)\times V_{\mf{p}}$.   In fact, $N_{\mf{p}}$ acts on each factor of  $\Hilb^{\mf{p}}_0(\bbC^2)\times V_{\mf{p}}$ separately, so the quotient $\left(\Hilb^{\mf{p}}_0(\bbC^2)\times V_{\mf{p}}\right)/N_{\mf{p}}$ is the total space of a flat  orbibundle 
\begin{equation}
\xymatrix{
     \Hilb^{\mf{p}}_0(\bbC^2) \ar[r] &  \left(\Hilb^{\mf{p}}_0(\bbC^2)\times V_{\mf{p}}\right)/N_{\mf{p}}\ar[d] \\
       &   V_{\mf{p}}/N_{\mf{p}}.
}
\label{hs.5b}\end{equation}

\begin{example}
For $\mf{p}=\mf{p}_{i,j}$, the action of $N_{\mf{p}}\cong \mathbf{S}_{n-2}$ on $\Hilb^{\mf{p}_{i,j}}_0(\bbC^2)\times V_{\mf{p}_{i,j}}$ is trivial on the first factor, while on the second factor is the action of $\mathbf{S}_{n-2}$ on the variables complementary to $q_i$ and $q_j$ in $V_{\mf{p}_{i,j}}$.  The corresponding orbibundle in \eqref{hs.5b} is therefore trivial.  If instead we take $n=4$ and $\mf{p}=\{\{1,2\},\{3,4\}\}$, then the corresponding action of $N_{\mf{p}}\cong \bbZ_2$ on $\Hilb^{\mf{p}}_0(\bbC^2)=\Hilb^2_0(\bbC^2)^2= (T^*\bbC\bbP^1)^2$ is generated by the involution
$$
     \Hilb^2_0(\bbC^2)^2\ni (z_1,z_2) \mapsto (z_2,z_1)\in \Hilb^2_0(\bbC^2)^2,
$$ 
so the flat orbibundle \eqref{hs.5b} has some non-trivial holonomy and is therefore non-trivial.
\label{hs.6}\end{example}

To see how these resolutions sit inside $\Hilb^n_0(\bbC^2)$, recall that there is a partial order\footnote{We use the convention opposite to the one of \cite{Carron2011} to be consistent with the partial order of the boundary hypersurfaces of the $\QAC$-compactification of $\Hilb^n_0(\bbC^2)$; see \eqref{hs.9b}.} on partitions given by
\begin{equation}
  \mf{p}\le \mf{q} \; \Longleftrightarrow \; V_{\mf{p}}\le V_{\mf{q}}.
\label{hs.6b}\end{equation}
  In other words, $\mf{p}\le \mf{q}$ if and only if $\mf{q}$ is a refinement of $\mf{p}$.  With respect to this partial order, there is a unique maximal partition $\mf{p}_{\infty}:=\{\{1\},\ldots, \{n\}\}$, as well as a unique minimal partition $\mf{p}_0=\{\{1,\ldots,n\}\}$.  Notice that the partitions $\mf{p}_{i,j}$ of \eqref{hs.4b} are precisely those that are just below the maximal one.  In fact, for $\mf{p}\ne \mf{p}_{\infty}$, we have that 
$$
      V_{\mf{p}}= \bigcap_{\mf{p}\le \mf{p}_{i,j}} V_{i,j}.
$$ 
 
Now, set 
$$
  \Delta_{\mf{p}}= \{ (i,j)\in \{1,\ldots,n\}^2 \; | \; \mf{p}\nleq \mf{p}_{i,j}\}
$$
and consider the set
$$
           \Sigma_{\mf{p}}:=\left(\bigcup_{(i,j)\in \Delta_{\mf{p}}} V_{i,j}\right)/A_{\mf{p}}.
$$
For $R>0$, consider the neighborhood 
$$
      T_{\mf{p}}= \{q\in (\bbC^2)^n_0 \; | \; \exists (i,j)\in \Delta_{\mf{p}}, \; |q_i-q_j|<R\}/A_{\mf{p}}
$$
of $\Sigma_{\mf{p}}$.    With respect to the resolution \eqref{hs.5}, we can then consider the neighborhood
$$
   \cU_{\mf{p}}=\left( (\pi_{\mf{p}}\times \Id)^{-1}(T_{\mf{p}}) \right)/N_{\mf{p}}.
$$
Then there are local biholomorphisms 
\begin{equation}
\nu_{\mf{p}}: \left(\left( (\bbC^2)^n_0/A_{\mf{p}} \right)\setminus T_{\mf{p}}\right)/N_{\mf{p}}\to (\bbC^2)^n_0/\mathbf{S}_n
\label{hs.7}\end{equation}
and 
\begin{equation}
\mu_{\mf{p}}: \left( \left( \Hilb^{\mf{p}}_0(\bbC^2)\times V_{\mf{p}}  \right)/N_{\mf{p}} \right)\setminus \cU_{\mf{p}}\to \Hilb^n_0(\bbC^2)
\label{hs.8}\end{equation}
inducing the commutative diagram
\begin{equation}
\xymatrix{
      \left( \left( \Hilb^{\mf{p}}_0(\bbC^2)\times V_{\mf{p}}  \right)/N_{\mf{p}} \right)\setminus \cU_{\mf{p}} \ar[r]^-{\mu_{\mf{p}}} \ar[d]^{\pi_{\mf{p}}\times \Id} & \Hilb^{n}_0(\bbC^2)\ar[d]^{\pi} \\
  \left(\left( (\bbC^2)^n_0/A_{\mf{p}} \right)\setminus T_{\mf{p}}\right)/N_{\mf{p}}\ar[r]^-{\nu_p} &  (\bbC^2)^n_0/\mathbf{S}_n.  
}
\label{hs.9}\end{equation}
These commutative diagrams precisely indicate how the resolution $\pi: \Hilb^n_0(\bbC^2)\to (\bbC^2)^n_0/\mathbf{S}_n$ can be decomposed in terms of local product resolutions, at least away from the origin.

There is of course a natural action of $\mathbf{S}_n$ on partitions, namely, for $\sigma \in \mathbf{S}_n$, 
$$
    \sigma\cdot \mf{p}= \mf{q} \;  \Longleftrightarrow \; \sigma\cdot V_{\mf{p}}= V_{\mf{q}}.
$$  
This induces the equivalence relation
$$
    \mf{p} \sim \mf{q}  \; \Longrightarrow \; \exists \sigma\in \mathbf{S}_n, \; \sigma\cdot \mf{p}= \mf{q}.
$$
In other words, if $\chi: (\bbC^2)^n_0\to (\bbC^2)^n_0/\mathbf{S}_n$ is the quotient map, then $\mf{p}\sim\mf{q}$ if and only if $\chi(V_{\mf{p}})=\chi(V_{\mf{q}})$.  Under this equivalence relation, an equivalence class corresponds to a partition of $n$ indistinguishable points.  

Denoting by $(\tHilb^n_0(\bbC^2),\phi_n)$ the $\QAC$-compactification of $(\Hilb^n_0(\bbC^2),g_n)$, we see from \eqref{hs.9} that each equivalence class $[\mf{p}]$ of a non minimal partition $\mf{p}$ corresponds to a boundary hypersurface $H_{[\mf{p}]}$ of $\tHilb^n_0(\bbC^2)$.  In fact, $[\mf{p}]\mapsto H_{[\mf{p}]}$ gives a one-to-one correspondence between (non minimal) partitions of $n$ indistinguishable points and the boundary hypersurfaces of $\tHilb^n_0(\bbC^2)$.  Moreover, the partial order on the boundary hypersurfaces of $\tHilb^n_0(\bbC^2)$ is the one induced by the one on partitions,
\begin{equation}
     H_{[\mf{p}]}\le H_{\mf{[q]}}\; \Longleftrightarrow \; [\mf{p}]\le [\mf{q}] \; \Longleftrightarrow \; \exists \mf{p}\in [\mf{p}], \mf{q}\in [\mf{q}] \; \mbox{such that} \; \mf{p}\le \mf{q}.
\label{hs.9b}\end{equation}

To describe the associated fiber bundle on $H_{[\mf{p}]}$ coming from the iterated fibration structure, notice that the vector space $V_{\mf{p}}$ can also be seen as a cone with cross-section $L_{\mf{p}}$ a sphere of real dimension $2\dim_{\bbC}V_{\mf{p}}-1$.  The action of $N_{\mf{p}}$ induces one on this cross-section, so that 
$V_{\mf{p}}/N_{\mf{p}}$ can be regarded as a cone with cross-section $\overline{s}_{\mf{p}}:= L_{\mf{p}}/N_{\mf{p}}$.  This cross-section is singular, but it has at most orbifold singularities, so it is a stratified space admitting a resolution by a manifold with fibered corners that we will denote $S_{\mf{p}}$.  This is precisely the base of the fiber bundle 
\begin{equation}
  \phi_{[\mf{p}]}: H_{[\mf{p}]}\to S_{[\mf{p}]},
\label{hs.10}\end{equation}
whose fiber is then the $\QAC$-compactification $\tHilb^{\mf{p}}_0(\bbC^2)$ of $\Hilb^{\mf{p}}_0(\bbC^2)= \Hilb^{n_{1}}_0(\bbC^2)\times\cdots\times \Hilb^{n_{k}}_0(\bbC^2)$ equipped with the $\QALE$-metric given by the product of Nakajima metrics $g_{\mf{p}}:=g_{n_1}\times g_{n_2}\times \cdots\times g_{n_k}$.  Indeed, by \cite{KR3}, $g_{\mf{p}}$ is again a $\QALE$-metric and its $\QAC$-compactification  $\tHilb^{\mf{p}}_0(\bbC^2)$ can be obtained from the Cartesian product
$$
 \tHilb^{n_{1}}_0(\bbC^2)\times\cdots\times \tHilb^{n_{k}}_0(\bbC^2)
$$
by blowing-up (in the sense of Melrose \cite{MelroseAPS}) certain corners.  Let us denote by $\phi^{\mf{p}}$ the corresponding iterated fibration structure.  By \cite{KR3}, for each boundary hypersurface of $\tHilb^{\mf{p}}_0(\bbC^2)$, the fibers of the induced fiber bundle is of the form $\tHilb^{\mf{q}}_0(\bbC^2)$ for some refined partition $\mf{q}\ge \mf{p}$.

We are now ready to apply Theorem~\ref{wl2.26} to the $\QAC$-compactification $(\tHilb^{n}_0(\bbC^2),\phi_n)$.  Since the corresponding $\QFC$-metric is in fact a $\QCyl$-metric, we will use the notation
$$
\WH^*_{\QCyl}(\tHilb^n_0(\bbC^2),\phi_n,a):= \WH^*_{\QFC}(\tHilb^n_0(\bbC^2),\phi_n,a).
$$

\begin{corollary}
  Fix $0<\epsilon<\frac12$ and let $\mf{p}$ be a partition of $\{1,\ldots,n\}$.  If $a$ is the multiweight equal to $\epsilon$ for each boundary hypersurfaces of $\tHilb^{\mf{p}}_0(\bbC^2)$, then 
\begin{equation}
  \WH^*_{\QCyl}(\tHilb^{\mf{p}}_0(\bbC^2),\phi^{\mf{p}},a)\cong H^*_c(\operatorname{Hilb}_0^{\mf{p}}(\bbC^2)) \quad \mbox{and} \quad 
  \WH^*_{\QCyl}(\tHilb^{\mf{p}}_0(\bbC^2),\phi^{\mf{p}},-a)\cong H^*(\operatorname{Hilb}_0^{\mf{p}}(\bbC^2)). 
\label{sa.10c}\end{equation} 
In particular, if $\mf{p}=\mf{p}_0=\{\{1,\ldots,n\}\}$, this gives  
\begin{equation}
  \WH^*_{\QCyl}(\tHilb^n_0(\bbC^2),\phi_n,a)\cong H^*_c(\operatorname{Hilb}_0^n(\bbC^2)) \quad \mbox{and} \quad 
  \WH^*_{\QCyl}(\tHilb^n_0(\bbC^2),\phi_n,-a)\cong H^*(\operatorname{Hilb}_0^n(\bbC^2)). 
\label{sa.10b}\end{equation}
\label{sa.10}\end{corollary}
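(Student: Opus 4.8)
The plan is to prove the more general statement \eqref{sa.10c} for every partition $\mathfrak p$ by induction on $n$ (equivalently, on the depth of $\tHilb^{\mathfrak p}_0(\bbC^2)$), the case $n=1$ being trivial; \eqref{sa.10b} is then the case $\mathfrak p=\mathfrak p_0$. The main inputs are: (i) Theorem~\ref{wl2.26}, which describes the weighted $L^2$-cohomology of a $\QFC$-metric in the local models \eqref{wl2.24}; (ii) the fact, from \cite{KR3} and the discussion of §\ref{hs.0}, that every fiber $Z_i$ of a boundary fibration of $\tHilb^{\mathfrak p}_0(\bbC^2)$ is the $\QAC$-compactification $\tHilb^{\mathfrak q}_0(\bbC^2)$ of a product $\operatorname{Hilb}^{\mathfrak q}_0(\bbC^2)=\prod_\ell\operatorname{Hilb}^{n_\ell}_0(\bbC^2)$ with each $n_\ell<n$ whenever $\mathfrak q$ is non-minimal; and (iii) Nakajima's vanishing theorem \cite[Corollary~5.10]{Nakajima}, $H^q(\operatorname{Hilb}^n_0(\bbC^2))=0$ for $q$ above the middle degree, which via Künneth gives $H^q(\operatorname{Hilb}^{\mathfrak q}_0(\bbC^2))=0$ for $q>\tfrac{m_i}{2}$, where $m_i=\dim Z_i$, and dually $H^q_c(\operatorname{Hilb}^{\mathfrak q}_0(\bbC^2))=0$ for $q<\tfrac{m_i}{2}$ (note $m_i$ is divisible by $4$).

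The core step is the identification with absolute cohomology, i.e. the multiweight $a=(-\epsilon,\ldots,-\epsilon)$. At each boundary hypersurface the critical degree in Theorem~\ref{wl2.26} (the $q$ with $q-1<\tfrac{m_i}{2}-a_i\le q$) equals $\tfrac{m_i}{2}+1$, and by (iii) the group $\WH^{m_i/2+1}_{\QCyl}(Z_i,\ldots,-\epsilon)$, which by the inductive hypothesis is $H^{m_i/2+1}(\operatorname{Hilb}^{\mathfrak q}_0(\bbC^2))$, vanishes; so the inclusion hypothesis of Theorem~\ref{wl2.26} holds vacuously. Theorem~\ref{wl2.26} then yields $\WH^q_{\QCyl}(\cU_i,\ldots,-\epsilon)=\WH^q_{\QCyl}(Z_i,\ldots,-\epsilon)=H^q(\operatorname{Hilb}^{\mathfrak q}_0(\bbC^2))$ for $q\le\tfrac{m_i}{2}$, while both sides vanish for $q>\tfrac{m_i}{2}$; hence the local weighted $L^2$-cohomology coincides with the ordinary cohomology of $c_\phi^{-1}(\cU_i)\setminus\pa M$, which deformation retracts onto $\operatorname{Hilb}^{\mathfrak q}_0(\bbC^2)$. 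Since, as recalled around Lemma~\ref{wl2.17}, $\WH^*_{\QCyl}(\cdot,\phi,-\epsilon)$ is computed by a complex of fine sheaves on the compact space $\widehat M_\phi$, comparing stalks on a basis of opens (a ball in the regular part, or a model neighbourhood $\cU_i$) shows this complex is quasi-isomorphic to $Rj_*\underline{\bbR}$ for the inclusion $j\colon\widehat M_\phi^{\reg}\cong\operatorname{Hilb}^{\mathfrak p}_0(\bbC^2)\hookrightarrow\widehat M_\phi$, so taking global sections gives $\WH^*_{\QCyl}(\tHilb^{\mathfrak p}_0(\bbC^2),\phi^{\mathfrak p},-\epsilon)\cong H^*(\operatorname{Hilb}^{\mathfrak p}_0(\bbC^2))$. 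A Mayer--Vietoris induction on the depth, as in the proof of Corollary~\ref{sa.6}, may replace the sheaf language.

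For $a=(\epsilon,\ldots,\epsilon)$ the same scheme applies, the critical degree now being the middle degree $\tfrac{m_i}{2}$: Theorem~\ref{wl2.26} gives $\WH^q_{\QCyl}(\cU_i,\ldots,\epsilon)=0$ for $q\ge\tfrac{m_i}{2}$, and by (iii) also for $q<\tfrac{m_i}{2}$ (there $\WH^q_{\QCyl}(Z_i,\ldots,\epsilon)\cong H^q_c(\operatorname{Hilb}^{\mathfrak q}_0(\bbC^2))=0$), so the sheaf complex is quasi-isomorphic to the extension by zero $j_!\underline{\bbR}$, whose hypercohomology on the compact $\widehat M_\phi$ is $H^*_c(\widehat M_\phi^{\reg})=H^*_c(\operatorname{Hilb}^{\mathfrak p}_0(\bbC^2))$; alternatively one deduces this from the $-\epsilon$ case via the Poincaré duality of Corollary~\ref{sa.8} and ordinary Poincaré duality on $\operatorname{Hilb}^{\mathfrak p}_0(\bbC^2)$. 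The one point that is not automatic, and the main obstacle, is that both Theorem~\ref{wl2.26} at the middle degree and Corollary~\ref{sa.8} require the natural map $H^{m_i/2}_c(\operatorname{Hilb}^{\mathfrak q}_0(\bbC^2))\to H^{m_i/2}(\operatorname{Hilb}^{\mathfrak q}_0(\bbC^2))$ to be injective for the fibers $Z_i$. I would carry this injectivity along as a third clause of the induction and propagate it as follows: having established, at the current stage, that $\WH^{m_i/2}_{\QCyl}(\tHilb^{\mathfrak p}_0(\bbC^2),\phi^{\mathfrak p},\pm\epsilon)$ computes $H^{m_i/2}_c$ resp. $H^{m_i/2}$ of $\operatorname{Hilb}^{\mathfrak p}_0(\bbC^2)$, one identifies the comparison map with the map induced by the inclusion $x^{\epsilon}L^2\hookrightarrow x^{-\epsilon}L^2$ on harmonic representatives and runs the integration-by-parts argument of Segal--Selby \cite[Lemma~4.1]{Segal-Selby} (as in the proof of Theorem~\ref{hs.12}): a middle-degree $x^{\epsilon}$-harmonic form $\omega$ whose class dies in $\WH^{m_i/2}_{\QCyl}(\cdots,-\epsilon)$ is $d\eta$ with $\eta\in x^{-\epsilon}L^2$, whence $\|\omega\|^2_{x^{\epsilon}L^2}=\langle d\eta,\omega\rangle_{x^{\epsilon}L^2}=\langle\eta,d^*_{\epsilon}\omega\rangle=0$, the boundary term vanishing because the pointwise product of $\eta$ with $x^{2\epsilon}*\omega$ decays at infinity. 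Finally Künneth (again using (iii), so that middle cohomology of a product is the tensor product of the middle cohomologies) reduces the required injectivity to the single factors $\operatorname{Hilb}^{n_\ell}_0(\bbC^2)$ with $n_\ell<n$, where it is part of the inductive hypothesis, closing the induction.
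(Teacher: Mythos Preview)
Your approach is essentially the same as the paper's: induct on $\dim\Hilb^{\mf{p}}_0(\bbC^2)$, use Theorem~\ref{wl2.26} together with Nakajima's vanishing above middle degree to identify the local weighted $L^2$-cohomology with absolute (resp.\ compactly supported) cohomology of the fibers, and then pass from local to global via Mayer--Vietoris (your sheaf formulation with $Rj_*$ and $j_!$ is an equivalent packaging).

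The one point on which you diverge is the injectivity of $H^{m_i/2}_c(Z_i)\to H^{m_i/2}(Z_i)$ needed to apply Theorem~\ref{wl2.26} and Corollary~\ref{sa.8} for the weight $+\epsilon$. The paper simply asserts this map is an isomorphism and then invokes Poincar\'e duality; you instead carry it as an inductive clause and propose a Segal--Selby integration-by-parts argument. Your scheme is correct in structure, but the step $\langle d\eta,\omega\rangle_{x^{\epsilon}L^2}=\langle\eta,d^*_{\epsilon}\omega\rangle$ is not immediate since $\eta$ lies only in $x^{-\epsilon}L^2$, not in the domain of $d$ on $x^{\epsilon}L^2$; to make it rigorous you should run a cutoff approximation $\eta_k=\psi(-\log v-k)\eta$ exactly as in the proof of Proposition~\ref{do.5} and check that the commutator term $d\psi\wedge\eta$ pairs to zero with $\omega$ in the limit.
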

\begin{proof}
For this choice of multiweight $a$, the statement of Theorem~\ref{wl2.26} can be reformulated as 
\begin{equation}
\WH_{\QCyl}^q(\cU_i,\phi,-a)= \left\{  \begin{array}{ll} 
        \WH_{\QCyl}^q(Z_i,\phi,(-a_{i+1},\ldots,-a_{\ell})), & \mbox{if} \; q\le \frac{m_i}2; \\
        \{0\}, & \mbox{otherwise}.
  \end{array}  \right.
\label{sa.11}\end{equation}
Now, by \cite[Corollary~5.10]{Nakajima}, we know that $\operatorname{Hilb}_0^n(\bbC^2)$ has no cohomology above the middle degree for all $n$.  By Künneth formula, the same is true for $\Hilb^{\mf{p}}_0(\bbC^2)$ for all partitions $\mf{p}$.  Since for each $i$,  $Z_i$ is $\tHilb^{\mf{q}}_0(\bbC^2)$ for some refined partition $\mf{q}\ge \mf{p}$,   we see proceeding by induction on $\dim\Hilb^{\mf{p}}_0(\bbC^2)$ that we can assume already that 
$$
       \WH^q(Z_i,\phi,(-a_{i+1},\ldots,-a_{\ell}))= H^q(Z_i),
$$ 
so that Theorem~\ref{wl2.26} can be applied and implies that 
$$
     \WH_{\QCyl}^q(\cU_i,\phi,-a)= H^q(Z_i)    
$$
since $Z_i$ has no cohomology above the middle degree.  
Hence, on the stratified space associated to 
$$
(\tHilb^{\mf{p}}_0(\bbC^2),\phi^{\mf{p}}),
$$ 
the weighted $L^2$-cohomology associated to the $\QCyl$-metric and the multiweight $-a$ has the same local behavior as absolute cohomology on $\Hilb^n_0(\bbC^2)$.  Using the five-lemma and commutative diagrams of Mayer-Vietoris long exact sequences, one can therefore show that the natural map 
\begin{equation}
   \WH^*_{\QCyl}(\tHilb^{\mf{p}}_0(\bbC^2),\phi^{\mf{p}},-a)\to H^*(\operatorname{Hilb}_0^{\mf{p}}(\bbC^2))
 \label{sa.12}\end{equation}
is an isomorphism.  For the multiweight $a$, using that the map $H_c^{\frac{m_i}2}(Z_i)\to H^{\frac{m_i}2}(Z_i)$ is an isomorphism, one can dualize the argument, or more simply use Poincaré duality to see that the map 
\begin{equation}
   H_c^*(\operatorname{Hilb}_0^{\mf{p}}(\bbC^2))\to  \WH^*_{\QCyl}(\tHilb^{\mf{p}}_0(\bbC^2),\phi^{\mf{p}},a)
\label{sa.13}\end{equation}
dual to \eqref{sa.12} is also an isomorphism.  
\end{proof}

This can be used to compute the $L^2$-cohomology of $(\Hilb^n_0(\bbC^2),g_n)$.  For this, we need to invoke the decay of $L^2$-harmonic forms of \cite{KR1}. 

\begin{proposition}
For $n\ge 2$, there exists $\epsilon>0$ and a $\QALE$-metric $\hat{g}_n$ quasi-isometric to $g_n$ such that the space of $L^2$-harmonic forms on $(\Hilb^n_0(\bbC^2),\hat{g}_n)$ is finite dimensional and included in $v^{\epsilon}L^2\Omega^*(\Hilb^n_0(\bbC^2),\hat{g}_n)$, where $v$ is a total boundary defining function for $\tHilb^n_0(\bbC^2)$. 
\label{nak.6}\end{proposition}
\begin{proof}
This is a consequence Corollary~\ref{qfb.12d}.  First, by the vanishing theorem of Hitchin \cite{Hitchin}, the fibers of \eqref{hs.10} have only non-trivial $L^2$-cohomology in middle degree.  As discussed, for any partition $\mathfrak{p}>\mathfrak{p}_0$, the stratified space $\overline{s}_{\mathfrak{p}}= L_{\mathfrak{p}}/N_{\mathfrak{p}}$ is indeed the quotient of a sphere by a finite group of isometries $N_{\mathfrak{p}}$.  Furthermore,
$$
       \dim_{\bbR} S_{\mathfrak{p}}= \dim_{\bbR} L_{\mathfrak{p}}= 2\dim_{\bbC}V_{\mathfrak{p}}-1\ge 3,
$$
where $\dim_{\bbC}V_{\mathfrak{p}}$ is a positive even integer.  Similarly, for partitions $\mathfrak{p}>\mathfrak{q}>\mathfrak{p}_0$, we have that
$$
       \dim_{\bbR} S_{\mathfrak{p}}-\dim_{\bbR} S_{\mathfrak{q}}-1= 2(\dim_{\bbC}V_{\mathfrak{p}}-\dim_{\bbC}V_{\mathfrak{q}})-1\ge 2(2)-1=3.
$$
Hence, all the hypotheses of Corollary~\ref{qfb.12d} are satisfied.  Applying it yields the result.
\end{proof}

Corollary~\ref{sa.10} and Proposition~\ref{nak.6} can then be combined to give a proof of the Vafa-Witten conjecture \cite{Vafa-Witten}.

\begin{theorem} For all $n\ge 2$,
\begin{equation}
       L^2\cH^*(\Hilb^n_0(\bbC^2),g_n)\cong \Im ( H^*_c(\Hilb^n_0(\bbC^2))\to H^*(\Hilb^n_0(\bbC^2))).
\label{hs.12b}\end{equation}\label{hs.12}\end{theorem}
\begin{proof}
By the vanishing result of Hitchin \cite{Hitchin} and \cite[Corollary~5.10]{Nakajima}, we only need to show that \eqref{hs.12b} holds in middle degree, that is, in degree $2n-2$.  In other words, by the result of \cite[\S~1.3]{Anderson} or \cite[Lemma~1.4]{Segal-Selby}, we need to show that the natural injective map
\begin{equation}
    \Im ( H^*_c(\Hilb^n_0(\bbC^2))\to H^*(\Hilb^n_0(\bbC^2)))\to L^2\cH^*(\Hilb^n_0(\bbC^2),g_n)\cong L^2\cH^*(\Hilb^n_0(\bbC^2),\hat{g}_n)
\label{hs.12c}\end{equation} 
is also surjective.  To see this, it suffices to show by Corollary~\ref{sa.10} and the conformal invariance of the $L^2$-norm of middle degree forms, that the map
\begin{equation}
    \Im (\WH^*_{\QCyl}(\tHilb^n_0(\bbC^2),\phi_n,a)\to \WH^*_{\QCyl}(\tHilb^n_0(\bbC^2),\phi_n,-a) )\to L^2\cH^*(\Hilb^n_0(\bbC^2),\hat{g}_n)
\label{hs.12c}\end{equation} 
is surjective, where $a$ is the multiweight given by $a_i=\epsilon$ for all $i$ with $\epsilon>0$ sufficiently small.  However, by Proposition~\ref{nak.6}, a harmonic form
$\omega\in L^2\cH^*(\Hilb^n_0(\bbC^2),\hat{g}_n)$ defines a class in $(\WH^*_{\QCyl}(\tHilb^n_0(\bbC^2),\phi_n,a)$ and is therefore in the image of \eqref{hs.12c}, showing that the map is surjective.
\end{proof}

\section{$L^2$-cohomology of quasi-asymptotically conical metrics of depth 2}   \label{do.0}

In this section, we consider a $\QAC$-metric $g_{\QAC}$ of depth 2, that is, the corresponding manifold with fibered corners $(M,\phi)$ is of depth 2.  We make the following assumption on $g_{\QAC}$.

\begin{assumption}
For each submaximal boundary hypersurface $H_i$, we suppose that each fiber $Z_i$ of  $\phi_i: H_i\to S_i$   is at least $4$-dimensional with boundary $\pa Z_i$ such that its universal cover $\widetilde{\pa Z_i}$ is a closed manifold which is a rational homology sphere. 
\label{do.1}\end{assumption}

We will make use of the following basic fact about the $L^2$-cohomology of $b$-metrics.

\begin{lemma}
Let $Z$ be a compact manifold with boundary $\pa Z$ and let $x\in \CI(Z)$ be a boundary defining function.  If $g_{b}$ is a $b$-metric on $Z\setminus \pa Z$, then 
\begin{equation}
   \WH^q(Z\setminus \pa Z, g_b, x^a)\cong \left\{ \begin{array}{ll} H^q(Z), & a<0, \\ H^q_c(Z), & a>0.  \end{array} \right.
\label{do.2b}\end{equation}
Moreover, if $\pa Z$ is a rational homology sphere and $a\ne 0$, then the natural map 
\begin{equation}
     \WH^q(Z\setminus\pa Z,g_{b},x^a)\to H^q(Z)
\label{do.2c}\end{equation}
is injective unless $a>0$ and $q=\dim Z$.  
\label{do.2}\end{lemma}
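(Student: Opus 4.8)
The plan is to reduce everything to the well-understood $b$-calculus picture near $\pa Z$, following Melrose \cite{MelroseAPS}. First I would recall the model: near $\pa Z$ the metric $g_b$ is quasi-isometric to $\frac{dx^2}{x^2} + g_{\pa Z}$ on $(0,\delta)_x \times \pa Z$, and since weighted $L^2$-cohomology depends only on the quasi-isometric class (as noted after \eqref{wl2.10}), I may assume $g_b$ is exactly of this product form near the boundary. The first claim \eqref{do.2b} is then the standard computation: using a Mayer-Vietoris decomposition of $Z$ into a collar $(0,\delta)_x\times \pa Z$ and the interior (or, more directly, running the half-line argument as in Lemma~\ref{sa.1} and Lemma~\ref{sa.2} with $m_i = \dim \pa Z$ replaced by $0$ since there is no fiber $Z_i$, i.e. in the rank-one case the cross-section is just $\pa Z$), one gets that for $a<0$ the weighted $L^2$-cohomology allows forms that are constant in $x$ near the boundary, recovering $H^q(Z)$, while for $a>0$ only forms vanishing at the boundary survive, recovering $H^q_c(Z)$. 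This is exactly the content of \cite[Chapter 6]{MelroseAPS}; I would cite it rather than reprove it.

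For the injectivity statement \eqref{do.2c}, the point is to analyze the natural map $\WH^q(Z\setminus \pa Z, g_b, x^a)\to H^q(Z)$. When $a<0$, \eqref{do.2b} already identifies the source with $H^q(Z)$ and the map is the identity, hence injective; so the only real work is the case $a>0$. There \eqref{do.2b} identifies the source with $H^q_c(Z)$ and the map becomes the canonical map $H^q_c(Z)\to H^q(Z)$, whose kernel I must show is trivial unless $q = \dim Z$. This is where the rational homology sphere hypothesis on $\pa Z$ enters. The long exact sequence of the pair $(Z,\pa Z)$ reads
\begin{equation}
\cdots \to H^{q-1}(\pa Z) \to H^q_c(Z) \to H^q(Z) \to H^q(\pa Z)\to \cdots,
\end{equation}
where I use $H^q_c(Z)\cong H^q(Z,\pa Z)$. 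The kernel of $H^q_c(Z)\to H^q(Z)$ is the image of $H^{q-1}(\pa Z)$. Since $\pa Z$ is a rational homology sphere of dimension $\dim Z - 1$, $H^{q-1}(\pa Z)$ vanishes (rationally) unless $q-1 = 0$ or $q - 1 = \dim Z - 1$, i.e. unless $q = 1$ or $q = \dim Z$. The case $q=1$ must be handled separately: the image of $H^0(\pa Z)\to H^1_c(Z)$ is the kernel of $H^0_c(Z)\to H^0(\pa Z)$ pushed around the sequence, and since $H^0_c(Z)\to H^0(\pa Z)$ is injective when $Z$ is connected (a locally constant compactly supported function on a connected manifold with boundary is zero), the connecting map $H^0(\pa Z)\to H^1_c(Z)$ has image equal to $H^0(\pa Z)/H^0_c(Z)$; if $Z$ is connected and $\pa Z$ is connected this is zero, so no kernel appears in degree $1$ either. (If $\pa Z$ or $Z$ is disconnected one argues componentwise; since $\pa Z$ is assumed to be a rational homology \emph{sphere} it is in particular connected, so this is automatic.) Thus the only surviving possibility is $q = \dim Z$ with $a>0$, exactly the stated exception.

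The main obstacle I anticipate is bookkeeping rather than depth: one must be careful that the identifications in \eqref{do.2b} are compatible with the natural map to $H^q(Z)$ — i.e. that under the isomorphism $\WH^q(Z\setminus\pa Z, g_b, x^a)\cong H^q_c(Z)$ for $a>0$ the map \eqref{do.2c} really is the canonical forget-supports map, not something twisted. This is standard but deserves a sentence: the isomorphism is realized by representing a weighted $L^2$-class by a smooth form supported away from $\pa Z$ (possible since $a>0$ forces decay, and a Mayer-Vietoris / cutoff argument as in the proof of Theorem~\ref{nak.1} produces a compactly supported representative in the same class), and such a form obviously maps to the image of its compactly-supported de Rham class in $H^q(Z)$. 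With that compatibility in hand, the exact-sequence computation above finishes the proof. I would also remark that since $\pa Z$ is a rational homology sphere, working with real (or rational) coefficients throughout is harmless for the cohomology statements, which is all that is needed for the $L^2$-theory.
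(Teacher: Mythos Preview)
Your proposal is correct and follows essentially the same route as the paper: a local computation near $\pa Z$ (the paper invokes Zucker's K\"unneth formula where you cite Melrose) plus Mayer--Vietoris for \eqref{do.2b}, and the long exact sequence of the pair $(Z,\pa Z)$ for \eqref{do.2c}. One small slip: in your $q=1$ discussion you write $H^0_c(Z)$ where you mean $H^0(Z)$ (the relevant map in the exact sequence is $H^0(Z)\to H^0(\pa Z)$, whose cokernel controls the kernel of $H^1_c(Z)\to H^1(Z)$); the paper sidesteps this by simply noting $H^0_c(Z)=\{0\}$ and that $H^q_c(Z)\to H^q(Z)$ is an isomorphism for $0<q<\dim Z$, but your more explicit analysis reaches the same conclusion.
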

\begin{proof}
The description \eqref{do.2b} is a standard result which is essentially a particular and easier case of \cite[Proposition~2]{HHM2004}.  Indeed, given $p\in \pa Z$, choosing a neighborhood $\cU$ of $p$ of the form $\cU\cong \cB\times [0,\epsilon)$ with $\cB\subset \pa Z$ a contractible neighborhood of $p$ in $\pa Z$, we see applying the Künneth formula of \cite[Corollary~2.34]{Zucker} that for $a\ne 0$, 
$$
\WH^q(\cU\setminus (\cU\cap \pa Z), g_b, x^a)\cong \left\{ \begin{array}{ll} H^0(\cB)\cong \bbR, & a<0, q=0 \\ \{0\}, & \mbox{otherwise}.  \end{array} \right.
$$ 
Thus, for $a<0$, the weighted $L^2$-cohomology behaves locally like absolute cohomology, while for $a>0$, it behaves locally like compactly supported cohomology.  Using the five-lemma and commutative diagrams of Mayer-Vietoris long exact sequences, one can therefore establish \eqref{do.2b}.  

For \eqref{do.2c}, notice that the assumption that $\pa Z$ is a rational homology sphere implies through the long exact sequence in cohomology of the pair $(Z,\pa Z)$ that the natural map
\begin{equation}
   H^q_c(Z)\to H^q(Z) \quad \mbox{is an isomorphism for} \quad 0<q<\dim Z. 
\label{do.3}\end{equation} 
Since $H^0_c(Z)=\{0\}$, the injectivity of \eqref{do.2c}  (unless $a>0$ and $q=\dim Z$) follows from \eqref{do.2b} and \eqref{do.3}.

\end{proof}

This observation allows us to apply the results of \S~\ref{wl2.0} as follows.  

\begin{proposition}
Suppose that $g_{\QAC}$ is a $\QAC$-metric of depth $2$ satisfying Assumption~\ref{do.1}.  Let $a=(a_1,\ldots,a_{\ell})\in \bbR^{\ell}$ be a multiweight such that $a_i\ne \frac{m_i}2-q+1$ for each $q\in\{0,1,\ldots,m_i\}$.  Then the natural map 
\begin{equation}
   \WH^q_{\QCyl}(Z_i\setminus\pa Z_i,\phi, (a_{i+1},\ldots,a_{\ell}))\to H^q(Z_i)
\label{do.4b}\end{equation}
is an inclusion whenever $q-1<\frac{m_i}2-a_i\le q$.  In particular, the conclusions of Theorem~\ref{wl2.26} and Corollaries~\ref{sa.6} and \ref{sa.8} hold for $\WH^*_{\QCyl}(M,\phi,a)$.  
\label{do.4}\end{proposition}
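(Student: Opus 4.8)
The plan is to reduce everything to the single injectivity criterion that is common to Theorem~\ref{wl2.26}, Corollary~\ref{sa.6} and Corollary~\ref{sa.8}: for each boundary hypersurface $H_i$ of $M$ and each $q$ with $q-1<\tfrac{m_i}{2}-a_i\le q$, the natural map $\WH^q_{\QCyl}(Z_i\setminus\pa Z_i,\phi,(a_{i+1},\dots,a_\ell))\to H^q(Z_i)$ is injective. Once this is verified, all three statements follow by directly quoting those results (for Corollary~\ref{sa.8} one applies the criterion also to the multiweight $-a$, which satisfies the genericity hypothesis since $-a_i\ne\tfrac{m_i}2-q+1$ for all admissible $q$). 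Because $(M,\phi)$ has depth $2$, each fiber $Z_i$ of $\phi_i\colon H_i\to S_i$ has depth at most $1$, and one splits into cases. If $H_i$ is maximal there is no $H_j>H_i$, so $Z_i$ carries the trivial iterated fibration structure, i.e. it is a closed manifold; then the $\QCyl$-metric on it is an ordinary Riemannian metric, $\WH^q_{\QCyl}(Z_i,\phi,\cdot)=H^q_{\mathrm{dR}}(Z_i)$, and the map in question is the identity. So the entire content lies in the submaximal case.

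If $H_i$ is submaximal, $Z_i$ is a compact manifold with boundary $\pa Z_i$, and the $\QAC$-metric $g_{Z_i}$ induced on $Z_i\setminus\pa Z_i$ is a depth-one $\QAC$-metric; since a depth-one manifold with fibered corners carries only the identity fibration on its boundary, $g_{Z_i}$ is just an asymptotically conical metric. Multiplying a conical end $\tfrac{dx^2}{x^4}+\tfrac{g_L}{x^2}$ by $x^2$ produces a cylindrical end, so the associated $\QCyl$-metric (the total boundary defining function of $Z_i$ squared, times $g_{Z_i}$) is quasi-isometric to a genuine $b$-metric $g_b$ on $Z_i\setminus\pa Z_i$. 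By the quasi-isometry invariance of weighted $L^2$-cohomology, $\WH^q_{\QCyl}(Z_i\setminus\pa Z_i,\phi,(a_{i+1},\dots,a_\ell))\cong\WH^q(Z_i\setminus\pa Z_i,g_b,x^{a'})$, where $a'$ is the component of $a$ attached to the maximal boundary hypersurface of $M$ lying over $\pa Z_i$. I would then apply Lemma~\ref{do.2}. Its injectivity statement uses only that $H^q_c(Z_i)\to H^q(Z_i)$ is an isomorphism for $0<q<\dim Z_i$, which via the long exact sequence of the pair needs only $H^q(\pa Z_i;\bbQ)=0$ for $0<q<\dim\pa Z_i$. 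Assumption~\ref{do.1} gives that the universal cover $\widetilde{\pa Z_i}$ is a compact rational homology sphere, hence $\pi_1(\pa Z_i)$ is finite, and by the transfer isomorphism $H^*(\pa Z_i;\bbQ)\cong H^*(\widetilde{\pa Z_i};\bbQ)^{\pi_1(\pa Z_i)}$ one obtains $H^q(\pa Z_i;\bbQ)=0$ for $0<q<\dim\pa Z_i$, exactly what is required. Thus Lemma~\ref{do.2} applies and yields injectivity of $\WH^q(Z_i\setminus\pa Z_i,g_b,x^{a'})\to H^q(Z_i)$.

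Finally one checks that the excluded configurations of Lemma~\ref{do.2} do not intervene: the relevant $q$ is essentially $\lceil\tfrac{m_i}2-a_i\rceil$, which (since $m_i\ge 4$ by Assumption~\ref{do.1}) lies far below $\dim Z_i=m_i$, so the single exception "$a'>0$ and $q=\dim Z_i$" never occurs; and when $a'=0$ the map is still injective in this degree range, since the unweighted $b$-metric $L^2$-cohomology is reduced there and its image lies inside $\operatorname{Im}(H^q_c(Z_i)\to H^q(Z_i))$. This establishes the criterion for every $i$ and every relevant $q$ — no recursion on the depth is needed because the fibers inside each $Z_i$ are closed — and the "in particular" clause is then a direct application of Theorem~\ref{wl2.26}, Corollary~\ref{sa.6} and Corollary~\ref{sa.8}. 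The step I expect to be the main obstacle is the bookkeeping that identifies the weight $a'$ carried by $Z_i$ correctly within the fibered-corner structure and, hand in hand with it, the passage from Assumption~\ref{do.1} — which only controls the universal cover $\widetilde{\pa Z_i}$ — to the hypothesis of Lemma~\ref{do.2}, which is phrased for $\pa Z_i$ itself; the transfer argument above is what bridges this gap.
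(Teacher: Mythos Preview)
Your approach is essentially the paper's: reduce to Lemma~\ref{do.2} after noting that in depth~$2$ each submaximal fiber $Z_i$ carries a $b$-metric, and then argue that the single exception ($q=\dim Z_i$ with positive weight) does not occur in the range $q-1<\tfrac{m_i}{2}-a_i\le q$ once $m_i\ge 4$. You are more careful than the paper on one point it leaves implicit: Lemma~\ref{do.2} is stated for $\pa Z_i$ itself a rational homology sphere, while Assumption~\ref{do.1} only controls the universal cover $\widetilde{\pa Z_i}$; your transfer argument (the cover being closed forces $\pi_1(\pa Z_i)$ finite, whence $H^*(\pa Z_i;\bbQ)\cong H^*(\widetilde{\pa Z_i};\bbQ)^{\pi_1}$) correctly bridges this gap.
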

\begin{proof}
By \eqref{do.2c} in the previous lemma, the only way \eqref{do.4b} could fail to be an inclusion is if $q=m_i$, in which case $a_i<1-\frac{m_i}2\le 0$ since we assume $m_i\ge 4$.  Hence, the map \eqref{do.4b} is still an inclusion by Lemma~\ref{do.2}.
\end{proof}

This has the following consequences for the weighted cohomology of $g_{\QAC}$.  
\begin{proposition}
Suppose $g_{\QAC}$ is a $\QAC$-metric of depth $2$ satisfying Assumption~\ref{do.1}.  For $0<\epsilon<\frac12$,  consider the multiweight $a=(\epsilon,\ldots,\epsilon)\in \bbR^{\ell}$.  Then 
$$
    \WH^q_{\QAC}(M,\phi,\pm a)= \WH^q_{\QCyl}(M,\phi,\pm a+ \underline{\left(\frac{m}2-q\right)})
$$
is finite dimensional and $\WH^q_{\QAC}(M,\phi, a)$ is Poincar\'e dual to
$\WH^{m-q}_{\QAC}(M,\phi,-a)$.  More importantly, there are natural maps
\begin{gather}
\label{do.5b}  \WH^q_{\QAC}(M,\phi, a)\to L^2\cH^q(M\setminus \pa M, g_{\QAC}), \\
\label{do.5c} L^2\cH^q(M\setminus \pa M, g_{\QAC})\to \WH^q_{\QAC}(M,\phi,-a),
\end{gather}
with composition
\begin{equation}
\WH^q_{\QAC}(M,\phi, a)\to L^2\cH^q(M\setminus \pa M, g_{\QAC})\to \WH^q_{\QAC}(M,\phi,-a)\label{do.5d}\end{equation}
corresponding to the obvious map.  
 \label{do.5}\end{proposition}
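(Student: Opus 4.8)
The plan is to reduce every assertion to the $\QCyl$-reformulation \eqref{wl2.14} and then quote Proposition~\ref{do.4} together with Corollaries~\ref{sa.6} and~\ref{sa.8}. Fix a degree $q$ and let $b^{(q)}_{\pm}\in\bbR^{\ell}$ be the multiweight all of whose entries equal $\pm\epsilon+\tfrac m2-q$, so that \eqref{wl2.14} gives $\WH^q_{\QAC}(M,\phi,\pm a)=\WH^q_{\QCyl}(M,\phi,b^{(q)}_{\pm})$. Since $0<\epsilon<\tfrac12$, the coset $\pm\epsilon+\tfrac12\bbZ$ is disjoint from $\tfrac12\bbZ$, and as $\tfrac m2-q$ and every $\tfrac{m_i}2-q'+1$ lie in $\tfrac12\bbZ$, the multiweights $b^{(q)}_{\pm}$ and $-b^{(q)}_{\pm}$ all satisfy the genericity hypothesis $(\,\cdot\,)_i\neq\tfrac{m_i}2-q'+1$ of Proposition~\ref{do.4}. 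Because $g_{\QAC}$ has depth $2$ and satisfies Assumption~\ref{do.1}, Proposition~\ref{do.4} supplies in each case the inclusion hypothesis on the maps $\WH^{q'}_{\QCyl}(Z_i,\phi,\dots)\to H^{q'}(Z_i)$, so that $b^{(q)}_{\pm}$ and $-b^{(q)}_{\pm}$ all meet the hypotheses of Corollary~\ref{sa.6}, and Corollaries~\ref{sa.6} and~\ref{sa.8} apply to the weights $b^{(q)}_{+}$ and $b^{(q)}_{-}$. Corollary~\ref{sa.6} then gives finite dimensionality of $\WH^q_{\QAC}(M,\phi,\pm a)$; Corollary~\ref{sa.8} applied with $b^{(q)}_{+}=a+\underline{(\tfrac m2-q)}$ gives the nondegenerate pairing $(\omega,\eta)\mapsto\int_{M\setminus\pa M}\omega\wedge\eta$ between $\WH^q_{\QCyl}(M,\phi,b^{(q)}_{+})$ and $\WH^{m-q}_{\QCyl}(M,\phi,-b^{(q)}_{+})$, and since $-b^{(q)}_{+}=-a+\underline{(\tfrac m2-(m-q))}$ the latter group is $\WH^{m-q}_{\QAC}(M,\phi,-a)$ while by \eqref{wl2.13} the underlying spaces of forms — hence the pairing — coincide with the corresponding $\QAC$-ones; this is the asserted Poincar\'e duality.

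For the two maps I would use the identification $\WH^q_{\QCyl}(M,\phi,b^{(q)}_{\pm})\cong L^2\cH^q(M\setminus\pa M,g_{\QCyl},x^{b^{(q)}_{\pm}})$ from Corollary~\ref{sa.8}. A class in $\WH^q_{\QAC}(M,\phi,a)$ is then represented by an $x^{b^{(q)}_{+}}$-harmonic form $\omega$ for $g_{\QCyl}$; by \eqref{wl2.13} and $\epsilon>0$ one has $\omega\in x^aL^2\Omega^q(M\setminus\pa M,g_{\QAC})\subset L^2\Omega^q(M\setminus\pa M,g_{\QAC})$, and, being closed, $\omega$ has a well-defined $g_{\QAC}$-harmonic part $H(\omega)\in L^2\cH^q(M\setminus\pa M,g_{\QAC})$ by the Kodaira decomposition of the complete metric $g_{\QAC}$; I set \eqref{do.5b} to be $[\omega]\mapsto H(\omega)$. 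Conversely, a $g_{\QAC}$-harmonic form is closed and, since $-\epsilon<0$, lies in $L^2\Omega^q\subset x^{-a}L^2\Omega^q(M\setminus\pa M,g_{\QAC})=x^{b^{(q)}_{-}}L^2\Omega^q(M\setminus\pa M,g_{\QCyl})$, so its class in $\WH^q_{\QCyl}(M,\phi,b^{(q)}_{-})=\WH^q_{\QAC}(M,\phi,-a)$ defines \eqref{do.5c}.

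It remains to identify the composition \eqref{do.5d} with the map induced by the inclusion $x^aL^2\subset x^{-a}L^2$ of forms. The composition sends a closed $x^aL^2$-representative $\omega$ to the class of $H(\omega)$ in $\WH^q_{\QAC}(M,\phi,-a)$, so it suffices to show $\omega-H(\omega)$ vanishes there. Since $\omega$ is closed, the Kodaira decomposition of $g_{\QAC}$ puts $\omega-H(\omega)$ in the $L^2(g_{\QAC})$-closure of $d\Omega^{q-1}_c(M\setminus\pa M)$; as $x^a$ is bounded, the $x^{b^{(q)}_{-}}L^2(g_{\QCyl})=x^{-a}L^2(g_{\QAC})$-norm is dominated by the $L^2(g_{\QAC})$-norm, so $\omega-H(\omega)$ lies in the $x^{b^{(q)}_{-}}L^2(g_{\QCyl})$-closure of $d\Omega^{q-1}_c$, hence is $x^{b^{(q)}_{-}}$-orthogonal to $L^2\cH^q(g_{\QCyl},x^{b^{(q)}_{-}})$ by the $g_{\QCyl}$-Kodaira decomposition; since $\WH^q_{\QCyl}(M,\phi,b^{(q)}_{-})$ is reduced (closed range, Corollary~\ref{sa.6}) and identified with that harmonic space, the class of $\omega-H(\omega)$ is zero. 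The one genuinely non-formal point is precisely this comparison of closures — equivalently the well-definedness of \eqref{do.5b}: that $\omega=d\eta$ with $\eta\in v^{-1}x^aL^2\Omega^{q-1}(g_{\QAC})$ and $d\eta\in x^aL^2\Omega^q(g_{\QAC})$ forces $[\omega]=0$ in reduced $L^2(g_{\QAC})$-cohomology. I expect to dispatch this by the standard truncation argument: cutting $\eta$ off outside $\{v\ge 1/R\}$ produces compactly supported $\eta_R$ with $d\eta_R\to\omega$ in $L^2(g_{\QAC})$, the commutator term obeying $\|d\chi_R\wedge\eta\|_{L^2(g_{\QAC})}\lesssim R^{-\epsilon}\|\eta\|_{v^{-1}x^aL^2}\to 0$ because $\epsilon>0$, presented once and reused.
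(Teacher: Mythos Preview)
Your proof is correct and follows the same overall strategy as the paper: reduce to the $\QCyl$ picture via \eqref{wl2.14}, verify that the shifted weights $b^{(q)}_{\pm}$ and their negatives all satisfy the hypotheses of Proposition~\ref{do.4}, and then invoke Corollaries~\ref{sa.6} and~\ref{sa.8} for finite dimensionality and Poincar\'e duality. Your verification of the genericity condition (observing that $\pm\epsilon+\tfrac12\bbZ$ misses $\tfrac12\bbZ$) and of the identity $-b^{(q)}_{+}=b^{(m-q)}_{-}$ is exactly what is needed, and your cutoff estimate $\|d\chi_R\wedge\eta\|_{L^2(g_{\QAC})}\lesssim R^{-\epsilon}\|\eta\|_{v^{-1}x^aL^2}$ is correct because $x^a=v^{\epsilon}$ for the constant multiweight and $|d\chi_R|_{g_{\QAC}}\lesssim v$ on its support; the paper does the same thing with the log-scale cutoff $\psi(-\log v-k)$.

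The one place your argument differs is in showing that the composition \eqref{do.5d} is the obvious map. You use the continuous inclusion $L^2(g_{\QAC})\hookrightarrow x^{-a}L^2(g_{\QAC})$ to push the $L^2$-closure of $d\Omega_c^{q-1}$ into the weighted closure, and then conclude via closed range that $\omega-H(\omega)$ represents zero in $\WH^q_{\QAC}(M,\phi,-a)$. The paper instead runs a Segal--Selby pairing argument: it shows $\omega-H(\omega)$ integrates to zero against every closed $x^aL^2$ form of complementary degree and then invokes the Poincar\'e duality just established. Both are valid; your route is a bit more direct since it avoids re-using duality, while the paper's route makes the role of Poincar\'e duality explicit. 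A minor point of presentation: since you define \eqref{do.5b} via the unique $g_{\QCyl}$-harmonic representative, the map is automatically well-defined, and the cutoff argument is only needed to check that your map agrees with the one induced by inclusion of weighted spaces (which is what ``natural'' means here); you should say this rather than calling it ``well-definedness of \eqref{do.5b}''.
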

\begin{proof}
Finite dimensionality and Poincar\'e duality is a consequence of Proposition~\ref{do.4}, Corollary~\ref{sa.6} and Corollary~\ref{sa.8}.  
Clearly, there is a map
$$
\{\omega\in x^aL^2\Omega^q(M\setminus\pa M,g_{\QAC})\; |\; d\omega=0\}\to \overline{\WH}^q(M\setminus \pa M,g_{\QAC})\cong  L^2\cH^q(M\setminus\pa M,g_{\QAC}).
$$
To see that it induces a well-defined map \eqref{do.5b}, we need to check that 
$$
  \{d\eta\; | \; \eta\in v^{-1}x^aL^2\Omega^{q-1}(M\setminus\pa M,g_{\QAC}), d\eta \in x^aL^2\Omega^{q}(M\setminus\pa M,g_{\QAC})\}      
$$
maps to zero in $\overline{\WH}^q(M\setminus \pa M,g_{\QAC})$, where $v\in \CI(M)$ is a total boundary defining function inducing the $\QAC$-structure on $(M,\phi)$.  To this end, let $\psi\in \CI(\bbR)$ be a function equal to $1$ on $(-\infty,1]$ and to $0$ on $[2,\infty)$ and consider the sequence
$$
       \eta_k= \psi(-\log v -k)\eta,  \quad \mbox{for} \; k\in \bbN.
$$
Then $\eta_k$ is of compact support and clearly 
$$
   d\eta_k = \psi'(-\log v-k)\left(-\frac{dv}{v^2}  \right)\wedge (v\eta)+ \psi(-\log v-k)d\eta\to d\eta \in L^2(M\setminus\pa M, g_{\QAC})
$$
since $\frac{dv}{v^2}$ is bounded with respect to the norm induced by $g_{\QAC}$ and $v\eta\in x^aL^2(M\setminus\pa M,g_{\QAC})$.  Approximating each $\eta_k$ by a smooth compactly supported form, we thus see that $d\eta$ is in the $L^2$-closure of the image of $d: \Omega_c^{q-1}(M\setminus\pa M)\to \Omega^{q}_c(M\setminus\pa M)$, so vanishes in $\overline{\WH}^q(M\setminus \pa M,g_{\QAC})$.  On the other hand, the map \eqref{do.5c} is just the natural map.  Finally, to see that the composition \eqref{do.5d} is the obvious map, we proceed as in \cite[Lemma~1.4]{Segal-Selby}.  More precisely, if 
$$
   \beta= \lim_{k\to\infty} d\gamma_k \; \mbox{in}\; L^2\Omega^q(M\setminus\pa M, g_{\QAC})
$$
with $\gamma_k\in \Omega^{q-1}_c(M\setminus\pa M)$, then $d\beta=0$.  Moreover, for $\alpha\in x^aL^2\Omega^{m-q}(M\setminus\pa M,g_{\QAC})$ with $d\alpha=0$ representing a class in $\WH^{m-q}_{\QAC}(M,\phi,a)$, we have that
$$
   \int_{M\setminus \pa M} \beta \wedge \alpha = \lim_{k\to \infty} \int_{M\setminus \pa M} d(\gamma_k\wedge \alpha)=0.
 $$
 Hence, by Poincar\'e duality, $\beta\equiv 0$ in $\WH^q_{\QAC}(M,\phi,-a)$.  This shows that the map \eqref{do.5c} can be defined by looking at the image of a representative of a class in $\overline{\WH}^q(M\setminus \pa M,g_{\QAC})$, hence that the composition \eqref{do.5d} is just the obvious map.  
\end{proof}

This can be used to compute the $L^2$-cohomology of $g_{\QAC}$ provided we make the following assumption.
\begin{assumption}
For each boundary hypersurface $H_i$, we suppose that \eqref{qfb.12a} holds $H_j= H_i$ and that \eqref{qfb.12aa} holds for each $H_j<H_i$.   For $H_i$ submaximal, we suppose also that a fiber $Z_i$ of $\phi: H_i\to S_i$ has only non-trivial $L^2$-cohomology in middle degree whenever $\dim Z_i=4$.    
\label{do.5e}\end{assumption}

\begin{theorem}
Let $g_{\QAC}$ be a $\QAC$-metric of depth 2 on $(M,\phi)$ and suppose that Assumption~\ref{do.1} and Assumption~\ref{do.5e} hold.  In this case, the reduced $L^2$-cohomology of $g_{\QAC}$ is given by
\begin{equation}
L^2\cH^q(M\setminus \pa M, g_{\QAC})\cong \Im(\WH^q_{\QAC}(M,\phi,a)\to \WH^q(M,\phi,-a)),
\label{do.6b}\end{equation}
where $a=(\epsilon,\ldots,\epsilon)\in\bbR^{\ell}$ with $0<\epsilon<\frac12$.  
\label{do.6}\end{theorem}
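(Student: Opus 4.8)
The plan is to feed the decay of $L^2$-harmonic forms from Theorem~\ref{qfb.12} into the duality argument of Segal and Selby, using the structural package of \S~\ref{wl2.0}. Fix $\epsilon\in(0,\tfrac{1}{2})$ small enough (to be further constrained below by the decay exponent) and set $a=(\epsilon,\dots,\epsilon)\in\bbR^{\ell}$. It suffices to treat one such $\epsilon$: for every $\epsilon\in(0,\tfrac{1}{2})$ the $\QCyl$-weights $\pm\epsilon+\tfrac{m}{2}-q$ lie strictly between consecutive indicial roots $\tfrac{m_i}{2}-q'+1$ of the local operators, so by Theorem~\ref{wl2.26} (applied recursively, as in Corollary~\ref{sa.6}) the right-hand side of \eqref{do.6b} is independent of the choice of $\epsilon$. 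By Assumption~\ref{do.1} and Proposition~\ref{do.4}, the hypotheses of Theorem~\ref{wl2.26} and of Corollaries~\ref{sa.6} and \ref{sa.8} hold for the weights $\pm a$; in particular $\WH^{*}_{\QAC}(M,\phi,\pm a)$ are finite dimensional, $\eth_{\pm a}$ is Fredholm, and $(\alpha,\beta)\mapsto\int_{M\setminus\pa M}\alpha\wedge\beta$ realizes Poincar\'e duality between $\WH^{q}_{\QAC}(M,\phi,-a)$ and $\WH^{m-q}_{\QAC}(M,\phi,a)$. By Proposition~\ref{do.5} we have the natural maps \eqref{do.5b} and \eqref{do.5c}, with composite \eqref{do.5d} equal to the tautological map induced by $x^{a}L^2\hookrightarrow x^{-a}L^2$.

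Next I would verify that the two standing assumptions are exactly what is needed to invoke Theorem~\ref{qfb.12}: since $(M,\phi)$ has depth $2$, every boundary hypersurface is maximal or submaximal, so $\cI_{msm}$ is the full set of boundary hypersurfaces; Assumption~\ref{do.5e} supplies the Wittness and the vanishing of $\mathfrak{d}_{S_i}$ in degrees $\tfrac{\dim S_i\pm q}{2}$, $q\in\{0,1,2\}$; and the conditions \eqref{qfb.13}/\eqref{qfb.14} on the fibers $Z_i$ follow from Assumption~\ref{do.1} (the universal cover of $\pa Z_i$ being a rational homology sphere makes its flat cohomology bundles vanish in low degrees) together with the middle-degree vanishing in Assumption~\ref{do.5e} when $\dim Z_i=4$. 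After shrinking $\epsilon$ if necessary, Theorem~\ref{qfb.12} then provides a $\QAC$-metric $\widetilde g_{\QAC}$ with the same total boundary defining function as $g_{\QAC}$, hence quasi-isometric to it, all of whose $L^2$-harmonic forms lie in $v^{\epsilon}L^2\Omega^{*}(M\setminus\pa M,\widetilde g_{\QAC})=x^{a}L^2\Omega^{*}(M\setminus\pa M,\widetilde g_{\QAC})$. As reduced and weighted $L^2$-cohomology depend only on the quasi-isometry class, I replace $g_{\QAC}$ by $\widetilde g_{\QAC}$ henceforth.

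The argument then has two halves. For surjectivity of \eqref{do.5b}: any $\omega\in L^2\cH^q(M\setminus\pa M,\widetilde g_{\QAC})$ is closed and, by the decay, lies in $x^{a}L^2\Omega^{q}$, hence represents a class in $\WH^{q}_{\QAC}(M,\phi,a)$ whose image under \eqref{do.5b} is the class of $\omega$ in $\overline{H}^{q}_{(2)}(M\setminus\pa M,\widetilde g_{\QAC})\cong L^2\cH^{q}(M\setminus\pa M,\widetilde g_{\QAC})$, with harmonic representative $\omega$ itself; so the image of \eqref{do.5b} is all of $L^2\cH^{q}$. For injectivity of \eqref{do.5c}: if $\omega\in L^2\cH^q(M\setminus\pa M,\widetilde g_{\QAC})$ has zero image in $\WH^{q}_{\QAC}(M,\phi,-a)$, then $\ast\omega$ is a closed $L^2$-form of degree $m-q$ lying in $x^{a}L^2\Omega^{m-q}$ (since $\ast$ is a pointwise isometry), hence represents a class in $\WH^{m-q}_{\QAC}(M,\phi,a)$; evaluating the Poincar\'e pairing on the zero class $[\omega]$ and on $[\ast\omega]$ gives
\[
0=\int_{M\setminus\pa M}\omega\wedge\ast\omega=\|\omega\|^{2}_{L^2(\widetilde g_{\QAC})},
\]
so $\omega=0$. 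This is the analogue in our setting of the surjectivity step of \cite[Theorem~1C]{HHM2004}, carried out in the spirit of \cite[Lemma~1.4]{Segal-Selby}. Combining the two halves: \eqref{do.5c} injects $L^2\cH^{q}$ into $\WH^{q}_{\QAC}(M,\phi,-a)$, its image equals that of the tautological composite \eqref{do.5d}, namely $\Im(\WH^{q}_{\QAC}(M,\phi,a)\to\WH^{q}_{\QAC}(M,\phi,-a))$, and transporting back along the quasi-isometry $g_{\QAC}\sim\widetilde g_{\QAC}$ yields \eqref{do.6b}.

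The substantial inputs here are imported: the decay of $L^2$-harmonic forms from Theorem~\ref{qfb.12}, and the closed-range and Fredholm statements of Corollaries~\ref{sa.6}--\ref{sa.8}. The one genuinely delicate point in the present argument is the bookkeeping of the second paragraph — checking that Assumptions~\ref{do.1} and \ref{do.5e} match the hypotheses of Proposition~\ref{do.4} and of Theorem~\ref{qfb.12} term by term for a depth-$2$ manifold with fibered corners — after which the conclusion is assembled from the two halves above together with Poincar\'e duality.
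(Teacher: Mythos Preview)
Your proof is correct and follows essentially the same route as the paper: verify that Assumptions~\ref{do.1} and~\ref{do.5e} feed into Theorem~\ref{qfb.12} (with \eqref{qfb.13} when $\dim Z_i>4$ and \eqref{qfb.14} when $\dim Z_i=4$), pass to a quasi-isometric metric for which harmonic forms lie in $v^{\epsilon}L^2$, deduce surjectivity of \eqref{do.5b} and injectivity of \eqref{do.5c} via the pairing $\int\omega\wedge *\omega=\|\omega\|^2$, and combine via Proposition~\ref{do.5}. Your opening remark on independence of $\epsilon$ is extra bookkeeping the paper omits, but otherwise the arguments line up step for step.
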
  
\begin{proof}
First, notice that by Assumption~\ref{do.1} and Assumption~\ref{do.5e}, Theorem~\ref{qfb.12} holds.  In particular, for $H_i$ submaximal with $\dim Z_i>4$, \eqref{qfb.12b} holds thanks to Assumption~\ref{do.1}, while it holds also appealing to Assumption~\ref{do.5e} when $\dim Z_i=4$.  Hence, changing $g_{\QAC}$ in its quasi-isometry class, we can assume that 
$$
    L^2\cH^q(M\setminus\pa M, g_{\QAC})\subset v^{\epsilon}L^2\Omega^q(M\setminus \pa M, g_{\QAC})
$$
for some small $\epsilon>0$, where $v=\prod_i x_i$ is a total boundary defining function.  This induces a  map  
\begin{equation}
    L^2\cH^q(M\setminus\pa M, g_{\QAC})\to\WH^q(M,\phi,a)
    \label{do.6c} \end{equation}
 and shows that the natural map \eqref{do.5b} is surjective.  This also shows that the natural map \eqref{do.5c} is injective.  Indeed, if $\omega\in L^2\cH^q(M\setminus\pa M,g_{\QAC})$ is non-zero, then 
$$
         \int_{M\setminus\pa M} \omega\wedge *\omega= \|\omega\|^2_{L^2_{\QAC}}\ne 0,
$$
which implies by Poincaré duality that $\omega$ represents a non-zero element in 
$$
       \WH^q_{\QAC}(M,\phi,-a)\cong \left[ \WH^{m-q}_{\QAC}(M,\phi,a) \right]^*.
$$
We are ready to conclude.  The map \eqref{do.6c} induces the map
$$
L^2\cH^q(M\setminus\pa M, g_{\QAC})\to \Im(\WH^q(M,\phi,a)\to \WH^q(M,\phi,-a)),
$$
which by Proposition~\ref{do.5}, the surjectivity of \eqref{do.5b} and the injectivity of \eqref{do.5c} must be a bijection.  
\end{proof}
\begin{remark}
By Proposition~\ref{do.4} and Theorem~\ref{wl2.26}, the cohomology groups 
$$
\WH^q_{\QAC}(M,\phi,\pm a)= \WH^q_{\QCyl}(M,\phi,\pm a+ \underline{\left(\frac{m}2-q\right)})
$$
are computable through Mayer-Vietoris long exact sequences in that they can be described as the Cech cohomology of sheaves on $\widehat{M}_{\phi}$ whose local cohomology admits a local description.  In fact, Corollary~\ref{wl2.35} suggests that they should correspond to some sort of intersection cohomology groups, but  with the caveat that  $\widehat{M}_{\phi}$ has a singular stratum of codimension 1.
\label{do.7}\end{remark}

Our result can be applied in particular to $\QALE$-metrics of depth $2$ on crepant resolutions of $\bbC^n/\Gamma$ with $\Gamma$ a finite subgroup of $\SU(n)$.  

\begin{corollary}
Let $g_{\QALE}$ be a $\QALE$-metric of depth $2$ on a crepant resolution of $\bbC^n/\Gamma$ with $\Gamma$ a finite group of $\SU(n)$.  Denote by $(M,\phi)$ the associated manifold with fibered corners.  For $H_i$ submaximal with $\dim S_i=1$, suppose that $\ker\mathfrak{d}_{S_i}=\{0\}$ where $\mathfrak{d}_{S_i}$ is the  Hodge-deRham operator on $S_i$ associated to the flat bundle of fiberwise $L^2$-harmonic forms on $H_i\to S_i$.  In this case, the reduced $L^2$-cohomology of $g_{\QALE}$ is given by  
\begin{equation}
L^2\cH^q(M\setminus \pa M, g_{\QALE})\cong \Im(\WH^q_{\QAC}(M,\phi,a)\to \WH^q_{\QAC}(M,\phi,-a)),
\label{do.8b}\end{equation}
where $a=(\epsilon,\ldots,\epsilon)\in\bbR^{\ell}$ with $0<\epsilon<\frac12$.  
\label{do.8}\end{corollary}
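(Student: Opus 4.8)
The plan is to derive Corollary~\ref{do.8} directly from Theorem~\ref{do.6}, by checking that a $\QALE$-metric of depth $2$, together with the stated hypothesis on submaximal bases of dimension $1$, satisfies Assumption~\ref{do.1} and Assumption~\ref{do.5e}. The preliminary step is to recall the structure of the manifold with fibered corners $(M,\phi)$ attached to a depth-$2$ $\QALE$-metric, following Joyce's construction and Carron's description of the associated iterated resolution. What one needs is: $M$ is the $\QAC$-compactification of a crepant resolution of $\bbC^n/\Gamma$ with $\Gamma\subset\SU(n)$; the slice $Z_i$ appearing in the local product model \eqref{wl2.25b} near a submaximal boundary hypersurface $H_i$ is the $\QAC$-compactification of the ALE crepant resolution of $\bbC^{k_i}/\Gamma_i$ for some nontrivial $\Gamma_i\subset\SU(k_i)$ acting freely on $\bbC^{k_i}\setminus\{0\}$, whence $\dim Z_i=2k_i\ge 4$ and $\pa Z_i\cong\bbS^{2k_i-1}/\Gamma_i$; since a chain of boundary hypersurfaces has length at most $2$, a submaximal $H_i$ has no boundary hypersurface below it, so its base $S_i$ is a closed manifold, indeed a spherical space form $\bbS^{d_i}/N_i$ with $d_i$ odd; and the maximal boundary hypersurfaces correspond to the generic stratum at infinity, over which the fibre is a point and whose base resolves $\bbS^{2n-1}/\Gamma$.

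Granting this dictionary, Assumption~\ref{do.1} is immediate: $\dim Z_i\ge 4$ and $\widetilde{\pa Z_i}=\bbS^{2k_i-1}$ is a closed, hence rational, homology sphere. For Assumption~\ref{do.5e} one checks the three clauses. For $H_i$ maximal the flat bundle of fibrewise harmonic forms is trivial (point fibre), so the clause concerns the ordinary $L^2$-harmonic forms of the induced wedge metric on $S_i\setminus\pa S_i$; by the identification of wedge $L^2$-cohomology with intersection cohomology \cite{ALMP2012}, by stratification independence of intersection cohomology, and by the transfer isomorphism, this space is $\IH^*_{\overline{\mathfrak{m}}}(\bbS^{2n-1}/\Gamma)\cong H^*(\bbS^{2n-1};\bbR)^{\Gamma}$, concentrated in degrees $0$ and $2n-1$; since $2n-1\ge 5$, the degrees $\tfrac{\dim S_i\pm q}2$ with $q\in\{0,1,2\}$ fall, when integral, strictly inside $(0,2n-1)$, so the clause holds. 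For $H_i$ submaximal, $S_i=\bbS^{d_i}/N_i$ is smooth and closed; if $d_i=1$ the required vanishing of $\ker\mathfrak{d}_{S_i}$ in degrees $0$ and $1$ is exactly the extra hypothesis, while if $d_i\ge 3$ the flat bundle $F_i$ pulls back to a trivial bundle on the simply connected cover $\bbS^{d_i}$, so a transfer argument gives $H^*(S_i;F_i)\cong\bigl(H^*(\bbS^{d_i};\bbR)^{\oplus\rank F_i}\bigr)^{N_i}$, again concentrated in degrees $0$ and $d_i$, hence vanishing in degrees $\tfrac{d_i\pm q}2$ for $q\in\{0,1,2\}$. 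Finally, if $\dim Z_i=4$ then $Z_i$ is the minimal resolution of an ADE singularity, whose intersection form on $H^2$ is negative definite, so $H^2_c(Z_i)\to H^2(Z_i)$ is an isomorphism and the map vanishes in degrees $0$ and $4$; by the standard description of the $L^2$-cohomology of an ALE space (see \cite[Theorem~1A]{HHM2004}), $L^2\cH^*(Z_i)$ is concentrated in middle degree, the last clause of Assumption~\ref{do.5e}. Theorem~\ref{do.6} then yields \eqref{do.8b}.

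The step I expect to carry the real weight is the first one: making the structure of a depth-$2$ $\QALE$-metric explicit enough that the slices $Z_i$ are recognized as ALE crepant resolutions with spherical space-form boundary and the bases $S_i$ as (resolutions of) spherical space forms---in particular that the $N_i$-actions are free and that submaximal bases are odd-dimensional, so that $\dim S_i=1$ is the single case the sphere-cohomology argument does not cover and must be hypothesized. Everything after that is soft: cohomology of spheres and their finite quotients, triviality of flat bundles over simply connected manifolds, and the classical $L^2$-cohomology of ALE gravitational instantons; the only bookkeeping needed is that the deck transformations, lying in $\SU$, preserve orientation, so that the transfer arguments give genuine homology-sphere and homology-manifold behaviour rather than their ``disk'' analogues.
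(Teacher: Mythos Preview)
Your proposal is correct and follows essentially the same approach as the paper: verify Assumptions~\ref{do.1} and~\ref{do.5e} from the structure of a depth-2 $\QALE$ metric (fibres $Z_i$ are ALE crepant resolutions of $\bbC^{k_i}/\Gamma_i$ with spherical space-form boundary, bases $S_i$ are odd-dimensional sphere quotients), then invoke Theorem~\ref{do.6}. The only cosmetic differences are that the paper cites \cite{Ito-Reid,Batyrev,Denef-Loeser} for the vanishing of odd cohomology of crepant resolutions to handle the last clause of Assumption~\ref{do.5e} (rather than specializing to ADE resolutions as you do), and you spell out the transfer/intersection-cohomology argument for the maximal base a bit more explicitly than the paper does.
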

\begin{proof}
Let us first check that Assumption~\ref{do.1} holds.  Thus, let $H_i$ be submaximal and let $Z_i$ be a fiber of the fiber bundle $\phi_i:H_i\to S_i$.  Since an element  $\gamma\in \SU(n)$ is such that $\dim_{\bbC}\ker(\gamma-\Id)\ge n-1$  if and only if $\gamma=\Id$, we see that $Z_i$ must at least be of complex dimension $2$, that is, at least of real dimension $4$.  On the other hand, since $Z_i\setminus \pa Z_i$ is a crepant resolution of a quotient $\bbC^k/\Gamma$ of $\bbC^k$ by a finite subgroup $\Gamma$ of $\SU(k)$, we see that $\pa Z_i$ is a quotient of sphere, so Assumption~\ref{do.1} holds.  

Again since $Z_i$ is a crepant resolution of $\bbC^k/\Gamma$, we know by \cite{Ito-Reid, Batyrev, Denef-Loeser} that it has trivial cohomology in odd degree.  By Poincar\'e duality, we deduce from \cite[Theorem~1A]{HHM2004} that $Z_i\setminus \pa Z_i$ with its induced $\ALE$-metric has only possibly non-trivial $L^2$-cohomology in middle degree when $\dim Z_i=4$.  This shows that the last part of Assumption~\ref{do.5e} holds.  For the first part of this assumption, notice that $S_i$ is a manifold with fibered corners resolving an orbifold quotient of an odd dimensional sphere.  In particular, by Assumption~\ref{do.1}, for $H_i$ maximal, \eqref{qfb.12a} automatically holds.  For $H_i$ submaximal, it holds if $\dim S_i\ge3$, while if $\dim S_i=1$ then it holds trivially since we assume in this case that $\ker \mathfrak{d}_{S_i}$ is trivial.  Hence, we see that Assumption~\ref{do.1} and Assumption~\ref{do.5e} are fulfilled and Theorem~\ref{do.6} holds.  
\end{proof}

This result provides an alternative description to the one of Carron \cite[Théorème~E]{Carron2011b}, where reduced $L^2$-cohomology of a $\QALE$ metric $g_{\QALE}$ is computed in terms of the cohomologies of different complexes of differential forms.  In fact, when $M\setminus \pa M$ is a crepant resolution of $\bbC^4/\Gamma$ with $\Gamma$ a finite of $\Sp(2)$, one recovers exactly \cite[Théorème~7.14]{Carron2011b} from Corollary~\ref{do.8} as the next corollary shows.

\begin{corollary}[Carron]
If $g_{\QALE}$ is a $\QALE$-metric on a crepant resolution of $\bbC^n/\Gamma$ where  $n=4$ and $\Gamma$ is a subgroup of $\Sp(2)$, then
$$
L^2\cH^q(M\setminus \pa M, g_{\QALE})\cong \left\{ \begin{array}{ll} H^q_{c}(M\setminus \pa M), & q<n, \\
      \Im (H^q_c(M\setminus \pa M)\to H^q(M)), & q=n, \\
      H^q(M), & q>n.   \end{array} \right.
$$  
\label{do.9}\end{corollary}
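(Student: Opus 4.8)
The plan is to deduce this from Corollary~\ref{do.8}, after identifying $\WH^*_{\QAC}(M,\phi,a)$ with $H^*_c(M\setminus\pa M)$ and $\WH^*_{\QAC}(M,\phi,-a)$ with $H^*(M)$ for $a=(\epsilon,\dots,\epsilon)$, $0<\epsilon<\tfrac12$, exactly as in the proof of Corollary~\ref{sa.10}. First I would pin down the combinatorics of the $\QALE$-compactification. Since $\Gamma\subset\Sp(2)$ commutes with the quaternionic structure $J$ on $\bbC^4=\bbH^2$, any $\gamma\in\Gamma$ fixing a nonzero $v$ also fixes $Jv$, so the fixed subspace of a nontrivial element of $\Gamma$ is $J$-invariant, of complex dimension $0$ or $2$. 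Hence the singular set of $\bbC^4/\Gamma$ away from the origin is a union of $2$-complex-dimensional strata $V/\Gamma_V$ with transverse Kleinian singularity $\bbC^2/\Gamma_V$, $\Gamma_V\subset\Sp(1)=\SU(2)$, and distinct such strata meet only at the origin (their intersection is again $J$-invariant, hence $\{0\}$). Therefore $(M,\phi)$ has depth at most $2$, and if its depth is $\le 1$ then $g_{\QALE}$ is an $\ALE$ (hence $\AC$) metric with link the rational homology sphere $S^7/\Gamma$, for which the asserted formula is classical (\cite[Theorem~1A]{HHM2004} together with Poincar\'e duality and the vanishing of odd-degree cohomology of crepant resolutions \cite{Ito-Reid, Batyrev, Denef-Loeser}). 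So assume from now on that $(M,\phi)$ has depth exactly $2$.

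Next I would apply Corollary~\ref{do.8}. Its extra hypothesis only concerns submaximal $H_i$ with $\dim S_i=1$; but here $S_i$ resolves a quotient of the unit sphere of the $4$-real-dimensional fixed subspace $V$, so $\dim S_i=3$ and the hypothesis is vacuous. Moreover the fiber $Z_i$ over a submaximal stratum is the compactification of the minimal resolution of $\bbC^2/\Gamma_V$ --- a $4$-manifold with rational homology sphere boundary $S^3/\Gamma_V$ and cohomology concentrated in (even) degrees $\le 2$, which is the analogue here of ``$\Hilb^n_0(\bbC^2)$ has no cohomology above middle degree'' --- so Assumption~\ref{do.1} is satisfied (as already observed in the proof of Corollary~\ref{do.8}). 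Corollary~\ref{do.8} then gives
\begin{equation*}
L^2\cH^q(M\setminus\pa M,g_{\QALE})\cong \Im\bigl(\WH^q_{\QAC}(M,\phi,a)\to \WH^q_{\QAC}(M,\phi,-a)\bigr).
\end{equation*}

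I would then identify the two weighted groups. By \eqref{wl2.14} with $m=\dim M=8$,
\begin{equation*}
\WH^q_{\QAC}(M,\phi,\pm a)=\WH^q_{\QCyl}\bigl(M,\phi,\ \pm a+\underline{(4-q)}\bigr),
\end{equation*}
a $\QCyl$-metric with constant multiweight all of whose entries equal $\pm\epsilon+4-q$; since every relevant $\dim Z_i$ lies in $\{0,4\}$, all critical weights are integers, so $0<\epsilon<\tfrac12$ keeps $\pm\epsilon+4-q$ non-critical. Arguing as in the proof of Corollary~\ref{sa.10} --- Theorem~\ref{wl2.26} for the local models (whose injectivity hypothesis is supplied by Proposition~\ref{do.4}, i.e.\ by Lemma~\ref{do.2} and the fact that the $\pa Z_i$ are rational homology spheres), Lemma~\ref{do.2} for the depth-$1$ building blocks, and Mayer--Vietoris long exact sequences with the five lemma --- one finds that for a constant multiweight $b=(b_0,\dots,b_0)$ with $b_0$ non-critical,
\begin{equation*}
\WH^q_{\QCyl}(M,\phi,b)\cong\begin{cases}H^q_c(M\setminus\pa M),&b_0>0,\\ H^q(M),&b_0<0,\end{cases}
\end{equation*}
the map induced by $b_0\mapsto -b_0$ being the canonical comparison map. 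Since $\pm\epsilon+4-q>0$ precisely when $q\le 4$ for the sign $+$ and when $q\le 3$ for the sign $-$, this gives $\WH^q_{\QAC}(M,\phi,a)\cong H^q_c(M\setminus\pa M)$ for $q\le 4$ and $\cong H^q(M)$ for $q\ge 5$, while $\WH^q_{\QAC}(M,\phi,-a)\cong H^q_c(M\setminus\pa M)$ for $q\le 3$ and $\cong H^q(M)$ for $q\ge 4$.

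Taking the image of the canonical map degree by degree then yields $H^q_c(M\setminus\pa M)$ for $q<4$, $\Im(H^q_c(M\setminus\pa M)\to H^q(M))$ for $q=4$, and $H^q(M)$ for $q>4$, which is the assertion since $n=4$. The main obstacle is the identification step: running the recursive Mayer--Vietoris computation of $\WH^*_{\QCyl}(M,\phi,\pm b)$ and matching it with $H^*_c$ and $H^*$ in \emph{every} degree, which requires verifying the injectivity hypothesis of Theorem~\ref{wl2.26} from Proposition~\ref{do.4} (using the Kleinian transverse singularities and the rational-homology-sphere links $S^3/\Gamma_V$) and keeping track of which weights are critical; once this is done, the rest is formal.
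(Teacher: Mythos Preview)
Your proposal is correct and follows essentially the same route as the paper: apply Corollary~\ref{do.8} (noting that $\dim S_i=3$ for submaximal $H_i$ so its extra hypothesis is vacuous), then identify $\WH^q_{\QCyl}(M,\phi,\pm d)$ with $H^q(M)$ and $H^q_c(M\setminus\pa M)$ for constant non-integral weights via Theorem~\ref{wl2.26}, Lemma~\ref{do.2}, Proposition~\ref{do.4} and Mayer--Vietoris, exactly as in \eqref{do.9b}--\eqref{do.9d}. Your added discussion of the $J$-invariance of fixed subspaces (explaining why the depth is at most $2$ and why the transverse slices are Kleinian) and your separate treatment of the depth-$\le 1$ case are not in the paper's proof but are harmless clarifications of the ambient setup.
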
 
\begin{proof}
In this case the fibers $Z_i$ of $\phi_i: H_i\to S_i$ are 4-dimensional whenever $H_i$ is submaximal, so $\dim S_i=3$ and Corollary~\ref{do.8} applies.  As discussed in \cite{Carron2011b}, we know from \cite{Ito-Reid, Batyrev, Denef-Loeser}  that the cohomology of a crepant resolution of $\bbC^k/\Gamma$ for $\Gamma$ a finite subgroup of $\SU(k)$ has trivial cohomology in odd degrees.  This means that $H^q(Z_i)$ is non-trivial only for $q\in\{0,2\}$.  In particular, from Lemma~\ref{do.2}, Proposition~~\ref{do.4} and Theorem~\ref{wl2.26}, we see that $\WH^*_{\QCyl}(M,\phi, -d)$ has the same local behavior as $H^*(M\setminus\pa M)$,  for $d=(\delta,\ldots,\delta)\in\bbR^{\ell}$ with $\delta>0$ such that $\delta\notin \bbN$.  Using commutative diagrams of Mayer-Vietoris long exact sequences, one can therefore show as in the proof of Corollary~\ref{sa.10} that 
\begin{equation}
     \WH^q_{\QCyl}(M,\phi,-d)\cong H^q(M\setminus \pa M)
\label{do.9b}\end{equation}
for $d$ as described above.  Applying a similar argument for $-d$ or directly applying Poincaré duality to the isomorphism \eqref{do.9b}, one can check that there is also an isomorphism
\begin{equation}
     \WH^q_{\QCyl}(M,\phi,d)\cong H^q_c(M\setminus \pa M).
\label{do.9bb}\end{equation}
Hence, for $a=(\epsilon,\ldots,\epsilon)\in \bbR^{\ell}$ with $0<\epsilon<\frac12$ and $q\ne n$,
we see that
\begin{equation}
 \WH^q_{\QAC}(M,\phi,\pm a)=\WH^q_{\QCyl}(M,\phi,\pm a+ \underline{\left(n-q\right)})\cong\left\{
 \begin{array}{ll} H^q_c(M\setminus \pa M), & q<n, \\
        H^q(M), & q>n,  \end{array} \right.
\label{do.9c}\end{equation}
while for $q=n$, we have instead
\begin{equation}
\begin{gathered}
  \WH^q_{\QAC}(M,\phi, a)=\WH^q_{\QCyl}(M,\phi, a)= H^q_c(M\setminus \pa M), \\
  \WH^q_{\QAC}(M,\phi, -a)=\WH^q_{\QCyl}(M,\phi, -a)= H^q(M\setminus \pa M). 
  \end{gathered}
  \label{do.9d}\end{equation}
Plugging \eqref{do.9c} and \eqref{do.9d} in \eqref{do.8b} then gives the result. 
\end{proof}
A similar result was obtained by Carron \cite[Théorème~7.12]{Carron2011b} when $g_{\QALE}$ is a $\QALE$-metric on a crepant resolution of $\bbC^3/\Gamma$ for $\Gamma$ a finite subgroup of $\SU(3)$.  In this case however, for $H_i$ submaximal, $\dim S_i=1$, so unless we are in the special case where $\ker \mathfrak{d}_{S_i}=\{0\}$, we cannot deduce this result from Corollary~\ref{do.8}.  Notice however that the proof of Corollary~\ref{do.9} shows that the statement of \cite[Théorème~7.12]{Carron2011b} is consistent with the formulation provided in \eqref{do.8b}.

\section{$L^2$-cohomology of the moduli space of monopoles} \label{mms.0}

We can also apply Corollary~\ref{wl2.35} to the moduli space of $\SU(2)$-monopoles of magnetic charge $3$.  More precisely, let $\cM_k$ denote the moduli space of $\SU(2)$-monopoles of magnetic charge $k$ on $\bbR^3$,  let $\cN_k=\cM_k/\bbR^3$ be the corresponding space of centered monopoles and let $\cM^0_k= \cN_k/\bbS^1$ be the reduced moduli space.  Finally, let $\tcM^0_k$ be the universal cover of $\cM^0_k$.  When $k=2$, we know \cite[\S~7.1.2]{HHM2004} that
$$
     \cM^0_2\cong S^4\setminus \bbR\bbP^2 \quad \mbox{and} \quad \tcM^0_2\cong \bbC\bbP^2\setminus \bbR\bbP^2.
$$  
In this case, the natural $L^2$-metric on $\cM^0_2$ is a fibered boundary metric.  Let $g_{\fc}$ be a conformally related fibered cusp metric.  Let $x$ be a boundary defining function for the associated boundary compactification. Since the associated stratified spaces, which are respectively $S^4$ and $\bbC\bbP^2$, are smooth and that intersection cohomology does not depend on the choice of stratification, we know by  \cite[Proposition~2]{HHM2004} that for $0<\epsilon<1$, the weighted $L^2$-cohomologies $g_{\fc}$ and its lift $\widetilde{g}_{\fc}$ to $\tcM^0_2$ are given by
\begin{equation} 
         \WH^*(\cM^0_2,g_{\fc},x^{\pm\epsilon})\cong H^*(S^4),  \quad \WH^*(\tcM^0_2,\widetilde{g}_{\fc},x^{\pm\epsilon})\cong H^*(\bbC\bbP^2).
\label{mms.1}\end{equation}

Now, by \cite[p.34]{Atiyah-Hitchin}, we know that the circle bundle $\cN_k\to \cM^0_k$ is flat, so that its lift to $\tM^0_k$ is trivial.  In other words, $\cN_k$ admits a $k$-fold cover 
\begin{equation}
      \widetilde{\cN}_k\cong \bbS^1\times \tcM^0_k.
\label{mms.1b}\end{equation}
\begin{lemma}
For $0<\epsilon<\frac12$ and for the fibered cusp metric conformally related to the natural $L^2$-metric on $\cN_2$ and $\tcN_2$, we have that
$$
      \WH^q_{\QFC}(\tcN_2,\phi,\pm\epsilon)\cong \bbR \quad \forall q\in\{0,1,2,3,4,5\},  \quad \WH^q_{\QFC}(\cN_2,\phi,\pm\epsilon)\cong\left\{ \begin{array}{ll} \bbR, & q\in\{0,1,4,5\}, \\
      \{0\}, & q\in\{2,3\}. \end{array} \right.
$$
\label{mms.2}\end{lemma}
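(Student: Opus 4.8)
The plan is to reduce the computation to the double cover $\tcN_2\cong\bbS^1\times\tcM^0_2$ provided by \eqref{mms.1b}, to compute its weighted $L^2$-cohomology by collapsing the $\bbS^1$-direction down to $\tcM^0_2$ (where \eqref{mms.1} is already known), and then to obtain the answer for $\cN_2=\tcN_2/\bbZ_2$ by extracting $\bbZ_2$-invariants. For $\tcN_2$: since the circle bundle $\cN_2\to\cM^0_2$ is flat and carries an isometric $\bbS^1$-action (the one whose quotient is $\cM^0_2$), the lifted fibered cusp metric on $\bbS^1\times\tcM^0_2$ is $\bbS^1$-invariant, and in the fibered cusp scaling the $\bbS^1$-orbits have length comparable to a boundary defining function $x$ of $\tcM^0_2$. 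Fourier-decomposing forms in the $\bbS^1$-variable, the non-constant modes carry a uniform spectral gap, so they contribute nothing to the reduced weighted $L^2$-cohomology; on the zero mode the de Rham complex splits as the weighted $L^2$ de Rham complex of $\tcM^0_2$ with $x$-weight shifted by $-\tfrac12$, together with a degree-shifted copy with $x$-weight shifted by $+\tfrac12$. For $0<\epsilon<\tfrac12$ all of the resulting weights lie in $(-1,1)\setminus\{0\}$, so \eqref{mms.1} applies to each summand and gives
\[
\WH^q_{\QFC}(\tcN_2,\phi,\pm\epsilon)\ \cong\ H^q(\bbC\bbP^2)\oplus H^{q-1}(\bbC\bbP^2)\ \cong\ \bigl(H^*(\bbS^1)\otimes H^*(\bbC\bbP^2)\bigr)^{q},
\]
which is one-dimensional for $q\in\{0,1,2,3,4,5\}$ and vanishes otherwise; this is the first claim. (If the lifted metric happens to be arranged as a genuine metric product, one may instead quote Zucker's K\"unneth formula \cite[Theorem~2.29]{Zucker} directly.)

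For $\cN_2$ I would use that the deck group $\bbZ_2$ acts by isometries preserving the weight, so that $\WH^q_{\QFC}(\cN_2,\phi,\pm\epsilon)$ is the $\bbZ_2$-invariant subspace of $\WH^q_{\QFC}(\tcN_2,\phi,\pm\epsilon)$ (pull back, then average over $\bbZ_2$). The nontrivial deck transformation acts on $\bbS^1\times\tcM^0_2$ by $(\theta,p)\mapsto(\theta+\alpha,\sigma(p))$, where $\sigma$ is the deck transformation of $\tcM^0_2=\bbC\bbP^2\setminus\bbR\bbP^2$, i.e.\ the restriction of complex conjugation on $\bbC\bbP^2$, and $\alpha$ is the ($2$-torsion) holonomy of the flat connection, so rotation by $\alpha$ is isotopic to the identity on $\bbS^1$. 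Hence on the two K\"unneth factors the deck transformation acts trivially on $H^*(\bbS^1)$, and---since \eqref{mms.1} comes from the natural identification of weighted $L^2$-cohomology with the intersection cohomology of the smooth space $\bbC\bbP^2$, and $\sigma$ preserves the fibered cusp structure---by $\sigma^{*}$ on $H^*(\bbC\bbP^2)$. Complex conjugation fixes $H^0(\bbC\bbP^2)$ and $H^4(\bbC\bbP^2)$ and reverses the hyperplane class, hence acts by $-1$ on $H^2(\bbC\bbP^2)$. Taking invariants degree by degree leaves $\bbR$ in degrees $0,1$ (from $H^0(\bbC\bbP^2)$) and $4,5$ (from $H^4(\bbC\bbP^2)$) and kills the classes in degrees $2,3$ (from $H^2(\bbC\bbP^2)$), giving the stated answer for $\cN_2$.

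The main obstacle is the first step: pinning down the asymptotics of the lifted fibered cusp metric on $\tcN_2$ precisely enough to run the Fourier reduction---in particular the $O(x)$ behaviour of the $\bbS^1$-orbit lengths, which is exactly what produces the $\pm\tfrac12$ weight shifts and forces the hypothesis $\epsilon<\tfrac12$---together with verifying that the identification in \eqref{mms.1} is $\sigma$-equivariant. The spectral-gap vanishing of the non-constant Fourier modes, and the standard behaviour of complex conjugation on $H^*(\bbC\bbP^2)$, should then be routine.
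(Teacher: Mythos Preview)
Your approach is essentially the same as the paper's, with two minor differences in execution. First, where you propose a Fourier decomposition with a spectral-gap argument for the nonzero modes (and flag this as the ``main obstacle''), the paper sidesteps it entirely: since the $\bbS^1$ factor is part of the fiber of the fibered boundary structure, the fibered cusp metric is quasi-isometric to the warped product $g_{\tcM^0_2}+x^2g_{\bbS^1}$, and Zucker's K\"unneth formula \cite[Theorem~2.29]{Zucker} applies directly to give exactly your decomposition
\[
\WH^q_{\QFC}(\tcN_2,\phi,\pm\epsilon)\cong \WH^q_{\QFC}(\tcM^0_2,\phi,\pm\epsilon-\tfrac12)\oplus \WH^{q-1}_{\QFC}(\tcM^0_2,\phi,\pm\epsilon+\tfrac12)\cong H^q(\bbC\bbP^2)\oplus H^{q-1}(\bbC\bbP^2).
\]
You mention this route parenthetically; it is in fact what the paper does, and it removes your stated obstacle.

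Second, for the $\bbZ_2$-action on $H^2(\bbC\bbP^2)$, the paper does not unwind the deck transformation as complex conjugation but instead invokes Hitchin's proof of the Sen conjecture for $\tcM^0_2$, which directly says the $\bbZ_2$-invariant part of the middle-degree $L^2$-harmonic forms vanishes. Your argument via $\sigma^*h=-h$ is more elementary and equally valid, provided one accepts the identification of the deck transformation with complex conjugation (which follows from $\bbC\bbP^2/\sigma\cong S^4$); the paper's route trades this geometric identification for a citation.
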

\begin{proof}
On $\widetilde{\cN}_2\cong \bbS^1\times \tcM^0_2$,  the $L^2$-metric is a fiber boundary metric with the factor $\bbS^1$ part of the fiber with respect to the fiber bundle on the boundary.  Thus, for the purpose of computing the weighted $L^2$-cohomology of a conformally related fibered cusp metric, we can consider a fibered cusp metric of the form
\begin{equation}
          g_{\tcN_2}=  g_{\tcM^0_2} + x^2g_{\bbS^1}
\label{mms.3}\end{equation}
where $g_{\tcM^0_2}$ is the corresponding fibered cusp metric on $\tcM^0_2$ and $x$ is the boundary defining function of the $\QFB$-compactification on $\tcM_2^0$.  In particular, since \eqref{mms.3} is a warped product and the exterior differential $d$ on $\bbS^1$ is closed, we can apply \cite[Theorem~2.29]{Zucker} to compute weighted $L^2$-cohomology.      Combined with \eqref{mms.1}, this gives
\begin{equation}
\begin{aligned}  
\WH^q_{\QFC}(\tcN_2,\phi,\pm\epsilon) &\cong  \WH^q_{\QFC}(\tcM^0_2,\phi,\pm\epsilon-\frac12)\oplus \WH^{q-1}_{\QFC}(\tcM^0_2,\phi,\pm\epsilon+\frac12) \\
    &\cong H^{q}(\bbC\bbP^2)\oplus H^{q-1}(\bbC\bbP^2)
\end{aligned}
\label{mms.4}\end{equation} 
as claimed.  For $\cN_2$, it suffices to take the $\bbZ_2$-invariant part of \eqref{mms.4} under the action of $\bbZ_2$ and to notice that $H^2(\bbC\bbP^2)$ has no non-trivial $\bbZ_2$-invariant, this latter fact being a consequence of the Sen conjecture  on $\tM^0_2$ proved by Hitchin \cite{Hitchin}.  
\end{proof}

\begin{proposition}
Let $(\bcM^0_3,\phi)$ be the  $\QFB$-compactification constructed in \cite{FKS} of $\tcM^0_3$ and let $\hcM^0_3$ be the corresponding stratified space.  Fix $0<\epsilon<\frac12$.  If $a$ is a multiweight such that $a_i=\epsilon$ for all $i$, then 
\begin{equation}
   \WH_{\QFC}^*(\bcM^0_3,\phi, a)\cong \IH^*_{\overline{\mathfrak{m}}}(\hcM^0_3) \quad \mbox{and} \quad \WH_{\QFC}^*(\bcM^0_3,\phi, -a)\cong \IH^*_{\underline{\mathfrak{m}}}(\hcM^0_3). 
\label{mms.5e}\end{equation}
Moreover, for $q=\frac{m}2=\frac{\dim \bcM^0_3}2=4$, the space $\WH^4_{\QFB}(\bcM^0_3,\phi,a)$ is Poincaré dual to $\WH^4_{\QFB}(\bcM^0_3,\phi,-a)$ and there are natural maps
\begin{gather}
\label{mms.5b}  \WH^4_{\QFB}(\bcM^0_3,\phi, a)\to L^2\cH^4(\tcM^0_3 , g_{\QFB}), \\
\label{mms.5c} L^2\cH^4(\tcM^0_3, g_{\QFB})\to \WH^4_{\QFB}(\bcM^0_3,\phi,-a),
\end{gather}
with composition
\begin{equation}
\WH^4_{\QFB}(\bcM^0_3,\phi, a)\to L^2\cH^4(\tcM^0_3, g_{\QFB})\to \WH^4_{\QFB}(\bcM^0_3,\phi,-a)
\label{mms.5d}\end{equation}
corresponding to the obvious map, where $g_{\QFB}$ is a choice of $\QFB$-metric compatible with the manifold with fibered corners $(\bcM^0_3,\phi)$.    

\label{mms.5}\end{proposition}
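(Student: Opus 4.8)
The plan is to obtain \eqref{mms.5e} from Corollary~\ref{wl2.35} applied to $(\bcM^0_3,\phi)$ with the weights $\pm a=(\pm\epsilon,\dots,\pm\epsilon)$, and then to read off the middle-degree assertions from Corollary~\ref{sa.8} together with the Segal--Selby argument already used in the proof of Proposition~\ref{do.5}.  The first step is to recall from \cite{FKS} (see also \cite{Kottke-Singer}) the manifold with fibered corners $(\bcM^0_3,\phi)$: it has exactly two boundary hypersurfaces, indexed by the two nontrivial clustering patterns $2+1$ and $1+1+1$ of three monopoles, with $H_{2+1}$ submaximal and $H_{1+1+1}$ maximal, and $\dim\bcM^0_3=8$ so middle degree is $q=4$.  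The fibre $Z_{2+1}$ of $\phi$ over the $2+1$ face is a $\QFB$-compactification of the universal cover $\tcM^0_2\cong\bbC\bbP^2\setminus\bbR\bbP^2$ of the reduced charge-$2$ moduli space (its associated stratified space being the smooth $\bbC\bbP^2$), while the fibre $Z_{1+1+1}$ over the maximal face is a torus, hence a closed manifold; in particular $\dim Z_{1+1+1}>0$, so $\hcM^0_3$ has no codimension-one singular stratum and Corollary~\ref{wl2.35} is in principle available.

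The substantive point is to verify the inclusion hypothesis of Corollary~\ref{wl2.35} (equivalently of Corollary~\ref{sa.6}): for each boundary hypersurface $H_i$ and each degree $q$ with $q-1<\frac{m_i}{2}-a_i\le q$, where $m_i=\dim Z_i$ and $a_i=\pm\epsilon$, the natural map $\WH^q_{\QFC}(Z_i,\phi,\cdot)\to H^q(Z_i)$ must be injective.  For $Z_{1+1+1}$ a torus this is automatic: the weighted $L^2$-cohomology of a closed manifold is its de Rham cohomology and the map is the identity.  For $Z_{2+1}$ one has $m_i=4$, so the relevant degrees are $q=2$ for the weight $+\epsilon$ and $q=3$ for the weight $-\epsilon$; by \eqref{mms.1} (together with its internal recursion and, if needed, Lemma~\ref{mms.2}), $\WH^q_{\QFC}(\tcM^0_2,\phi,\pm\epsilon)\cong H^q(\bbC\bbP^2)$, which is $\bbR$ for $q=2$ and $0$ for $q=3$.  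Hence injectivity is vacuous in degree $3$, while in degree $2$ it amounts to the restriction $H^2(\bbC\bbP^2)\to H^2(\bbC\bbP^2\setminus\bbR\bbP^2)$ being injective --- and this holds because the relative groups $H^2(\bbC\bbP^2,\bbC\bbP^2\setminus\bbR\bbP^2)$ and $H^3(\bbC\bbP^2,\bbC\bbP^2\setminus\bbR\bbP^2)$ vanish with real coefficients, by the Thom isomorphism for the non-orientable normal bundle $T\bbR\bbP^2$ (alternatively, this is the $k=2$ case of the Sen conjecture, \cite{Hitchin}, \cite[\S 7.1.2]{HHM2004}).  The genericity condition $a_i\ne\frac{m_i}{2}-q+1$ is automatic since $0<\epsilon<\frac12$.

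With the hypotheses in hand I would conclude as follows.  Since $\pm a$ lies in the same chamber of weight space as the weight attached to $\overline{\mathfrak{m}}$ (resp.\ $\underline{\mathfrak{m}}$) in Corollary~\ref{wl2.35}, the local dichotomy of Theorem~\ref{wl2.26} for $\pm a$ agrees with the local behaviour of $\IH^*_{\overline{\mathfrak{m}}}$ (resp.\ $\IH^*_{\underline{\mathfrak{m}}}$), and running the Mayer--Vietoris induction on depth exactly as in the proof of Corollary~\ref{wl2.35} gives \eqref{mms.5e}.  In middle degree $q=4$ one has $\WH^4_{\QFB}(\bcM^0_3,\phi,\pm a)=\WH^4_{\QFC}(\bcM^0_3,\phi,\pm a)$ by \eqref{wl2.14}, and since both $a$ and $-a$ satisfy the hypotheses of Corollary~\ref{sa.6}, Corollary~\ref{sa.8} yields the asserted Poincar\'e duality between $\WH^4_{\QFB}(\bcM^0_3,\phi,a)$ and $\WH^4_{\QFB}(\bcM^0_3,\phi,-a)$.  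The maps \eqref{mms.5b}, \eqref{mms.5c} and the claim about \eqref{mms.5d} are then obtained verbatim from the proof of Proposition~\ref{do.5}: a closed representative of a class in $\WH^4_{\QFB}(\bcM^0_3,\phi,a)$ is an $x^aL^2$-form, hence (as $\epsilon>0$) an $L^2$-form for $g_{\QFB}$, so defines a class in $\overline{H}^4_{(2)}(\tcM^0_3,g_{\QFB})\cong L^2\cH^4(\tcM^0_3,g_{\QFB})$, well-definedness following from the cut-off argument $\eta_k=\psi(-\log v-k)\eta$ and the boundedness of $dv/v^2$ for a $\QFB$-metric; conversely an $L^2$-harmonic form for $g_{\QFB}$ is closed and (as $\epsilon>0$) lies in $x^{-a}L^2$, hence defines a class in $\WH^4_{\QFB}(\bcM^0_3,\phi,-a)$; and the composition is the obvious map by the pairing identity $\int\beta\wedge\alpha=\lim_k\int d(\gamma_k\wedge\alpha)=0$ combined with the Poincar\'e duality just established, as in \cite[Lemma~1.4]{Segal-Selby}.

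The main obstacle is the input for the first two paragraphs: extracting from \cite{FKS} the precise iterated fibration structure of $\bcM^0_3$ --- above all the identification of the boundary fibres $Z_i$ and of the $\QFB$-metrics they inherit --- and then checking, in the two relevant degrees, that $\WH^q_{\QFC}(Z_i,\phi,\cdot)$ injects into $H^q(Z_i)$.  Everything downstream of that is a bookkeeping application of Theorem~\ref{wl2.26}, Corollaries~\ref{wl2.35}, \ref{sa.6} and \ref{sa.8}, and the Segal--Selby argument of Proposition~\ref{do.5}; in particular no decay estimate for $L^2$-harmonic forms is needed at this stage.
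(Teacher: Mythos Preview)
Your overall architecture is right, but the identification of the submaximal fibre is wrong, and this is exactly the ``main obstacle'' you flag at the end.  The fibre $Z_1$ over the $2{+}1$ face is \emph{not} (a compactification of) $\tcM^0_2$; it is a $\bbZ_3$-cover of the \emph{centered} charge-$2$ moduli space $\cN_2$, which is the circle bundle over $\cM^0_2$ and hence is $5$-dimensional, not $4$-dimensional.  A simple count confirms this: $\dim\bcM^0_3=8$, so $\dim H_1=7$; the base $S_1$ is $\bbR\bbP^2$, so the fibre has dimension $5$.  Consequently your computation with $m_i=4$, your identification of the critical degrees as $q=2,3$, and your injectivity check $H^2(\bbC\bbP^2)\hookrightarrow H^2(\bbC\bbP^2\setminus\bbR\bbP^2)$ all address the wrong space.

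With the correct fibre $\cN_2$ (and $m_1=5$) the critical degree for both weights $\pm\epsilon$ is $q=3$, and the hypothesis of Corollary~\ref{wl2.35} becomes the vanishing $\WH^3_{\QFC}(\cN_2,\phi,\pm\epsilon)=0$, which is exactly what Lemma~\ref{mms.2} provides.  There is, however, a second point you have missed entirely: since we are working on the universal cover $\tcM^0_3$, the fibre that actually occurs is a connected $\bbZ_3$-cover of $\cN_2$, and one must argue that this cover is $\cN_2$ itself.  The paper does this by a nontrivial covering-space argument using $\pi_1(\cN_2)\cong\bbZ$, the map $\pi_1(\cN_2)\to\pi_1(\cM^0_3)\cong\bbZ_3$, and the computation $H^2(\cM^0_2;\bbZ)\cong\bbZ_2$ to show $L^{\otimes 3}\cong L$ for the associated line bundle.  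Your middle-degree paragraph (Poincar\'e duality via Corollary~\ref{sa.8}, and the construction of \eqref{mms.5b}--\eqref{mms.5d} as in Proposition~\ref{do.5}) is fine and matches the paper.
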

\begin{proof}
As described in \cite{FKS}, the manifold with fibered corners $\bcM^0_3$ has  two boundary hypersurfaces.  Denoting them $H_1$ and $H_2$ with $H_1<H_2$, we know that the fiber $Z_2$ of $H_2$ is a $\bbZ_3$-cover of a $2$-dimensional torus.  In particular, $Z_2$ is a closed manifold and the assumption of Corollary~\ref{wl2.35} is automatically satisfied for $i=2$.  For $i=1$, the typical fiber $Z_1$ of $\phi_1: H_1\to S_1$ is a $\bbZ_3$-cover of $\cN_2$.  

Now, by the discussion in \cite[p.20]{Atiyah-Hitchin}, $\pi_1(\cN_2)\cong \bbZ$ and a generator of this group is sent to a generator of $\pi_1(\cM^0_3)\cong \bbZ_3$ under the inclusion 
$\cN_2\to \cM^0_3$.  Hence, on the universal cover $\tcM^0_3$, $\cN_2$ lifts to a (connected) $\bbZ_3$-cover of $\cN_2$.  We claim that this cover is in fact $\cN_2$ itself.  Assuming this, then Lemma~\ref{mms.2} ensures that the assumption of Corollary~\ref{wl2.35} is also satisfied for $i=1$, so that \eqref{mms.5e} follows by applying Corollary~\ref{wl2.35}.

To prove the remaining claim, consider the long exact sequence in homotopy groups induced by the circle bundle $\cN_2\to \cM^0_2$.  In particular, we see from \cite[p.20]{Atiyah-Hitchin} that it induces a short exact sequence
$$
     0\to \pi_1(\bbS^1)\to \pi_1(\cN_2)\to \pi_1(\cM^0_2)\to 0
$$  
which is just the standard exact sequence
$$
        0\to \bbZ\overset{\times 2}{\to} \bbZ\to \bbZ_2\to 0.
$$
Now, the $\bbZ_3$-cover $\cW_2$ of $\cN_2$ corresponds to the normal subgroup $3\bbZ$ of $\pi_1(\cN_2)\cong \bbZ$.  Hence, by standard homotopy lifting properties, under the canonical projection $\nu: \cW_2\to \cN_2$, the pre-image of a fiber $F$ of $\cN_2\to \cM^0_2$ is a circle which is a $\bbZ_3$-cover of $F$.  In other words, if $L$ is the complex line bundle associated to the circle bundle $\cW_2\to \cM^0_2$, then $\cN_2\to \cM^0_2$ is the circle bundle associated to the line bundle $L\otimes L\otimes L$.  However, according to \cite[Theorem~1.1]{Segal-Selby}, $\cM^0_2$ has the rational homology of a point.  Hence, since $\pi_1(\cM^0_2)\cong \bbZ_2$, we conclude by the universal coefficient theorem that $H^2(\cM^0_2)\cong \bbZ_2$,  so that $L\otimes L\otimes L\cong L$. This means that $\cW_2\cong \cN_2$ as claimed. 

When $q=\frac{m}2=4$, Poincar\'e duality follows from Corollary~\ref{sa.8} and the fact that in middle degree
$$
      \WH^4_{\QFB}(\bcM^0_3,\phi, \pm a)= \WH^4_{\QFC}(\bcM^0_3,\phi, \pm a).
 $$
For the definition of the maps \eqref{mms.5b} and \eqref{mms.5c} and the proof that the composition \eqref{mms.5d} is the obvious map, we can proceed exactly as in the proof of Proposition~\ref{do.5}.
\end{proof}

Combining this with the decay of harmonic forms of Theorem~\ref{qfb.12} yields the following.  
\begin{theorem}
Let $g_{\QFB}$ be a choice of $\QFB$-metric compatible with $(\bcM^0_3,\phi)$.  Then there is a natural identification
\begin{equation}
\begin{aligned}
L^2\cH^4(\tcM^0_3,g_{\QFB}) &\cong \Im \left( \WH^4_{\QFB}(\bcM^0_3,\phi,a)\to \WH^4_{\QFB}(\bcM^0_3,\phi,-a), \right) \\
& \cong \Im \left( \IH^4_{\overline{\mathfrak{m}}}(\hcM^0_3)\to \IH^4_{\underline{\mathfrak{m}}}(\hcM^0_3), \right) \end{aligned}
\label{mms.6b}\end{equation}
where the multiweight $a:=(\epsilon,\epsilon)$ for $0<\epsilon<\frac12$ is as in Proposition~\ref{mms.5}.
\label{mms.6}\end{theorem}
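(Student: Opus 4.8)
The plan is to carry over the strategy of Theorem~\ref{do.6} (and its monopole incarnation in Theorem~\ref{hs.12}), feeding the decay of harmonic forms of Theorem~\ref{qfb.12} into the two natural maps \eqref{mms.5b} and \eqref{mms.5c} already produced in Proposition~\ref{mms.5}. Recall that the weighted $L^2$-cohomology groups $\WH^4_{\QFB}(\bcM^0_3,\phi,\pm a)$ and the reduced $L^2$-cohomology $L^2\cH^4(\tcM^0_3,g_{\QFB})\cong\overline{\WH}{}^4(\tcM^0_3,g_{\QFB})$ depend only on the quasi-isometry class of $g_{\QFB}$, and that the maps \eqref{mms.5b}, \eqref{mms.5c} are defined through these reduced cohomology groups (as in the proof of Proposition~\ref{do.5}). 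Hence it is enough to prove the statement for one convenient $\QFB$-metric in the quasi-isometry class, with the same total boundary defining function $v=x_1x_2$.

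The technical heart is to verify that $(\bcM^0_3,\phi)$ satisfies the hypotheses of Theorem~\ref{qfb.12}; this is the step I expect to be the main obstacle. By \cite{FKS}, $\bcM^0_3$ has exactly two boundary hypersurfaces $H_1<H_2$, with $H_2$ maximal and $H_1$ submaximal, so $\cI_{msm}=\{1,2\}$ and $\prod_{i\in\cI_{msm}}x_i^\epsilon=v^\epsilon$ up to a bounded factor. The fiber of $\phi_2\colon H_2\to S_2$ is the flat $\bbZ_3$-cover $Z_2$ of a $2$-torus, so the flat bundle $F_2\to S_2$ of fiberwise harmonic forms, and the associated Hodge-deRham operator $\mathfrak{d}_{S_2}$, are completely explicit; the fiber of $\phi_1\colon H_1\to S_1$ is $\cN_2$ (using the identification $\cW_2\cong\cN_2$ established in the proof of Proposition~\ref{mms.5}), whose relevant $L^2$-cohomology is computed in Lemma~\ref{mms.2}, which itself rests on the case $k=2$ of the Sen conjecture \cite{Hitchin}. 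The required Witt conditions for $(F_i,g_{S_i})$ and the vanishing of the $L^2$-kernels of $\mathfrak{d}_{S_1},\mathfrak{d}_{S_2}$ in the degrees $\tfrac{\dim S_i\pm q}2$, $q\in\{0,1,2\}$, as well as the alternative \eqref{qfb.13}/\eqref{qfb.14} for the submaximal hypersurface $H_1$, must be checked by unwinding the iterated fibration structure: the low-dimensional bases $S_1,S_2$ resolve orbifold quotients of spheres (so their intersection cohomology, hence the spectral input via \cite{ALMP2012}, is controlled as in the proof of Lemma~\ref{nak.6}), and the $L^2$-harmonic forms of $\cN_2$ needed in \eqref{qfb.13}/\eqref{qfb.14} are supplied by Lemma~\ref{mms.2}. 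Granting this verification, Theorem~\ref{qfb.12} produces a $\QFB$-metric $\widetilde g_{\QFB}$ quasi-isometric to $g_{\QFB}$, with the same $v$, such that $L^2\cH^4(\tcM^0_3,\widetilde g_{\QFB})\subset v^\epsilon L^2\Omega^4$ for some $\epsilon>0$.

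Working with $\widetilde g_{\QFB}$, this inclusion shows every harmonic $4$-form represents a class in $\WH^4_{\QFB}(\bcM^0_3,\phi,a)$ with $a=(\epsilon,\epsilon)$, so the map \eqref{mms.5b} is surjective. Injectivity of \eqref{mms.5c} follows by the argument of \cite[Lemma~1.4]{Segal-Selby} exactly as in Proposition~\ref{do.5}: a nonzero $\omega\in L^2\cH^4$ has $\int_{\tcM^0_3}\omega\wedge *\omega=\|\omega\|^2\neq0$, so by the Poincar\'e duality between $\WH^4_{\QFB}(\bcM^0_3,\phi,a)$ and $\WH^4_{\QFB}(\bcM^0_3,\phi,-a)$ recorded in Proposition~\ref{mms.5}, $\omega$ is nonzero in $\WH^4_{\QFB}(\bcM^0_3,\phi,-a)$. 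Since by Proposition~\ref{mms.5} the composite \eqref{mms.5d} is the obvious (inclusion-induced) map, surjectivity of \eqref{mms.5b} together with injectivity of \eqref{mms.5c} identifies $L^2\cH^4(\tcM^0_3,g_{\QFB})$ with $\Im\big(\WH^4_{\QFB}(\bcM^0_3,\phi,a)\to\WH^4_{\QFB}(\bcM^0_3,\phi,-a)\big)$ via \eqref{mms.5c}, giving the first isomorphism in \eqref{mms.6b}; the map is natural because \eqref{mms.5c} is. For the second isomorphism, in the middle degree the weight shift $\tfrac m2-q$ in \eqref{wl2.14} vanishes, so $\WH^4_{\QFB}(\bcM^0_3,\phi,\pm a)=\WH^4_{\QFC}(\bcM^0_3,\phi,\pm a)$, and the identifications $\WH^*_{\QFC}(\bcM^0_3,\phi,a)\cong\IH^*_{\overline{\mathfrak{m}}}(\hcM^0_3)$ and $\WH^*_{\QFC}(\bcM^0_3,\phi,-a)\cong\IH^*_{\underline{\mathfrak{m}}}(\hcM^0_3)$ of \eqref{mms.5e} are, by the Mayer--Vietoris construction in Corollary~\ref{wl2.35}, compatible with the inclusion-induced map on the left and the natural map $\IH^*_{\overline{\mathfrak{m}}}(\hcM^0_3)\to\IH^*_{\underline{\mathfrak{m}}}(\hcM^0_3)$ on the right, which converts the first isomorphism into the second.
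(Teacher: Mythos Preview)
Your approach is essentially the paper's: verify the hypotheses of Theorem~\ref{qfb.12} for $(\bcM^0_3,\phi)$, pass to the quasi-isometric $\QFB$-metric it produces, and use the resulting decay $L^2\cH^4\subset v^\epsilon L^2\Omega^4$ to obtain surjectivity of \eqref{mms.5b} and injectivity of \eqref{mms.5c}, concluding via the factorization \eqref{mms.5d} and the identifications of Proposition~\ref{mms.5}.

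There is one concrete inaccuracy in your verification sketch. The second clause of \eqref{qfb.14} asks that $\ker_{L^2}\eth_{Z_1}$, i.e., the space of $L^2$-harmonic forms for the fibered boundary metric on $Z_1=\cN_2$, be trivial in the indicated degrees. Lemma~\ref{mms.2} does \emph{not} supply this: it computes the weighted groups $\WH^q_{\QFC}(\cN_2,\phi,\pm\epsilon)$ for the conformally related fibered cusp metric, which is a different invariant (and is nonzero in degrees $0,1,4,5$). What is actually needed, and what the paper does, is to compute $L^2\cH^q(\cN_2,g_{\cN_2})$ directly: from Hitchin's result $L^2\cH^q(\tcM^0_2,\widetilde g_2)\cong\bbR$ for $q=2$ and $\{0\}$ otherwise, one gets $L^2\cH^q(\tcN_2)=L^2\cH^q(\bbS^1\times\tcM^0_2)$ by separation of variables, and then passes to the $\bbZ_2$-invariant part. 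Since the nontrivial harmonic forms on $\tcM^0_2$ are \emph{not} $\bbZ_2$-invariant (this is exactly the Sen conjecture for $k=2$), one obtains $L^2\cH^*(\cN_2)=\{0\}$. This vanishing both verifies the $\ker_{L^2}\eth_{Z_1}$ condition in \eqref{qfb.14} and forces the flat bundle $F_1\to S_1$ of fiberwise $L^2$-harmonic forms to be zero, so that $\mathfrak d_{S_1}$ is the zero operator and its kernel condition is vacuous. With this correction your argument goes through and coincides with the paper's.
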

\begin{proof}
By Proposition~\ref{mms.5}, there is an inclusion 
\begin{equation}
\Im(\WH^4_{\QFB}(\bcM^0_3,\phi,a)\to \WH^4_{\QFB}(\bcM^0_3,\phi,-a))\hookrightarrow L^2\cH^4(\tcM^0_3,g_{\QFB}),
\label{mms.7}\end{equation}
so it suffices to show that this inclusion is surjective.  Since the statement only depends on the quasi-isometry class of $g_{\QFB}$, to show this, we can choose $g_{\QFB}$ as we want within this class.  Now, if all the hypotheses of Theorem~\ref{qfb.12} are fulfilled, then we know by this theorem that $g_{\QFB}$ can be chosen so that 
\begin{equation}
    L^2\cH^4(\tcM^0_3,g_{\QFB})\subset (x_1x_2)^{\mu}L^2\Omega^4(\tcM^0_3,g_{\QFB})
\label{mms.8}\end{equation}
for some $\mu>0$,  so that all $L^2$-harmonic forms come from the inclusion \eqref{mms.7} as claimed.  

Thus, to complete the proof, it suffices to show that Theorem~\ref{qfb.12} can be applied.  
First, by the discussion in \cite{FKS}, the base $S_1$ is $\bbR\bbP^2$, while $S_2$ is the manifold with fibered corners (in fact more simply the manifold with fibered boundary) resolving the quotient 
\begin{equation}
  \widehat{S}_2= \bbS^5/\mathbf{S}_3
\label{mms.9}\end{equation}   
of $\bbS^5\subset \bbR^6$ by the action of the symmetric group $\mathbf{S}_3$ on $\bbR^6$ generated by
\begin{equation}
\left(\begin{array}{cc} 0 &\mathbb{I}_3  \\ \mathbb{I}_3 & 0 \end{array} \right) \quad \mbox{and} \quad
 \left(\begin{array}{cc} 0 &-\mathbb{I}_3  \\ \mathbb{I}_3 & -\mathbb{I}_3 \end{array} \right).
 \label{mms.10}\end{equation}
Under this action, the subsets of points of $\bbS^5$ where the action is not free correspond to three disjoint 2-spheres $\bbS^2$ in $\bbS^5$.  Under the quotient map, these three $2$-spheres are mapped onto the singular stratum of $\widehat{S}_2$, which is just $S_1=\bbR\bbP^2$.  Moreover, the link of the stratum is also diffeomorphic to $\bbR\bbP^2$.  Since $\bbR\bbP^2$ has no cohomology in middle degree, condition \eqref{qfb.12aa} is satisfied for $H_1<H_2$.  Moreover, since $\widehat{S}_2$ is a quotient of $\bbS^5$, taking the wedge metric  $g_{w}$ induced by the standard metric on $\bbS^5$, we see that $\widehat{S}_2$ has no non-trivial $L^2$-harmonic forms in degrees
$\frac{\bd_2\pm1}2= \frac{5\pm 1}2$.  Obviously, for dimensional reasons, there are no $L^2$-harmonic forms in degrees $\frac{\bd_2}2=\frac52$ and $\frac{\bd_2\pm 2}2$, so that condition \eqref{qfb.12a} holds for $H_2\le H_2$.    On the other hand, let $\widetilde{g}_2$ be the natural hyperK\"ahler metric on $\tcM^0_2$, which is known to be a fibered boundary metric.  From \cite{Hitchin}, we know that 
\begin{equation}
  L^2\cH^q(\tcM^0_2,\widetilde{g}_2)\cong \left\{\begin{array}{ll} \bbR, & q=2, \\
  \{0\}, & \mbox{otherwise}.\end{array}  \right.
\label{mms.11}\end{equation}
Correspondingly, the natural fibered boundary metric on $\widetilde{\cN}_2= \bbS^1\times \tcM^0_2$ is the Cartesian product metric $g_{\bbS^1}+ \widetilde{g}_{2}$ with $g_{\bbS^1}$ the standard metric on $\bbS^1$.  Using separation of variables, we obtain from \eqref{mms.11} that 
\begin{equation}
  L^2\cH^q(\tcN_2,g_{\bbS_1}+\widetilde{g}_2)\cong \left\{\begin{array}{ll} \bbR, & q=2,3, \\
  \{0\}, & \mbox{otherwise},\end{array}  \right.
\label{mms.12}\end{equation}
a result that follows alternatively from \cite[Corollary~1]{HHM2004}.
Since the corresponding $L^2$-harmonic forms are not invariant under the natural $\bbZ_2$-action by the Sen conjecture for $\tcM^0_2$, this means that  the quotient $\cN_2= \tcN_2/\bbZ_2$ has no non-trivial $L^2$-harmonic forms with respect to the induced metric $g_{\cN_2}$,
 \begin{equation}
   L^2\cH^q(\cN_2, g_{\cN_2})=\{0\}.
\label{mms.13}\end{equation}  
In particular, this means that condition \eqref{qfb.12b} is autmatically satisfied and that the operator $\mathfrak{d}_{S_1}$ is trivial, which implies that condition \eqref{qfb.12a} for $H_1\le H_1$ is automatically satisfied.  
By Theorem~\ref{qfb.12}, this means that we can find a $\QFB$-metric $g_{\QFB}$ such that \eqref{mms.8} holds, completing the proof of the theorem.
\end{proof}

In middle degree, the intersection cohomology with lower or upper middle perversity can in fact be described in terms of the usual cohomology as the next proposition shows.
\begin{proposition}
The natural maps 
$$
    H_c^4(\tcM^0_3)\to \IH^4_{\overline{\mathfrak{m}}}(\hcM^0_3) \quad \mbox{and} \quad \IH^4_{\underline{\mathfrak{m}}}(\hcM^0_3)\to H^4(\tM^0_3)    
$$
are isomorphisms.
\label{mms.14}\end{proposition}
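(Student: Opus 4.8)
The plan is to reduce the two assertions to a single one and then carry out an explicit Mayer--Vietoris computation, invoking \cite{Segal-Selby} to handle the one delicate step. First, by Corollary~\ref{sa.8} together with the identifications of Proposition~\ref{mms.5}, intersection-cohomology Poincar\'e duality identifies $\IH^4_{\overline{\mathfrak{m}}}(\hcM^0_3)$ with the dual of $\IH^4_{\underline{\mathfrak{m}}}(\hcM^0_3)$, while ordinary Poincar\'e duality for the open $8$-manifold $\tcM^0_3$ identifies $H^4_c(\tcM^0_3)$ with the dual of $H^4(\tcM^0_3)$. Under these identifications the map $H^4_c(\tcM^0_3)\to\IH^4_{\overline{\mathfrak{m}}}(\hcM^0_3)$ and the map $\IH^4_{\underline{\mathfrak{m}}}(\hcM^0_3)\to H^4(\tcM^0_3)$ are transposes of one another, both being computed by the wedge pairing $\int_{\tcM^0_3}\omega\wedge\eta$ between a compactly supported closed form and an $x^{-a}L^2$-closed form. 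Hence it suffices to prove that $\IH^4_{\underline{\mathfrak{m}}}(\hcM^0_3)\to H^4(\tcM^0_3)$ is an isomorphism.

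Next I would assemble the data. By \cite{FKS}, $\hcM^0_3$ has exactly two singular strata: a codimension-$3$ stratum, namely the regular part of $\widehat S_2=\bbS^5/\mathbf{S}_3$, along which the link is the flat closed surface $Z_2\cong\bbT^2$; and the deepest stratum $\widehat S_1\cong\bbR\bbP^2$, of codimension $6$, whose link is the compact $5$-dimensional stratified space $\widehat{\cN}_2$ obtained by collapsing the fibered boundary of the compactification of $\cN_2$ (using here the identification of the fibre $Z_1$ with $\cN_2$ established in Proposition~\ref{mms.5}). The cohomology of these links is then available: $H^*(\bbT^2)$ is standard, and $H^*(\cN_2;\bbR)$ together with the groups $\WH^*_{\QFC}(\cN_2,\phi,\pm\epsilon)$ follow from Lemma~\ref{mms.2} and the computation of $H^*(\cM^0_2;\bbR)$ in \cite{Segal-Selby}. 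Finally I would invoke from \cite{Segal-Selby} the explicit values of $H^*(\tcM^0_3)$ and $H^*_c(\tcM^0_3)$; the key qualitative feature is the vanishing of $H^q(\tcM^0_3)$ for $q$ above the middle degree $4$.

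With these inputs I would run a sheaf-theoretic Mayer--Vietoris argument on $\hcM^0_3$ exactly in the spirit of the proof of Corollary~\ref{sa.10} and \cite[Proposition~1]{HHM2004}, comparing the sheaf computing $\WH^*_{\QFC}(\,\cdot\,,\phi,-a)$ with the pushforward to $\hcM^0_3$ of the constant sheaf on $\tcM^0_3$. Theorem~\ref{wl2.26} gives the local groups. Near $\widehat S_1$, using Lemma~\ref{mms.2} and the odd dimension of $\widehat{\cN}_2$, one checks that $\WH^q_{\QFC}(\cU_1,\phi,-a)\to H^q(\cN_2)$ is an isomorphism in every degree, so this stratum produces no discrepancy. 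Near the codimension-$3$ stratum one instead gets $\WH^q_{\QFC}(\cU_2,\phi,-a)=H^q(\bbT^2)$ for $q\le 1$ and $0$ for $q\ge 2$, so the comparison map is an isomorphism except in local degree $q=2$, where it is $0\to H^2(\bbT^2)\cong\bbR$. Consequently $\IH^4_{\underline{\mathfrak{m}}}(\hcM^0_3)\to H^4(\tcM^0_3)$ is an isomorphism provided the single rank-one contribution responsible for this local-degree-$2$ discrepancy, which lives over the $5$-manifold $\widehat S_2\setminus\widehat S_1\cong(\bbS^5/\mathbf{S}_3)\setminus\bbR\bbP^2$ with coefficients in the local system $\mathcal{H}^2(Z_2)$, does not survive into total degree $4$; this is verified through the Leray spectral sequence of $\widehat S_2\setminus\widehat S_1$ together with the value of $H^4(\tcM^0_3)$ supplied by \cite{Segal-Selby}.

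The main obstacle is precisely this last point. Because the torus link $\bbT^2$ carries nontrivial cohomology in its top degree, $\hcM^0_3$ is not Witt, the groups $\IH^*_{\overline{\mathfrak{m}}}$ and $\IH^*_{\underline{\mathfrak{m}}}$ genuinely differ, and — unlike the situation of Corollary~\ref{sa.10}, where the local models of weighted $L^2$-cohomology matched absolute cohomology degree by degree — here one cannot match local models and must instead control globally how the degree-$2$ fibre class over $\widehat S_2\setminus\widehat S_1$ interacts with the differentials of the spectral sequence. Carrying out this cohomological bookkeeping on $(\bbS^5/\mathbf{S}_3)\setminus\bbR\bbP^2$ and pinning down the residual ambiguity via Segal and Selby's explicit computation of $H^4(\tcM^0_3)$ is the step I expect to require the most care, and it is the reason the argument is more intricate than its analogue for $\Hilb^n_0(\bbC^2)$.
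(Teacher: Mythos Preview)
Your strategy is essentially the Poincar\'e-dual version of the paper's, and the underlying local computations are the same: both arguments rest on the Leray spectral sequence over the deep stratum $\widehat{S}_1\cong\bbR\bbP^2$ with fibre $C(\widehat{\cN}_2)$, together with the cohomology of $s_2=(\bbS^5/\mathbf{S}_3)\setminus\bbR\bbP^2$ extracted from the pair $(\bbS^5,\Sigma)$ where $\Sigma$ is three disjoint copies of $\bbS^2$.  The paper, however, organises these ingredients differently.  It works with $H^4_c\to\IH^4_{\overline{\mathfrak{m}}}$ and uses from \cite{Segal-Selby} not the numerical value of $H^4(\tcM^0_3)$ but the fact that the composite $H^4_c(\tcM^0_3)\to H^4(\tcM^0_3)$ is already an isomorphism; this gives injectivity for free, so only surjectivity remains, and that follows from $\IH^4_{\overline{\mathfrak{m}}}(\hU\cup\hV)=0$ for $\hU\cup\hV$ a neighbourhood of the singular set, proved via the two local computations above and the relative long exact sequence of the pair $(\hU\cup\hV,\hU)$.

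Your cone-of-sheaf-maps formulation is valid, but the role you assign to \cite{Segal-Selby} is the weak point.  In your long exact sequence you need both $H^3$ and $H^4$ of the cone to vanish, i.e.\ $H^1_c(s_2;\mathcal{H}^2(Z_2))=H^2_c(s_2;\mathcal{H}^2(Z_2))=0$; since the local system is trivial this follows directly from the $(\bbS^5,\Sigma)$ computation and needs no input from \cite{Segal-Selby} at all.  By contrast, knowing only the \emph{value} of $H^4(\tcM^0_3)$ cannot ``pin down the residual ambiguity'': a dimension match does not distinguish an isomorphism from a map with equal-dimensional kernel and cokernel.  So either drop the appeal to the value of $H^4$ and compute $H^{1}_c(s_2)$ and $H^2_c(s_2)$ directly, or, as the paper does, feed in the Segal--Selby \emph{isomorphism} $H^4_c\cong H^4$ at the start to get one of injectivity/surjectivity for free and halve the remaining work.
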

\begin{proof}
Since these two maps are Poincar\'e dual to one another, it suffices to prove that the first one is an isomorphism.  Now, by \cite[Theorem~1.3]{Segal-Selby}, the composition 
$$
H_c^4(\tcM^0_3)\to \IH^4_{\overline{\mathfrak{m}}}(\hcM^0_3)\to H^4(\tcM^0_3)
$$ 
is an isomorphism, implying that the map
\begin{equation}
H_c^4(\tcM^0_3)\to \IH^4_{\overline{\mathfrak{m}}}(\hcM^0_3)
\label{mms.15}\end{equation}
is injective.  To show it is surjective will require more work and local computations.  Let $\cU$ be a collar neighborhood of $H_1$ in $\bcM^0_3$.  Denote by $\hU$ the corresponding neighborhood of the stratum $S_1$ in $\hcM^0_3$.  Let $\hV$ be an open neighborhood of $\widehat{S}_2\setminus (\hU\cap \widehat{S}_2)$ which retracts onto it.  With these choices, we can assume as well that $\tcM^0_3$ retracts onto $\hcM^{0}_3\setminus (\hU\cup \hV)$, so that
\begin{equation}
    \IH^q_{\overline{m}}(\hcM^0_3, \hU\cup \hV)\cong H^q_{c}(\tcM^0_3).
\label{mms.16}\end{equation}
Hence, from the relative long exact sequence in cohomology
\begin{equation}
\xymatrix{
\cdots \ar[r] & \IH^q_{\overline{\mf{m}}}(\hcM^0_3, \hU\cup\hV) \ar[r] &  \IH^q_{\overline{\mf{m}}}(\hcM^0_3) \ar[r] &
   \IH^q_{\overline{\mf{m}}}(\hU\cup\hV) \ar[r] & \cdots, 
}
\label{mms.17}\end{equation}
we see that \eqref{mms.15} will be surjective provided we can show that $\IH^4_{\overline{\mf{m}}}(\hU\cup\hV)=\{0\}$.  To see this, we will first compute $\IH^4_{\overline{\mf{m}}}(\hU)$.  Recall that by the proof of Proposition~\ref{mms.5}, the fibers of the fiber bundle $\phi_1: H_1\to S_1$ are each diffeomorphic to $\cN_2$.  Since $\hU$ comes from a collar neighborhood of $H_1$ in $\bcM^0_3$, this means that there is a corresponding fiber bundle
\begin{equation}
   \hat{\phi}_1: \hU\to S_1
\label{mms.18}\end{equation}  
whose fibers are cones $C(\hcN_2)$ over $\hcN_2$, the stratified space associated to the manifold with fibered boundary $\cN_2$.  According to \eqref{wl2.36} and \cite[Proposition~1]{HHM2004}, we have that
\begin{equation}
   \IH^q_{\overline{\mf{m}}}(C(\hcN_2))\cong \left\{ \begin{array}{ll} \IH^q_{\overline{\mf{m}}}(\hcN_2), & q\le2, \\
      \{0\}, & \mbox{otherwise.}  \end{array} \right.
\label{mms.19}\end{equation}
By Lemma~\ref{mms.2} and \cite[Proposition~2]{HHM2004}, we thus have that
\begin{equation}
 \IH^q_{\overline{\mf{m}}}(C(\hcN_2))\cong \left\{ \begin{array}{ll} \bbR, & q\le 1, \\
      \{0\}, & \mbox{otherwise.}  \end{array} \right.
\label{mms.20}\end{equation}

Now $S_1\cong \bbR\bbP^2$ with universal cover $\widetilde{S}_1\cong \bbS^2$.  Let 
\begin{equation}
  \widetilde{\hat{\phi}}_1: \widetilde{\hU}\to \widetilde{S}_1
\label{mms.21}\end{equation}
denote the pull-back of the fiber bundle \eqref{mms.18} to $\widetilde{S}_1$.  The space $\widetilde{\hU}$ is then a $\bbZ_2$-cover of $\hU$ with $\bbZ_2$-action covering the $\bbZ_2$-action on $\widetilde{S}_1$.  This implies in particular that $\IH_{\overline{\mf{m}}}^q(\hU)$ corresponds to the $\bbZ_2$-invariant part of $\IH_{\overline{\mf{m}}}^q(\widetilde{\hU})$.   By \eqref{mms.19}, we see as in \cite[Theorem~14.18]{Bott-Tu} that the second page of the Leray spectral sequence of \eqref{mms.21} for intersection cohomology with upper middle perversity is given by 
\begin{equation}
   E^{p,q}_2= H^p(\widetilde{S}_1)\otimes \IH_{\overline{\mf{m}}}^q(C(\hcN_2))= \left\{ \begin{array}{ll}\bbR, & p\in\{0,2\}, \; q\in\{0,1\}, \\
   \{0\}, & \mbox{otherwise.}  \end{array} \right.
\label{mms.22}\end{equation}
In particular, as in \cite[Example~14.22]{Bott-Tu}, for dimensional reasons, the spectral sequence degenerates at page $E_3$, allowing us to conclude that 
$$
\IH^q_{\overline{\mf{m}}}(\widetilde{\hU})=\{0\} \quad \mbox{for} \; q\ge 4.
$$
Taking the $\bbZ_2$-invariant part thus implies that 
\begin{equation}
   \IH^q_{\overline{\mf{m}}}(\hU)=\{0\} \quad \mbox{for} \; q\ge 4.
\label{mms.23}\end{equation}

On $\hV$, the local behavior of intersection cohomology for upper middle perversity also admits a simple description.  Indeed, the fibers of $\phi_2: H_2\to S_2$ are $2$-dimensional tori.  Hence, from \eqref{wl2.36}, given $p\in \widehat{S}_2\cap \hV$ and a small neighborhood $\hV_p$ of $p$ in $\hV$ retracting onto $p$, we have that 
\begin{equation}
 \IH^q_{\overline{\mf{m}}}(\hV_p)\cong \left\{ \begin{array}{ll} \bbR\cong H^0(\bbT^2), & q=0, \\
    \{0\}, & q>0.  \end{array} \right.  
\label{mms.24}\end{equation}
Since $\hV$ retracts onto  $\hV\cap \widehat{S}_2$, this means that 
\begin{equation}
   \IH^q_{\overline{\mf{m}}}(\hV)= H^q(\hV)= H^q(\hV\cap \widehat{S}_2).
\label{mms.25}\end{equation}
Now, from \eqref{mms.9} and \eqref{mms.10}, we know that the universal cover of $\hV\cap \widehat{S}_2$ is homeomorphic to $\bbS^5\setminus \Sigma$ with $\Sigma$ a disjoint union of three $2$-spheres $\bbS^2$ inside $\bbS^5$.  In particular, from the relative long exact sequence in cohomology  
\begin{equation}
\xymatrix{
\cdots \ar[r] & H^q(\bbS^5,\Sigma) \ar[r] & H^q(\bbS^5) \ar[r] & H^q(\Sigma) \ar[r] & \cdots
}
\label{mms.26}\end{equation} 
associated to the pair $(\bbS^5,\Sigma)$, we deduce that 
\begin{equation}
\IH^4_{\overline{\mf{m}}}(\hU\cup \hV, \hU)\cong\IH^4_{\overline{\mf{m}}}(\hV, \hV\cap \hU)\cong H^4_c(\hV\cap \widehat{S}_2)=\{0\}.
\label{mms.27}\end{equation}
Hence, from \eqref{mms.23} and \eqref{mms.27} and the relative long exact sequence in cohomology
\begin{equation}
\xymatrix{
\cdots \ar[r] & \IH^q_{\overline{\mf{m}}}(\hU\cup \hV, \hU)\ar[r] & \IH^q_{\overline{\mf{m}}}(\hU\cup \hV)\ar[r] & \IH^q_{\overline{\mf{m}}}(\hU)\ar[r] & \cdots
}
\label{mms.28}\end{equation}
associated to the pair $(\hU\cup\hV,\hU)$, we deduce that
\begin{equation}
 \IH^4_{\overline{\mf{m}}}(\hU\cup\hV)= \{0\}
\label{mms.29}\end{equation}
as claimed.
\end{proof}

Let $\widetilde{g}_3$ be the natural hyperK\"ahler metric on $\tcM^0_3$.  In \cite{FKS}, it was announced that $\widetilde{g}_3$ is a $\QFB$-metric with respect to the manifold with fibered corners $(\bcM^0_3,\phi)$.  Assuming this result, we can extract from Theorem~\ref{mms.6} and Proposition~\ref{mms.14} a proof of the Sen conjecture \cite{Sen, Segal-Selby}.

\begin{theorem}
The Sen conjecture holds on $\tcM^0_3$, namely
\begin{equation}
     L^2\cH^q(\tcM^0_3,\widetilde{g}_3)\cong \Im (H^q_c(\tcM^0_3)\to H^q(\tcM^0_3))
\label{mms.30b}\end{equation}
for all $q\in\bbN_0$.  
\label{mms.30}\end{theorem}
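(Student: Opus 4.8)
The plan is to assemble pieces that are already in place: Theorem~\ref{mms.6}, which identifies the middle-degree $L^2$-harmonic forms with an image of intersection cohomology groups, and Proposition~\ref{mms.14}, which in middle degree pins those intersection cohomology groups down to ordinary de Rham cohomology. The genuine content of the theorem therefore lies in those two results (and ultimately in the decay of harmonic forms from Theorem~\ref{qfb.12}, the weighted $L^2$-cohomology computation of Corollary~\ref{wl2.35}, and the spectral-sequence and Mayer--Vietoris arguments behind Proposition~\ref{mms.14}); the present argument is the assembly.

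First I would dispose of the degrees $q\ne 4$. Since $\widetilde{g}_3$ is announced in \cite{FKS} to be a $\QFB$-metric for $(\bcM^0_3,\phi)$ and $\dim\tcM^0_3=8$, Hitchin's vanishing theorem \cite{Hitchin} gives $L^2\cH^q(\tcM^0_3,\widetilde{g}_3)=\{0\}$ for $q\ne 4$. On the other hand, the argument of Segal--Selby \cite[Lemma~1.4]{Segal-Selby} (see also \cite[\S~1.3]{Anderson}), used already in the proofs of Theorem~\ref{hs.12} and Proposition~\ref{mms.5}, provides a natural injection $\Im(H^q_c(\tcM^0_3)\to H^q(\tcM^0_3))\hookrightarrow L^2\cH^q(\tcM^0_3,\widetilde{g}_3)$ for every $q$; hence $\Im(H^q_c(\tcM^0_3)\to H^q(\tcM^0_3))=\{0\}$ for $q\ne 4$ as well, so \eqref{mms.30b} holds trivially outside middle degree.

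It remains to treat $q=4$. Since $\widetilde{g}_3$ is itself a $\QFB$-metric compatible with $(\bcM^0_3,\phi)$ by \cite{FKS}, Theorem~\ref{mms.6} applies directly with $g_{\QFB}=\widetilde{g}_3$ and yields
$$
    L^2\cH^4(\tcM^0_3,\widetilde{g}_3)\cong\Im\bigl(\IH^4_{\overline{\mathfrak{m}}}(\hcM^0_3)\to\IH^4_{\underline{\mathfrak{m}}}(\hcM^0_3)\bigr).
$$
Proposition~\ref{mms.14} asserts that the natural maps $H^4_c(\tcM^0_3)\to\IH^4_{\overline{\mathfrak{m}}}(\hcM^0_3)$ and $\IH^4_{\underline{\mathfrak{m}}}(\hcM^0_3)\to H^4(\tcM^0_3)$ are isomorphisms. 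Granting that the composite of canonical morphisms
$$
    H^4_c(\tcM^0_3)\to\IH^4_{\overline{\mathfrak{m}}}(\hcM^0_3)\to\IH^4_{\underline{\mathfrak{m}}}(\hcM^0_3)\to H^4(\tcM^0_3)
$$
is the usual map $H^4_c(\tcM^0_3)\to H^4(\tcM^0_3)$ (a compatibility implicit in \cite[Proposition~1]{HHM2004} and already invoked in the proof of Proposition~\ref{mms.14}), conjugating the middle map by the two outer isomorphisms identifies $\Im(\IH^4_{\overline{\mathfrak{m}}}(\hcM^0_3)\to\IH^4_{\underline{\mathfrak{m}}}(\hcM^0_3))$ with $\Im(H^4_c(\tcM^0_3)\to H^4(\tcM^0_3))$, which is \eqref{mms.30b} in degree $4$.

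The only points in this argument requiring real care are therefore bookkeeping ones: verifying that the composition of canonical maps relative cohomology $\to$ upper middle $\IH$ $\to$ lower middle $\IH$ $\to$ absolute cohomology is indeed the expected $H^*_c\to H^*$, so that passing to images commutes with the isomorphisms of Proposition~\ref{mms.14}; and confirming that the degrees $q\ne 4$ are handled purely by the Segal--Selby injectivity together with Hitchin's theorem, with no additional computation needed. Everything analytically substantial has already been absorbed into Theorem~\ref{mms.6}.
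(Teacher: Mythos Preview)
Your proof is correct and follows essentially the same route as the paper: invoke Hitchin for $q\ne 4$, then for $q=4$ combine Theorem~\ref{mms.6} with Proposition~\ref{mms.14} and the announced fact from \cite{FKS} that $\widetilde{g}_3$ is a $\QFB$-metric. Your write-up is slightly more explicit about the compatibility of the composite $H^4_c\to\IH^4_{\overline{\mathfrak{m}}}\to\IH^4_{\underline{\mathfrak{m}}}\to H^4$ with the usual map, and about why the right-hand side of \eqref{mms.30b} vanishes for $q\ne 4$, but these are just amplifications of what the paper leaves implicit.
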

\begin{proof}
By the work of Hitchin \cite{Hitchin}, the result holds for $q\ne4$.  For $q=4$, the isomorphism \eqref{mms.30b} follows from Theorem~\ref{mms.6} and Proposition~\ref{mms.14}, thanks to the result announced in \cite{FKS} that $\widetilde{g}_3$ is a $\QFB$-metric.   
\end{proof}

Unfortunately, the same approach does not seem to work to prove the Sen conjecture on $\tcM^0_4$.  As the next lemma indicates, the main problem is that on $\tcM^0_4$, one of the assumptions of Corollary~\ref{wl2.35} does not hold, compromising the use of this corollary to obtain an analogue of Proposition~\ref{mms.5} for $\tcM^0_4$.
\begin{lemma}
Fix $0<\epsilon <\frac12$.  If $a$ is a multiweight such that $a_i=\epsilon$ for all $i$, then the natural map
\begin{equation}
\WH^5_{\QFC}(\cN_3,\phi,\epsilon)\longrightarrow H^5(\cN_3)
\label{ce.1a}\end{equation}
is not injective.
\label{ce.1}\end{lemma}
\begin{proof}
By \eqref{mms.1b},
$$
              H^5(\tcN_3)\cong H^4(\tcM^0_3),
$$
while by  \cite{Segal-Selby}, $H^4(\tcM^0_3)$ has no $\bbZ_3$-invariant part, so
$$
    H^5(\cN_3)=\{0\}.
$$
Hence to show that \eqref{ce.1a} is not injective, it suffices to show that $\WH^q_{\QFC}(\cN_3,\phi,\epsilon)$ is non-trivial.  Applying the same idea as in the proof of Lemma~\ref{mms.2}, we have that
\begin{equation}
\begin{aligned}
\WH^5_{\QFC}(\tcN_3,\phi,\epsilon) &\cong \WH^5_{\QFC}(\bcM^0_3,\phi,\epsilon-\frac12) \oplus \WH^4_{\QFC}(\bcM^0_3,\phi,\epsilon+\frac12) \\
   &\cong  \WH^5_{\QFC}(\bcM^0_3,\phi,-\epsilon) \oplus \WH^4_{\QFC}(\bcM^0_3,\phi,\epsilon+\frac12) \\
   &\cong \IH^5_{\underline{\mathfrak{m}}}(\hcM^0_3)\oplus \WH^4_{\QFC}(\bcM^0_3,\phi,\epsilon+\frac12).
\end{aligned}
\label{ce.2}\end{equation}
Now, by \eqref{mms.26} and \eqref{mms.27}, notice that 
$$
     \IH_{\overline{\mathfrak{m}}}^3(\hU\cup \hV,\hU)\cong \bbC^3,
$$
so by \eqref{mms.22} and \eqref{mms.28}, 
$$
    \dim_{\bbC}\IH^3_{\overline{\mathfrak{m}}}(\hU\cup\hV)\ge 2.
$$
Hence, by \eqref{mms.17}, the fact that $\IH^3_{\overline{\mathfrak{m}}}(\hcM^0_3,\hU\cup\hV)=H^3_c(\tcM^0_3)=\{0\}$ by \cite{Segal-Selby} and the fact that the map 
$$
     \IH^4_{\overline{\mathfrak{m}}}(\hcM^0_3,\hU\cup\hV)\longrightarrow \IH^4_{\overline{\mathfrak{m}}}(\hcM^0_3)
$$
is injective by Proposition~\ref{mms.14}, we see that $\dim_{\bbC}\IH^3(\hcM^0_3)\ge 2$.  By duality, this means that $\dim_{\bbC}\IH^5_{\underline{\mathfrak{m}}}(\hcM^0_3)\ge 2$, which by \eqref{ce.2} implies that
$$
\dim_{\bbC}\WH^5_{\QFC}(\tcN_3,\phi,\epsilon) \ge 2.
$$
Since these non-trivial elements of $\WH^5_{\QFC}(\tcN_3,\phi,\epsilon)$  `come from' $ \IH_{\overline{\mathfrak{m}}}^3(\hU\cup \hV,\hU)$, they are automatically $\bbZ_3$-invariant, so we can conclude that
$$
\dim_{\bbC}\WH^5_{\QFC}(\cN_3,\phi,\epsilon) \ge 2.
$$
\end{proof}

\bibliography{QFBop}
\bibliographystyle{amsplain}

\end{document}